\newcommand{\vs}{\vskip5pt}
\newtheorem{thm}{Theorem}[section]
\newtheorem{defn}[thm]{Definition}
\newtheorem{prop}[thm]{Proposition}
\newtheorem{rem}[thm]{Remark}
\newtheorem{lem}[thm]{Lemma}
\newtheorem*{lem*}{Lemma}
\newtheorem*{thm*}{Theorem}
\newtheorem*{cor*}{Corollary}
\newtheorem*{rem*}{Remark}
\newtheorem{clm}[thm]{Claim}
\newcommand{\N}{\mathbb{N}}
\newcommand{\G}{\Gamma}
\title{Hyperfiniteness of boundary actions of relatively hyperbolic groups}
\date{}
\author{Chris Karpinski\footnote{McGill University. Email: christopher.karpinski@mail.mcgill.ca.}}
\begin{document}
	
	\maketitle

	\begin{abstract}
		We show that if $G$ is a finitely generated group hyperbolic relative to a finite collection of subgroups $\mathcal{P}$, then the natural action of $G$ on the geodesic boundary of the associated relative Cayley graph induces a hyperfinite equivalence relation. As a corollary of this, we obtain that the natural action of $G$ on its Bowditch boundary $\partial (G,\mathcal{P})$ also induces a hyperfinite equivalence relation. This strengthens a result of Ozawa obtained for $\mathcal{P}$ consisting of amenable subgroups and uses a recent work of Marquis and Sabok. 
	\end{abstract}


\section{Introduction}

	
	This paper studies equivalence relations induced by boundary actions of relatively hyperbolic groups. The study of boundary actions began with the work of Connes, Feldman and Weiss in \cite{connes_feldman_weiss_1981} and Vershik in \cite{Vershik78} who studied the actions of free groups on their boundaries. They showed that for a free group, its action on the Gromov boundary is $\mu$-hyperfinite for every Borel quasi-invariant probability measure $\mu$ on the boundary. Adams \cite{ADAMS1994765} later generalized this result to all hyperbolic groups.
	
	\vs
	
	Relatively hyperbolic groups were introduced by Gromov \cite{Grom}; see also the monograph of Osin \cite{Osin06}. Given a relatively hyperbolic group $G$ with a collection of parabolic subgroups $\mathcal{P}$ there is a natural boundary called the Bowditch boundary, denoted $\partial(G, \mathcal{P})$, which is a compact metrizable space on which $G$ acts naturally by homeomorphisms. 
	
	\vs
	
	In \cite{Ozawa}, Ozawa generalized the work of Adams \cite{ADAMS1994765} to the actions of relatively hyperbolic groups on their Bowditch boundary under the assumptions that the parabolic subgroups are exact. When the parabolic subgroups of $G$ in $\mathcal{P}$ are amenable, Ozawa \cite{Ozawa} proved that the action of $G$ on $\partial(G, \mathcal{P})$ is topologically amenable, and, more generally, when the parabolic subgroups are exact, Ozawa \cite{Ozawa} proved that the group $G$ is exact. Alternative proofs of the exactness of the group were given by Osin \cite{Osin02} who worked with parabolic subgroups with finite asymptotic dimension and by Dadarlat and Guentner \cite{DG} who worked with parabolic subgroups that are uniformly embeddable into a Hilbert space. 
	
	\vs
	
	In \cite{Zimmer78}, Zimmer introduced the notion of amenability of equivalence relations; see also the work of Connes, Feldman and Weiss \cite{connes_feldman_weiss_1981}. By \cite[Theorem 5.1]{Adams94}, a measurable action of a countable group $G$ on a standard probability space $(X, \mu)$ is $\mu$-amenable if and only if $\mu$-almost all stabilizers are amenable and the orbit equivalence relation is $\mu$-amenable. 
	
	\vs
	
	In this paper we generalize the result of Ozawa and work with relatively hyperbolic groups without any assumptions on the parabolic subgroups. In fact, we consider boundary actions from the Borel perspective. A countable Borel equivalence relation is called \emph{hyperfinite} if it is a countable increasing union of finite Borel sub-equivalence relations. Dougherty, Jackson and Kechris showed in \cite[Corollary 8.2]{DJK94} that the boundary action of any free group induces a hyperfinite orbit equivalence relation. The result of Dougherty, Jackson and Kechris was generalized to cubulated hyperbolic groups by Huang, Sabok and Shinko in \cite{HSS19}, and later to all hyperbolic groups by Marquis and Sabok in \cite{MS20}. In this paper, we prove the following:
	
	\newtheorem{theorem}{Theorem}
	\renewcommand*{\thetheorem}{\Alph{theorem}}
	\newtheorem{corollary}[theorem]{Corollary}
	\renewcommand*{\thecorollary}{\Alph{theorem}}
	
	\begin{theorem}
		\label{A}
		Let $G$ be a finitely generated group hyperbolic relative to a finite collection of subgroups $\mathcal{P}$ and let $\hat{\G}$ be the associated relative Cayley graph. Then the natural action of $G$ on the geodesic boundary $\partial \hat{\G}$ induces a hyperfinite orbit equivalence relation. 
	\end{theorem}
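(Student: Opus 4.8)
The plan is to follow the scheme of Marquis and Sabok \cite{MS20} for hyperbolic groups and to isolate and repair the one step where their argument uses properness of the underlying space. Let $S$ be a finite relative generating set, let $\G$ be the ordinary Cayley graph, and let $\hat{\G}$ be the coned-off (relative) Cayley graph, obtained from $\G$ by adjoining one cone vertex $v_{gP}$ for each left coset $gP$ with $P\in\mathcal{P}$, joined by an edge to every element of that coset. By relative hyperbolicity $\hat{\G}$ is $\delta$-hyperbolic, and $G$ acts on it by isometries cocompactly (finitely many orbits of vertices) but \emph{not} properly: the only vertices of infinite degree are the cone vertices, whose degree is $|P|$. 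I would first equip $\partial\hat{\G}$ with the standard Borel structure coming from its visual topology, check that the $G$-action is Borel, and note that since $G$ is countable the orbit equivalence relation $E_G$ is a countable Borel equivalence relation. It then suffices to exhibit $E_G$ as an increasing union $E_G=\bigcup_n E_n$ of finite Borel sub-equivalence relations.

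The core of the argument is to encode a point $\xi\in\partial\hat{\G}$ by geodesic rays from the basepoint $e$ to $\xi$ and to convert orbit equivalence into an eventual-coincidence (tail) relation on rays, exactly as for free groups in \cite{DJK94} and for hyperbolic groups in \cite{MS20}. The key geometric observation is that if $\eta=g\xi$, then the rays $[e,\eta)$ and $g\cdot[e,\xi)=[ge,\eta)$ both converge to $\eta$ and hence eventually $2\delta$-fellow travel, so that orbit equivalence is captured by tail equivalence of rays modulo the $G$-action on bounded initial segments. In the proper setting this is organized into finite approximating relations $E_n$ whose \emph{finiteness} rests squarely on finiteness of balls in the space.

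The main obstacle is precisely that $\hat{\G}$ is not proper: balls are infinite, so the finiteness of the $E_n$ does not survive verbatim. I would repair this using the relatively hyperbolic geometry. A geodesic ray in $\hat{\G}$ alternates finite-degree segments lying in $\G$ with passages through cone vertices, and the only infinite branching occurs at a cone vertex, in the choice of the group element through which the ray exits a given coset. The decisive input is the bounded coset penetration property: two geodesics converging to the same boundary point $\xi$ penetrate the same parabolic cosets, entering and exiting at points that are uniformly close in the \emph{proper} metric of $\G$. Thus, for a fixed boundary point, the parabolic penetration data are coarsely determined, which collapses the infinite branching at cone vertices and restores finite branching at each scale once one measures with $\G$-distance rather than $\hat{\G}$-distance. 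This is exactly what is needed to make the Marquis--Sabok approximating relations $E_n$ finite again.

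With finiteness recovered, I would assemble the proof by running the Marquis--Sabok construction: the tail structure on rays, now finite-branching at every scale, yields finite Borel equivalence relations $E_n$ increasing to $E_G$, giving hyperfiniteness. The residual work is descriptive-set-theoretic bookkeeping: a Borel selection of geodesic rays and of the cone vertices they traverse, a Borel record of the (coarse) penetration data, and a check that the fellow-traveling and bounded-coset-penetration estimates are uniform, so that the $E_n$ are genuinely finite and Borel. I expect the bounded-coset-penetration step to be the hard part, since it is where non-properness must be defeated; once finite branching is secured, the remainder is a faithful transcription of the hyperbolic case.
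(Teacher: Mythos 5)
Your overall strategy is the paper's: run the Marquis--Sabok machinery on $\hat{\G}$ and repair the single point where properness of the space is used. You have correctly located that point --- one needs a finiteness statement that substitutes for finiteness of balls --- but the tool you propose there differs from the paper's, and it is exactly the step you leave unexecuted. The paper never invokes the bounded coset penetration property. Instead it uses Osin's theorem that, for a suitable finite generating set $X$, geodesic triangles in $\hat{\G}$ are $\nu$-slim \emph{with respect to the word metric $d_X$} (\cite[Theorem 3.26]{Osin06}, quoted as Theorem \ref{3.0}). From this the key finiteness statement (Theorem \ref{3.1}) falls out in a few lines: for any $\gamma,\gamma_0\in\mathrm{CGR}(x,\eta)$ and any $i$, the vertex $\gamma(i)$ lies within $d_X$-distance $\nu$ of one of at most $6\nu+1$ vertices of $\gamma_0$, so $\lvert\{\gamma(i):\gamma\in\mathrm{CGR}(x,\eta)\}\rvert\le(6\nu+1)\lvert B^\nu_X(1)\rvert$ uniformly. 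This says precisely that $\mathrm{Geo}(x,\eta)$ is uniformly locally finite, after which \cite[Theorem 5.9]{MS20} applies as a black box to give $\lvert\mathrm{Geo}_1(x,\eta)\triangle\mathrm{Geo}_1(y,\eta)\rvert<\infty$, and Section \ref{4} is descriptive-set-theoretic bookkeeping. Your BCP route would have to do more work to reach the same place: BCP as usually stated concerns pairs of paths with common (or close) endpoints, so you would first need a version for asymptotic rays, and even then BCP only controls the entry and exit vertices of penetrated cosets, whereas what is needed is control of \emph{every} vertex $\gamma(i)$; your summary of BCP ("penetrate the same cosets, entering and exiting at uniformly close points") is also not quite the correct statement, since one ray may penetrate a coset the other skips. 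So the $d_X$-slim-triangles theorem is the cleaner input, and you should be aware that this is where the paper gets its leverage.

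Two smaller caveats. First, the endgame is not literally "exhibit $E_G=\bigcup_n E_n$ with $E_n$ finite": the paper (following \cite{MS20}) splits the boundary into a piece $Z$ on which $E$ is smooth and a piece $Y$ on which one builds a Borel homomorphism to $E_1$ landing in a countable hypersmooth relation; hyperfiniteness then comes from \cite[Theorem 8.1.5]{Gao}. Since you defer to \cite{MS20} this is presumably what you intend, but "faithful transcription" undersells a few genuine changes: the generating set $X\cup\mathcal{H}$ is infinite, so geodesic labels must be recoded into finite binary strings before the $<_n$-minimal-label argument makes sense; the set of all CGRs is closed but no longer compact; and the relation between convergence in $\partial\hat{\G}$ and convergence of rays (Proposition \ref{4.1}) needs the same branching bound again. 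None of these is fatal, but each requires the uniform local finiteness of $\mathrm{Geo}(x,\eta)$, which is why everything hinges on getting that one lemma right.
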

	
		\begin{corollary}
		\label{B}
		Let $G$ be a finitely generated group hyperbolic relative to a finite collection of subgroups $\mathcal{P}$. Then the natural action of $G$ on the Bowditch boundary $\partial (G,\mathcal{P})$ induces a hyperfinite orbit equivalence relation. 
	\end{corollary}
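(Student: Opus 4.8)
The plan is to derive the corollary from Theorem~\ref{A} by comparing the two boundaries $\partial\hat{\G}$ and $\partial(G,\mathcal{P})$ as $G$-spaces. Since $G$ is countable and acts on the compact metrizable space $\partial(G,\mathcal{P})$ by homeomorphisms, the induced orbit equivalence relation $E$ is a countable Borel equivalence relation on a standard Borel space; the same holds for the relation $F$ on $\partial\hat{\G}$. I will use three standard facts about countable Borel equivalence relations: (i) any countable Borel equivalence relation on a countable standard Borel space is smooth, hence hyperfinite; (ii) if a countable Borel equivalence relation Borel reduces to a hyperfinite one, then it is itself hyperfinite; and (iii) if $X = A \sqcup B$ is a partition into invariant Borel sets, then $E$ is hyperfinite if and only if both $E|_A$ and $E|_B$ are hyperfinite.

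First I would split the Bowditch boundary $G$-invariantly into its two types of points, writing $\partial(G,\mathcal{P}) = \partial_{c} \sqcup \partial_{p}$, where $\partial_{c}$ is the set of conical limit points and $\partial_{p}$ is the set of bounded parabolic points. Both sets are $G$-invariant, and one checks that they are Borel. The set $\partial_{p}$ is in $G$-equivariant bijection with the collection of maximal parabolic subgroups of $G$; these are precisely the $G$-conjugates of the members of $\mathcal{P}$, and since $G$ is countable and $\mathcal{P}$ finite there are only countably many of them. Hence $\partial_{p}$ is countable, and by fact (i) the relation $E|_{\partial_{p}}$ is hyperfinite.

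The main point is to handle the conical part. Coning off each peripheral coset to a single vertex turns every geodesic ray that eventually enters a fixed horoball into a ray terminating at a finite cone vertex, so that the only directions surviving to infinity in $\hat{\G}$ are the non-peripheral ones. Concretely, I would construct a $G$-equivariant Borel injection $\pi : \partial_{c} \to \partial\hat{\G}$ by sending a conical limit point, represented by a geodesic ray in the Groves--Manning cusped space (equivalently, by a conically converging sequence in $G$), to the endpoint of its image ray in the relative Cayley graph $\hat{\G}$. This realises the standard identification of $\partial\hat{\G}$ with the conical part of the Bowditch boundary; being a $G$-equivariant Borel injection, $\pi$ is a Borel reduction of $E|_{\partial_{c}}$ to $F$. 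Since $F$ is hyperfinite by Theorem~\ref{A}, fact (ii) gives that $E|_{\partial_{c}}$ is hyperfinite.

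Finally, applying fact (iii) to the invariant partition $\partial(G,\mathcal{P}) = \partial_{c} \sqcup \partial_{p}$ shows that $E$ is hyperfinite, proving the corollary. The principal obstacle is the construction and verification of the identification $\pi$: one must show that it is well defined (independent of the chosen geodesic representative), genuinely lands in $\partial\hat{\G}$, is $G$-equivariant, and---most importantly for the Borel category---is Borel between the standard Borel structures coming from the two distinct boundary constructions. Establishing the Borelness of the conical/parabolic decomposition and of $\pi$, by relating the cusped space to the coned-off Cayley graph, is where the geometric content of the argument lies; everything else reduces to the bookkeeping facts (i)--(iii).
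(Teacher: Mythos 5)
Your proposal is correct and follows essentially the same route as the paper: the decomposition of $\partial(G,\mathcal{P})$ into the countable set of bounded parabolic points and the conical part identified $G$-equivariantly with $\partial\hat{\G}$ is exactly the content of Theorem~\ref{2.bb}, which the paper obtains by citing Dahmani and Bowditch rather than constructing the identification by hand via the cusped space. The remaining bookkeeping (smoothness on the countable piece, transport of hyperfiniteness along the identification, and gluing over the invariant partition) matches the paper's proof.
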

	
	Corollary \ref{B} in particular strengthens the result of Ozawa \cite{Ozawa} in case the parabolic subgroups are amenable. Indeed, hyperfiniteness implies $\mu$-amenability for every invariant Borel probability measure $\mu$ and by \cite[Theorem 3.3.7]{ADR}, an action of a countable group on a locally compact space by homeomorphisms is topologically amenable if and only if it is $\mu$-amenable for every invariant Borel probability measure $\mu$.
	
	\vs
 	
	We proceed by following a similar approach to \cite{HSS19} and \cite{MS20}, studying \textit{geodesic ray bundles} $\text{Geo}(x, \eta)$ in relative Cayley graphs (Definition \ref{2.1}). For the case of a cubulating hyperbolic group $G$ studied in \cite{HSS19}, the crucial property from which the hyperfinitess of the boundary action of $G$ follows is the finite symmetric difference of geodesic ray bundles: for any $x,y \in G$ and any $\eta \in \partial G$, $\text{Geo}(x, \eta) \triangle \text{Geo}(y, \eta)$ is finite (see \cite[Theorem 1.4]{HSS19}). In \cite{Touikan2018OnGR}, Touikan showed that this symmetric difference need not be finite in Cayley graphs of general hyperbolic groups, although in \cite{Marquis_2018}, Marquis provides many examples of groups acting geometrically on locally finite hyperbolic graphs where this finite symmetric difference property does hold. In \cite{MS20}, Marquis and Sabok define a modified version of the geodesic ray bundle, denoted $\text{Geo}_1(x, \eta)$ for $x \in G$ and $\eta \in \partial G$ (see \cite[Definition 5.5]{MS20} and Definition \ref{2.7} in our paper) and show (\cite[Theorem 5.9]{MS20}) that these modified geodesic ray bundles satisfy a finite symmetric difference property: $\vert \text{Geo}_1(x, \eta) \triangle \text{Geo}_1(y, \eta) \vert < \infty$ for each $x, y \in G$ and for each $\eta \in \partial G$. Marquis and Sabok then deduce hyperfiniteness of the boundary action as a consequence of this finite symmetric difference property of the modified bundles (see \cite[Section 6]{MS20}). 
	
	\vs 
	
	Local finiteness of the Cayley graph plays a crucial role in establishing the finite symmetric difference property of the $\text{Geo}_1$ bundles in \cite{MS20}. However, relative Cayley graphs of relatively hyperbolic groups are not locally finite. To make up for this loss of local finiteness, we rely on finiteness results about relative Cayley graphs of relatively hyperbolic groups from \cite{Osin06} (namely, \cite[Theorem 3.26]{Osin06}).

	\vs
	
	We note also that the hyperfiniteness of boundary actions has been studied beyond relatively hyperbolic groups. Przytycki and Sabok have recently established the hyperfiniteness of the actions of a mapping class group of an oriented surface of finite type on the boundaries of the arc graph (\cite[Theorem 1.1]{przytycki_sabok_2021}) and the curve graph (\cite[Corollary 1.2]{przytycki_sabok_2021}) of the surface.
	

	
	\vs
	
	\textbf{Acknowledgement}: I owe great thanks to my advisor Marcin Sabok for his continuous support, patience and guidance throughout the production of this work. 
	
	\section{Preliminaries}
	\label{2}
	
	In this paper, for a hyperbolic metric space $X$, $\partial X$ will denote the geodesic boundary of $X$. We will also denote $C_{hb}(X)$ the \emph{horoboundary} of $X$ (see \cite[Section 2.4]{MS20} for a definition of the horoboundary). 
	
	\subsection{Relatively hyperbolic groups}
	\label{2.2}
	
	Relatively hyperbolic groups were first introduced by Gromov in his seminal paper \cite{Grom} as a generalization of hyperbolic groups. The following definitions can be found in \cite{Osin06}.
	
	\vs
	
	Let $G$ be a group generated by a finite set $X$,  let $\mathcal{P} = \{H_1,...,H_n\}$ be a collection of subgroups of $G$ and let $\mathcal{H} = \bigcup \mathcal{P}$. The \textbf{relative Cayley graph} associated to $X$ and $\mathcal{P}$ is the Cayley graph $\hat{\G}$ with respect to the generating set $X \cup \mathcal{H}$. This graph can  be identified with the \textbf{coned-off Cayley graph} obtained by starting with the Cayley graph $\G$ of $G$ with respect to $X$, adjoining to $\G$ a vertex $v_{gH_i}$ for each left coset $gH_i$ and connecting each vertex of $gH_i$ in $\G$ to $v_{gH_i}$ by an edge of length $\frac12$. The notation $d_X$ and $d$ refer to the word metrics with respect to the generating sets $X$ and $X \cup \mathcal{H}$, respectively. We will use the notation $B_r^X(x)$ to denote the closed ball of radius $r$  in the metric $d_X$ about the point $x \in G$.
	
	\vs
	
	A finitely generated group $G$ is \textbf{hyperbolic relative to} a collection of subgroups $\mathcal{P} = \{H_1,...,H_n\}$ if there exists a finite generating set $X$ of $G$ such that the associated relative Cayley graph is hyperbolic and satisfies the bounded coset penetration property (BCP) (see \cite[Definition 6.5]{Osin06} for the definition of the BCP; we will not need to use the definition of BCP, so we do not define it here). Relative hyperbolicity is invariant under change of finite generating set by \cite[Proposition 2.8]{Osin06}.

	\vs 
	
	
	For a finitely generated group $G$ hyperbolic relative to a finite collection $\mathcal{P}$ of subgroups, there is a natural compact metrizable space on which $G$ acts naturally by homeomorphisms, denoted $\partial (G,\mathcal{P})$ and called the \textbf{Bowditch boundary} (see \cite[Section 4]{Bow12} for the construction of the Bowditch boundary). The following theorem is the main ingredient in establishing Corollary \ref{B} as a result of Theorem \ref{A}. 
		
		\begin{thm}
			\label{2.bb}
			Let $G$ be hyperbolic relative to a finite collection of subgroups $\mathcal{P}$, with relative Cayley graph $\hat{\G}$. Then $\partial \hat{\G}$ embeds $G$-equivariantly and homeomorphically into $\partial (G, \mathcal{P})$ with countable complement. 
		\end{thm}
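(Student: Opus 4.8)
The plan is to by-pass any direct comparison of geometries and instead realize $\partial(G,\mathcal{P})$ through Bowditch's own construction, inside which $\partial\hat{\G}$ appears verbatim as the non-parabolic part. Recall that in the fine-graph formulation a group is hyperbolic relative to $\mathcal{P}$ precisely when it acts on a connected, fine, hyperbolic graph $K$ with finitely many orbits of edges, finite edge-stabilizers, and with $\mathcal{P}$ representing the conjugacy classes of infinite-valence vertex-stabilizers; the boundary is then $\Delta K = \partial K \sqcup V_\infty(K)$, where $\partial K$ is the Gromov boundary of $K$ and $V_\infty(K)$ the set of infinite-valence vertices (the parabolic points), with Bowditch's topology, and Bowditch shows this space is $G$-equivariantly independent of the choice of $K$ (see \cite{Bow12}). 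The whole strategy is to apply this with $K = \hat{\G}$. (As a robustness check one could instead realize $\partial(G,\mathcal{P})$ as the boundary of the Groves--Manning cusped space and analyse the collapse map onto $\hat{\G}$, but the fine-graph route keeps everything combinatorial.)

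First I would verify that $\hat{\G}$ itself is an admissible graph $K$. Connectedness and hyperbolicity are immediate (hyperbolicity is part of the hypothesis). The $G$-action has finitely many orbits of edges --- the $X$-edges split into finitely many orbits since $X$ is finite, and the cone-edges into $n$ orbits, one per $H_i$ --- and all edge-stabilizers are trivial since $G$ acts freely on the group-element vertices. The infinite-valence vertices are exactly the cone vertices $v_{gH_i}$ with $H_i$ infinite, and the stabilizer of $v_{gH_i}$ is the conjugate $gH_ig^{-1}$, so $\mathcal{P}$ indeed represents the conjugacy classes of infinite vertex-stabilizers. The one genuinely nontrivial property is \emph{fineness} of $\hat{\G}$, and this is where relative hyperbolicity in Osin's (BCP) sense must enter: here I would invoke the equivalence between the BCP formulation and Bowditch's fine-graph formulation (\cite{Osin06}, \cite{Bow12}) to conclude that $\hat{\G}$ is fine.

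With $\hat{\G}$ identified as a defining graph, Bowditch's construction yields a $G$-equivariant homeomorphism $\partial(G,\mathcal{P}) \cong \Delta\hat{\G} = \partial\hat{\G} \sqcup V_\infty(\hat{\G})$. The inclusion $\partial\hat{\G} \hookrightarrow \Delta\hat{\G}$ is $G$-equivariant by construction, and its complement $V_\infty(\hat{\G}) = \{v_{gH_i} : H_i \text{ infinite}\}$ is in bijection with the set of peripheral cosets, hence countable since $G$ is countable and $\mathcal{P}$ is finite. This already gives equivariance and the countable-complement assertion; what remains is to confirm that the inclusion is a \emph{topological} embedding, i.e.\ that the subspace topology induced on $\partial\hat{\G}$ from Bowditch's topology on $\Delta\hat{\G}$ coincides with the intrinsic boundary topology of $\hat{\G}$ used in the statement.

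I expect this last point to be the main obstacle, precisely because $\hat{\G}$ is not locally finite. One must check both that the geodesic boundary $\partial\hat{\G}$ (the object named in the statement) coincides with the sequential Gromov boundary entering Bowditch's construction, and that the two candidate topologies agree. Concretely, I would show that a sequence in $\partial\hat{\G}$ converges to $\xi \in \partial\hat{\G}$ in Bowditch's topology if and only if the associated Gromov products based at $1$ tend to infinity, using hyperbolicity of $\hat{\G}$ together with the finiteness results on geodesics in relative Cayley graphs (\cite[Theorem 3.26]{Osin06}) to control how geodesics and convergent sequences enter and leave neighbourhoods of the cone vertices; the non-properness manifests itself only at those cone vertices, which are exactly the parabolic points being removed. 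Once the topologies are matched, $\partial\hat{\G}$ embeds $G$-equivariantly and homeomorphically into $\partial(G,\mathcal{P})$ with countable complement $V_\infty(\hat{\G})$, as claimed.
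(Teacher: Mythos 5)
Your proposal follows essentially the same route as the paper: the paper cites Dahmani's appendix for the fact that the coned-off Cayley graph $\hat{\G}$ is an admissible fine hyperbolic graph witnessing Bowditch's Definition 2 of relative hyperbolicity, and then invokes Bowditch's Propositions 8.5 and 9.1 for the equivariant topological embedding and the countable complement --- precisely the two steps you verify by hand and flag as the remaining work. The argument is correct; the only difference is that the paper delegates both the fineness check and the topology-matching to the cited references rather than carrying them out.
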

	
		\begin{proof}
			
			In \cite[Proposition 1, Section A.2]{Dah03}, it is shown that the coned-off Cayley graph $\hat{\G}$ witnesses the relative hyperbolicity of $G$ with respect to $\mathcal{P}$ according to Definition 2 of relative hyperbolicity from \cite{Bow12}. Therefore, by \cite[Proposition 8.5]{Bow12} and \cite[Proposition 9.1]{Bow12}, $\partial \hat{\G}$ embeds $G$-equivariantly and homeomorphically into $\partial (G, \mathcal{P})$ and $\partial \hat{\G}$ has countable complement in $\partial (G, \mathcal{P})$. 
			
		\end{proof}
	
	\subsection{Combinatorial Geodesic Ray Bundles}
	
	Let $X$ be a hyperbolic graph equipped with its natural combinatorial metric (assigning edges length 1), and denote the vertex set of $X$ by $X^{(0)}$. We present some definitions and terminology used in \cite{MS20} that we will use in our paper. We refer the reader to Sections 3 and 4 of \cite{MS20} for a further study of the objects we define in this section. 
	
	\begin{defn}
		\label{2.1}
	For $x \in X^{(0)}$ and $\eta \in \partial X$, define $CGR(x, \eta)$ to be the set of all combinatorial geodesic rays (CGRs) based at $x$ and  define the \textbf{combinatorial geodesic ray bundle} $\text{Geo}(x, \eta) = \bigcup \text{CGR}(x, \eta)$ to be the set of all vertices on CGRs in $\text{CGR}(x, \eta)$.
	\end{defn}

	By \cite[Lemma 3.2]{MS20}, every CGR $\gamma = (x_n)_n$ converges to some $\xi \in C_{hb}(\hat{\G})$. We denote such limit $\xi = \xi_{\gamma}$. 
	
	\begin{defn}
		
		Fixing a basepoint $z \in X^{(0)}$, for $\eta \in \partial \hat{\G}$ define the \textbf{limit set} $\Xi(\eta) = \{\xi_{\gamma} : \gamma \in \text{CGR}(z, \eta)\}$. 
	\end{defn}
	
	By \cite[Lemma 3.1]{MS20} (which says that we can move the basepoint of any geodesic ray to any other basepoint to obtain a geodesic with the same tail), the definition of $\Xi(\eta)$ is independent of the basepoint (i.e. for any $z_1, z_2 \in X^{(0)}$ and $\xi \in \Xi(\eta)$, we have $\xi = \xi_{\gamma}$ for some $\gamma \in \text{CGR}(z_1, \eta)$ if and only if $\xi = \xi_{\gamma'}$ for some $\gamma' \in \text{CGR}(z_2, \eta)$). 

	\begin{defn}
			\label{2.4}
		For $x \in X^{(0)}, \eta \in \partial X$ and $\xi \in \Xi(\eta)$, define the \textbf{combinatorial sector} $Q(x, \xi) = \{y \in X^{(0)} : y \in \gamma \text{ for some } \gamma \in \text{CGR}(x, \eta) \text{ with } \xi_{\gamma} = \xi\}$. 
	\end{defn}
	
	\begin{defn}
		\label{2.5}
		For $\eta \in \partial X$, a vertex $x \in X^{(0)}$ is $\mathbf{\eta}$\textbf{-special} if $\bigcap_{\xi \in \Xi(\eta)}Q(x, \xi)$ contains a CGR $\gamma$. The set of all $\mathbf{\eta}$\textbf{-special} vertices is denoted $X_{s,\eta}$.
	\end{defn}

	By \cite[Lemma 4.7]{MS20}, if $x \in X_{s,\eta}$, then there exists a unique $\xi' \in \Xi(\eta)$ such that $\bigcap_{\xi \in \Xi(\eta)}Q(x, \xi) = Q(x, \xi')$. We denote such $\xi'$ by $\xi' = \xi_{x, \eta}$. 

	\vs

	Our main objects of interest will be the following modified geodesic ray bundles, first defined in \cite[Definition 5.5]{MS20}. 
	
	\begin{defn}
			\label{2.7}
		Let $x \in X^{(0)}$ and $\eta \in \partial X$. For $\xi \in \Xi(\eta)$, let $Y(x, \xi)$ be the set of $\eta$-special vertices $y \in \text{Geo}(x, \eta)$ with $\xi_{y, \eta} = \xi$ at minimal distance to $x$. Put
		
		\begin{equation*}
			\text{Geo}_1(x, \eta) = \bigcup_{\xi \in \Xi(\eta)} \bigcup_{y \in Y(x, \xi)} Q(y, \xi)
		\end{equation*}
	\end{defn}

	
	\section{Geodesic Ray Bundles in Relatively Hyperbolic Groups}
	\label{3}
	In this section, we examine modified geodesic ray bundles in the relative Cayley graph $\hat{\G}$ and prove that these modified bundles have finite symmetric difference for a fixed boundary point. This section generalizes \cite[Theorem 5.9]{MS20}. 
	
	\vs 

	We begin by showing that $\vert \{\gamma(i) : \gamma \in \text{CGR}(x, \eta)\} \vert$ is uniformly bounded for each $i$, each $x \in G$ and each $\eta \in \partial \hat{\G}$, which is a well-known property in any uniformly locally finite hyperbolic graph.
	
	\vs 
	
	We will make use of the following result, which states that geodesic triangles in the relative Cayley graph are slim with respect to the metric $d_X$ for some finite generating set $X$.
	
	\begin{thm}
		\label{3.0}
		Let $G$ be a finitely generated group hyperbolic relative to a collection of subgroups $\{H_1,...,H_n\}$. There exists a finite generating set $X$ of $G$ such that the following holds. There exists a constant $\nu$ such that for any geodesic triangle $pqr$ in the relative Cayley graph $\hat{\G}$ and any vertex $u$ on $p$, there exists a vertex $v$ on $q \cup r$ such that $d_X(u,v) \leq \nu$. 
	\end{thm}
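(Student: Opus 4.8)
The plan is to promote the ordinary $d$-slimness of geodesic triangles in the hyperbolic graph $\hat{\G}$ to the stated $d_X$-slimness, the gap between the two being precisely the parabolic cosets, which have bounded $d$-diameter but arbitrarily large $d_X$-diameter (any two elements of a common coset $gH_i$ are joined by a single edge of $\hat{\G}$, hence lie at $d$-distance $1$ but possibly at huge $d_X$-distance). Two preliminary observations will be used throughout. First, a geodesic of $\hat{\G}$ is a $(1,0)$-quasi-geodesic without backtracking, since a shortest path can neither re-enter a coset $gH_i$ twice nor traverse two consecutive $H_i$-edges (both would produce a strict shortcut through a single edge). Second, and consequently, every maximal subpath of a geodesic lying in one coset is a single edge, so a geodesic has no interior vertices of components; that is, \emph{every} vertex of a geodesic is a phase vertex. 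The essential external input is the quantitative bounded coset penetration comparison, \cite[Theorem 3.26]{Osin06}: after fixing the finite generating set $X$ for which it is stated, there is a constant $\varepsilon$ such that two non-backtracking quasi-geodesics with the same endpoints and controlled constants penetrate the same cosets up to $d_X$-bounded entry/exit data, and in particular each phase vertex of one lies within $d_X$-distance $\varepsilon$ of the other.

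First I would dispatch the bigon case: if $p,p'$ are geodesics of $\hat{\G}$ with the same endpoints, then every vertex of $p$ lies within a uniform $d_X$-distance of $p'$. This is immediate from the two observations above, applying \cite[Theorem 3.26]{Osin06} to the pair $(p,p')$ with $\lambda=1$, $c=0$: all vertices of $p$ are phase vertices, hence $d_X$-within $\varepsilon$ of $p'$.

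Next I would reduce the triangle case to the bigon case through the internal-point decomposition of a $\delta$-thin triangle $pqr$ with vertices $A,B,C$ and $p=[B,C]$. Writing $c_A\in p$, $c_B\in q$, $c_C\in r$ for the internal points, these are pairwise $d$-close (bounded insize), and $p$ splits at $c_A$ into $[B,c_A]$ and $[c_A,C]$, which $d$-fellow-travel the initial segments of $r$ and $q$ issuing from $B$ and $C$ respectively. For $u\in[B,c_A]$ I would compare $[B,c_A]$ with the path formed by the initial segment of $r$ from $B$ followed by a geodesic bridge of $d$-length $O(\delta)$ to $c_A$: this is a non-backtracking quasi-geodesic with the same endpoints as $[B,c_A]$ and constants depending only on $\delta$, so \cite[Theorem 3.26]{Osin06} places $u$ within $d_X$-distance $\varepsilon'$ of it, and when the matching vertex lies on $r$ we are done. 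The vertices $u\in[c_A,C]$ are handled symmetrically against $q$.

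The main obstacle is exactly the central region: a vertex $u$ near $c_A\approx c_B\approx c_C$ is $d$-close to all three sides, but if a parabolic coset $gH_i$ of large $d_X$-diameter passes through the center, then $u$ may be $d_X$-far from any naive nearby vertex, and the matching vertex produced above may land on the short bridge rather than on $q\cup r$. This is resolved by the coset-penetration half of \cite[Theorem 3.26]{Osin06}: a coset deeply ($d_X$-largely) penetrated by one side near the center must be penetrated by the adjacent side as well, with entry and exit vertices agreeing up to $d_X$-distance $\varepsilon$, so matching $u$ to such a shared entry/exit vertex lying on $q\cup r$ yields the required bound. Assembling the bigon constant, the insize bound, the bridge quasi-geodesic constants, and the penetration constant $\varepsilon$ produces a single $\nu$ valid for all triangles and all $u$.
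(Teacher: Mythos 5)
Your overall strategy --- upgrading the $d$-thinness of triangles in $\hat{\G}$ to $d_X$-thinness by comparing one side against the adjacent side plus a short central bridge, via Osin's quasi-geodesic comparison theorem --- is a reasonable one, and your preliminary observations are correct: geodesics of $\hat{\G}$ cannot have two components in one coset nor components longer than a single edge, so they are non-backtracking $(1,0)$-quasi-geodesics all of whose vertices are phase vertices, and the bigon case then follows immediately from \cite[Theorem 3.26]{Osin06}. Be aware, though, that the paper does not argue this way at all: its proof is a pointer, recording that $X$ is the generating set built in the proof of \cite[Lemma 3.1]{Osin06} and that the $d_X$-thin-triangle property is established \emph{inside} the proof of \cite[Theorem 3.26]{Osin06}. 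You are therefore re-deriving an ingredient of Osin's argument from the statement of his theorem, which is legitimate but places the full burden on your reduction from triangles to bigons.

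That reduction has a genuine gap at exactly the point you call the main obstacle. A minor issue first: the concatenation of $r|_{[B,c_C]}$ with the bridge need not be non-backtracking (the bridge may re-enter a coset already penetrated by the side), so \cite[Theorem 3.26]{Osin06} does not apply to it as written; this is repairable by the standard surgery of shortcutting repeated cosets, which only deletes vertices of the comparison path. The serious issue is that the coset-penetration clauses of Theorem 3.26 compare two paths \emph{with the same endpoints}. Applied to $[B,c_A]$ versus $r|_{[B,c_C]}$ followed by the bridge $\beta$, they tell you that a coset deeply penetrated by $\beta$ must also be penetrated, with $d_X$-close entry and exit vertices, by $[B,c_A]\subseteq p$ --- that is, they return you to the side $p$ you started from, not to $q\cup r$. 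They assert nothing about two \emph{sides} of the triangle penetrating the same cosets, since two sides do not share both endpoints, so the promised ``shared entry/exit vertex lying on $q\cup r$'' is not produced by the cited result. Concretely, if $u\in p$ is matched to an interior phase vertex $v$ of $\beta$, the subpath of $\beta$ from $v$ to either endpoint may traverse several $\mathcal{H}$-edges of large $d_X$-length, and $u$ itself need not be an entry or exit vertex of any deeply penetrated coset; your sketch bounds none of these quantities. Controlling the total $d_X$-length of such components along the short central cycle is precisely the content of Osin's machinery bounding the lengths of isolated components of cycles, and without invoking or reproducing it the central case remains unproved.
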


	\begin{proof}
		The finite generating set $X$ is constructed in the proof of  \cite[Lemma 3.1]{Osin06} and it is shown in the proof of \cite[Theorem 3.26]{Osin06} that $X$ satisfies the stated property.
	\end{proof}
	

	Here is the main result of this section. 
	
	\begin{thm}
		\label{3.1}
		Let $G$ be a finitely generated group hyperbolic relative to a collection of subgroups $\{H_1,...,H_n\}$. There exists a finite generating set $X$ of $G$ such that the following holds. Let $\hat{\G}$ be the associated relative Cayley graph. Then there is a constant $B$ such that for any $x \in G$, any $\eta \in \partial \hat{\G}$, and each $i \in \N$, we have $$\vert \{\gamma(i) : \gamma \in \text{CGR}(x, \eta)\} \vert \leq B$$ 
		
	\end{thm}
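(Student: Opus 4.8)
The plan is to work with the finite generating set $X$ furnished by Theorem \ref{3.0} and to bound the cardinality of $S_i := \{\gamma(i) : \gamma \in \text{CGR}(x,\eta)\}$ by covering it with a controlled number of balls in the word metric $d_X$. The guiding observation is that although $\hat{\G}$ is not locally finite for the relative metric $d$ (a whole coset has $d$-diameter one, so $d$-balls are infinite and a bound on $d$-distances carries no cardinality information), the Cayley graph of $G$ with respect to the finite set $X$ \emph{is} locally finite: every ball $B_r^X(y)$ is finite, and by left-invariance of $d_X$ we have $|B_r^X(y)| = |B_r^X(e)|$. Theorem \ref{3.0} is precisely the device that upgrades the $d$-fellow-travelling of geodesics to $d_X$-control.

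If $\text{CGR}(x,\eta) = \emptyset$ there is nothing to prove, so I would fix a reference ray $\gamma_0 \in \text{CGR}(x,\eta)$ and record two standard facts about the $\delta$-hyperbolic graph $(\hat{\G}, d)$, where $\delta$ is its hyperbolicity constant. First, since $X \subseteq X \cup \mathcal{H}$ we have $d \le d_X$. Second, since $\gamma$ and $\gamma_0$ are geodesic rays issuing from the common vertex $x$ and converging to the same $\eta$, they are asymptotic and hence fellow-travel, so $d(\gamma_0(N), \gamma(N)) \le D$ for every $N$, where $D = D(\delta)$ is a constant depending only on $\delta$.

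The core step is a slimness argument. Fix $i$ and an arbitrary $\gamma \in \text{CGR}(x,\eta)$, and choose $N > i + D + \nu$. Consider the geodesic triangle in $\hat{\G}$ with sides $\gamma|_{[0,N]}$, $\gamma_0|_{[0,N]}$, and a $d$-geodesic $\sigma$ joining $\gamma(N)$ to $\gamma_0(N)$, whose $d$-length is at most $D$. Applying Theorem \ref{3.0} to the vertex $u = \gamma(i)$ on the first side produces a vertex $v$ on $\gamma_0|_{[0,N]} \cup \sigma$ with $d_X(u,v) \le \nu$, and hence $d(u,v) \le \nu$. Any point of $\sigma$ has $d$-distance at least $N - D$ from $x$, whereas $d(x,v) \le d(x,u) + d(u,v) \le i + \nu < N - D$; so $v$ cannot lie on $\sigma$ and must be $v = \gamma_0(j)$ for some $j$. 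Since $d(x,\gamma(i)) = i$ and $d(x,\gamma_0(j)) = j$, the triangle inequality gives $|i - j| \le d(u,v) \le \nu$. Thus every element of $S_i$ lies in $B_\nu^X(\gamma_0(j))$ for some $j$ with $|i-j| \le \nu$, i.e. $S_i \subseteq \bigcup_{|j-i| \le \nu} B_\nu^X(\gamma_0(j))$, a union of at most $2\nu+1$ balls each of cardinality $|B_\nu^X(e)|$. Setting $B := (2\nu+1)\,|B_\nu^X(e)|$ gives the bound, uniformly in $x$, $\eta$, and $i$.

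The main obstacle is conceptual rather than computational: because a single $\mathcal{H}$-edge can have arbitrarily large $d_X$-length, neither $S_i$ nor the consecutive vertices of a single ray need be $d_X$-bounded, and the infinitude of $d$-balls blocks any naive counting. What makes the argument succeed is the simultaneous use of two independent forms of control — Theorem \ref{3.0} turns the $d$-closeness of the fellow-travelling rays into genuine $d_X$-closeness, while the geodesic parametrisation pins the matched index $j$ to within $\nu$ of $i$ — and it is the index control that replaces an uncontrolled $d_X$-neighbourhood of all of $\gamma_0$ by a bounded union of finite balls. The only points requiring care are the justification of the fellow-travelling constant $D$ and the verification that the matched vertex $v$ lands on $\gamma_0$ rather than on the short side $\sigma$, and both are dispatched by taking $N$ large.
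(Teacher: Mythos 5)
Your proposal is correct and follows essentially the same route as the paper: truncate $\gamma$ and $\gamma_0$ at a sufficiently large index, close off the triangle with a short geodesic, apply Theorem \ref{3.0} to get a vertex $v$ with $d_X(\gamma(i),v)\le\nu$, rule out $v$ lying on the short side by the choice of cut-off, and pin the index of $v$ on $\gamma_0$ near $i$ to cover $S_i$ by boundedly many finite $d_X$-balls. The only differences are cosmetic (the paper uses the explicit fellow-travelling constant $2\nu$ and a $3\nu$ index window, yielding $B=(6\nu+1)|B_X^{\nu}(1)|$, whereas your geodesic-parametrisation argument gives the slightly sharper $\nu$ window).
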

	
	\begin{proof}
		
		Take the finite generating set $X$ to be as in Theorem \ref{3.0}. Let $i \in \mathbb{N}$. Let $\nu$ be the constant from Theorem \ref{3.0}. Note that $\hat{\G}$ is $\nu$-hyperbolic. Fix any $\gamma_0 \in CGR(x,\eta)$ and let $k = i + 3\nu + 1$. We will show that for each $\gamma \in \text{CGR}(x, \eta)$, there exists a vertex $v$ on $\gamma_0$ with $d(v, \gamma_0(i)) \leq 3 \nu$ and such that $d_X(\gamma(i), v) \leq \nu$. 
		
		\vs
		
		Let $\gamma \in CGR(x,\eta)$ be arbitrary. Begin by joining $\gamma(k)$ and $\gamma_0(k)$ with a geodesic $\alpha$ (see Figure 1). By $\nu$-hyperbolicity of $\hat{\G}$, we have that $d(\gamma(k), \gamma_0(k)) \leq 2 \nu$, so $\alpha$ has length $\ell(\alpha)$ at most $2 \nu$.
		
		\vs
		
		Letting $\vert_k$ denote the restriction of a geodesic to $\{0,1,...,k\}$, we apply Theorem \ref{3.0} to the geodesic triangle with sides $\gamma_0 \vert_k, \alpha$ and $\gamma \vert_k$,  By Theorem \ref{3.0}, there exists a vertex $v$ on $\gamma_0 \vert_k$ or on $\alpha$ such that $d_X(\gamma(i), v) \leq \nu$. We cannot have $v$ on $\alpha$ because then we would have $d(\gamma(i), v) \leq \nu$ (since $d \leq d_X$), which would imply by the triangle inequality that $$k-i = d(\gamma(i), \gamma(k)) \leq d(\gamma(i), v) + d(v, \gamma(k)) \leq  d(\gamma(i), v) +\ell(\alpha) \leq \nu + 2 \nu = 3 \nu$$ contradicting our choice of $k$. Therefore, we must have that $v$ is on $\gamma_0 \vert_k$.

	\vs

Lastly, let us show that $d(v, \gamma_0(i)) \leq 3 \nu$. By $\nu$-hyperbolicity, we have $d(\gamma(i), \gamma_0(i)) \leq 2 \nu$, and note that $d_X(\gamma(i), v) \leq \nu$ implies $d(\gamma(i), v) \leq \nu$, so by the triangle inequality, $$d(v, \gamma_0(i)) \leq d(v, \gamma(i)) + d(\gamma(i), \gamma_0(i)) \leq \nu + 2 \nu = 3 \nu$$

\vs 

We conclude that for each $i \in \N$ and each $\gamma \in \text{CGR}(x, \eta)$, $\gamma(i)$ must be $\nu$-close in $d_X$ to a vertex $v$ on $\gamma_0$ with $d(v, \gamma_0(i)) \leq 3 \nu$. There are at most $6 \nu + 1$ such vertices on $\gamma_0$, so we obtain that $\vert \{\gamma(i) : \gamma \in \text{CGR}(x, \eta)\} \vert \leq (6 \nu+1) \vert B_X^{\nu}(1) \vert$. Thus, we set $B = (6 \nu + 1)\vert B_X^{\nu}(1)\vert$. 

\begin{figure}[H]
	\centering
	\includegraphics[width=0.5\linewidth]{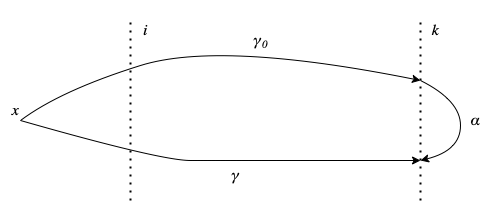}
	\caption{The arrangement of geodesics in the proof of Theorem \ref{3.1}.}
	\label{fig:33-arrangement}
\end{figure}

\end{proof}

As a corollary of Theorem \ref{3.1}, we obtain the following: 

\begin{thm}
	\label{3.4}
		Let $G$ be a finitely generated group hyperbolic relative to a collection of subgroups $\{H_1,...,H_n\}$. There exists a finite generating set $X$ of $G$ such that the following holds. If $\hat{\G}$ is the associated relative Cayley graph, then $\text{Geo}_1(x, \eta) \triangle \text{Geo}_1(y, \eta)$ is finite for each $x,y \in G$ and each $\eta \in \partial \hat{\G}$. 
\end{thm}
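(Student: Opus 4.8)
The plan is to follow the proof of \cite[Theorem 5.9]{MS20}, keeping the same finite generating set $X$ as in Theorems \ref{3.0} and \ref{3.1}, and replacing every appeal to local finiteness of the Cayley graph by the uniform level bound of Theorem \ref{3.1}. First I would reduce to the case of adjacent vertices. Since $\vert A \triangle C \vert \leq \vert A \triangle B \vert + \vert B \triangle C \vert$, finiteness of symmetric differences is transitive, and since $G$ is generated by the finite set $X$, any $x, y \in G$ are joined by a chain $x = g_0, g_1, \dots, g_m = y$ with $d_X(g_j, g_{j+1}) = 1$. It therefore suffices to prove $\vert \text{Geo}_1(x, \eta) \triangle \text{Geo}_1(y, \eta) \vert < \infty$ when $x$ and $y$ are $X$-neighbours, and then telescope along the chain.

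Next I would isolate the purely structural content of \cite[Section 5]{MS20}. The nesting of the combinatorial sectors $Q(y, \xi)$, the behaviour of $\eta$-special vertices, and the controlled way in which the minimal-distance families $Y(x, \xi)$ and $Y(y, \xi)$ differ as $x$ passes to an adjacent $y$ all depend only on the hyperbolicity of $\hat{\G}$ and on the combinatorial geodesic structure, and not on local finiteness. Extracting the relevant intermediate step from the proof of \cite[Theorem 5.9]{MS20}, I would establish a constant $N$, depending on $\eta$ and on the slimness constant $\nu$ of Theorem \ref{3.0}, such that $\text{Geo}_1(x, \eta)$ and $\text{Geo}_1(y, \eta)$ agree outside the ball $\{z : d(x, z) \leq N\}$; equivalently, the symmetric difference is contained in this ball.

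The genuinely new difficulty is that this ball is infinite, since $\hat{\G}$ is not locally finite, so finiteness cannot follow from merely counting vertices in a ball. Here Theorem \ref{3.1} does the essential work. If $z \in Q(y, \xi)$ with $y \in Y(x, \xi)$ satisfies $d(x, z) \leq N$, then $z$ lies on a combinatorial geodesic ray from $y$ asymptotic to the ray from $x$ through $y$, so hyperbolicity gives $d(x, y) \leq d(x, z) + C \leq N + C$ for a constant $C = C(\nu)$; hence only $\eta$-special vertices $y$ within distance $N' := N + C$ of $x$ can contribute to the ball. Every vertex of $\text{Geo}(x, \eta)$ at distance $i$ from $x$ lies in $\{\gamma(i) : \gamma \in \text{CGR}(x, \eta)\}$, so by Theorem \ref{3.1} there are at most $B(N'+1)$ such special vertices $y$. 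For each of these finitely many $y$ we have $\bigcup_{\xi} Q(y, \xi) \subseteq \text{Geo}(y, \eta)$, and the part meeting the ball lies within boundedly many levels of $y$, hence has at most $B(N'+1)$ vertices, again by Theorem \ref{3.1}. Intersecting $\text{Geo}_1(x, \eta)$, and likewise $\text{Geo}_1(y, \eta)$, with the ball therefore yields a finite set, and the same bound controls the symmetric difference.

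The main obstacle is the middle step: verifying that the localization of the symmetric difference in \cite[Theorem 5.9]{MS20} goes through without local finiteness, that is, that the argument showing the two modified bundles coincide far towards $\eta$ rests only on hyperbolicity and on the combinatorics of $\eta$-special vertices and their sectors. Once this localization is secured, the final counting is routine: Theorem \ref{3.1} converts the statement ``contained in a bounded ball'' into ``finite'' precisely in the regime where the ball itself is infinite, which is the one point at which the relatively hyperbolic setting departs from \cite{MS20}.
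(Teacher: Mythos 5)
Your overall strategy is the right one --- Theorem \ref{3.1} is indeed the substitute for local finiteness of the ambient graph --- but the proposal has a genuine gap at exactly the point you yourself flag as ``the main obstacle.'' The claim that $\text{Geo}_1(x,\eta)$ and $\text{Geo}_1(y,\eta)$ agree outside a ball of radius $N$ about $x$, with the agreement ``resting only on hyperbolicity and the combinatorics of $\eta$-special vertices,'' is asserted rather than proved, and it is not a routine extraction from \cite[Section 5]{MS20}: the argument there uses local finiteness repeatedly in its intermediate steps (for instance to guarantee that the minimal-distance sets $Y(x,\xi)$ are finite and nonempty, and to show that sectors based at nearby special vertices have finite symmetric difference), not merely in a final counting step. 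So the decomposition ``localize the symmetric difference to a ball, then count using Theorem \ref{3.1}'' defers essentially all of the content of the theorem to an unverified step, and it is not clear that the localization can be separated from finiteness considerations in the clean way you describe. (Your final counting paragraph, by contrast, is sound: if $z\in Q(y,\xi)$ with $y\in\text{Geo}(x,\eta)$ and $d(x,z)\le N$, then hyperbolicity gives $d(x,y)\le N+2\nu$, and Theorem \ref{3.1} bounds each level of $\text{Geo}(x,\eta)$ and of $\text{Geo}(y,\eta)$ by $B$.)

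The paper's proof is a one-liner precisely because the work you propose to redo has already been done in the reference: \cite[Theorem 5.9]{MS20} is proved under the hypothesis that the bundles $\text{Geo}(x,\eta)$ --- rather than the graph itself --- are uniformly locally finite. Once Theorem \ref{3.1} establishes that hypothesis for $\hat{\G}$, the conclusion follows by direct citation. If you want a self-contained argument instead, the honest route is to go through the proof of \cite[Theorem 5.9]{MS20} step by step and check that every invocation of local finiteness is in fact an invocation of local finiteness of some $\text{Geo}(z,\eta)$; that verification is what your proposal is missing.
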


\begin{proof}
	Let $X$ be as in Theorem \ref{3.1}. By Theorem \ref{3.1}, we have that $\text{Geo}(x,\eta)$ is uniformly locally finite for each $x \in G$ and $\eta \in \partial \hat{\G}$. In \cite[Theorem 5.9]{MS20}, it is proved that if a hyperbolic graph $\G$ has the property that $\text{Geo}(x,\eta)$ is uniformly locally finite for each vertex $x$ and each $\eta \in \partial \G$, then $\text{Geo}_1(x, \eta) \triangle \text{Geo}_1(y, \eta)$ is finite for each pair of vertices $x,y$ and each $\eta \in \partial \G$. Therefore, $\text{Geo}_1(x, \eta) \triangle \text{Geo}_1(y, \eta)$ is finite for each $x,y \in G$ and each $\eta \in \partial \hat{\G}$. 
\end{proof}

\begin{rem}
	Note that Theorem \ref{3.1} implies that if a relatively hyperbolic group $G$ is generated by a finite set $X$ as in Theorem \ref{3.4}, and  if the set of ends of the associated relative Cayley graph is the same as $\partial \hat{\G}$, then the ends of $\hat{\G}$ have uniformly bounded degree (see \cite[Section 2]{Stall} for the definition of ends and the degree of an end). This appears to not have been known for relative Cayley graphs of relatively hyperbolic groups. 
\end{rem}

\section{Hyperfiniteness of the boundary action}
\label{4}

In this section, we establish the hyperfiniteness of the boundary actions of relatively hyperbolic groups as a consequence of Theorem \ref{3.4}. Our arguments follow \cite[Section 6]{MS20}. The main difference here is in our coding of labels of geodesics. In this section, we fix a finite generating set $X$ for $G$ as in Theorem \ref{3.4} and let $\hat{\G}$ denote the associated relative Cayley graph of $G$ with respect to $\{H_1,...,H_n\}$ and $X$. 


\vs 


First, we give a binary coding to the symmetrized generating set $S :=( X \cup \mathcal{H})^{\pm}$. Using that $S$ is countably infinite, we fix a bijection $f: S  \to 2^{<\N} := \bigcup_{n \in \N}2^n$ from $S$ to the set $2^{<\N}$ of all finite binary sequences (which we can identify with the set of all finitely supported, infinite binary strings). The label of a geodesic ray is then coded as an element of $(2^{<\N})^{\N}$, the set of all infinite sequences of finite binary strings. 

\vs 

We will need to order elements of $(2^n)^n$ (i.e. the set of length $n$ sequences of length $n$ binary strings) for each $n$. Following \cite[Section 7]{DJK94}, for each $m_1, m_2 \in \N$, each $w = (w_0,w_1,...,w_{n-1}) \in (2^{m_1})^{m_2}$ and for each $n \in \N$ with $n \leq m_1, m_2$, we put $w \vert_n = ((w_0)\vert_n, (w_1) \vert_n,..., (w_{n-1}) \vert_n)$, where $(w_j)\vert_n$ is the restriction of the length $m_1$ binary sequence $w_j$ to the first $n$ entries. Similarly, if $w \in (2^{\N})^{\N}$, we put $w \vert_n =  ((w_0)\vert_n, (w_1) \vert_n,..., (w_{n-1}) \vert_n)$. If we visualize $w \in (2^n)^n$ as an $n \times n$ matrix, then $w \vert_i$ is an $i \times i$ submatrix of the $n \times n$ matrix $w$, starting at the top left corner of $w$. 

\vs 

For each $n \in \mathbb{N}$, we fix a total order $<_n$ on $(2^n)^n$ as in \cite[Section 7]{DJK94} such that for all $w,v \in (2^{n+1})^{n+1}$, $w\vert_n <_n v\vert_n \implies w <_{n+1} v$. Given $\gamma \in \text{CGR}(g, \eta)$, we define $\text{lab}(\gamma) \in (2^{<\N})^{\N}$ to be its coded label. Therefore, according to above, $\text{lab}(\gamma) \vert_n \in (2^n)^n$ denotes the restricted label. Now, analogously to \cite[Definition 6.1]{MS20}, we put:

\begin{defn}
	For $\eta \in \partial\hat{\G}$, define:
	\begin{equation*}
		C^{\eta} = \{(g, \text{lab}(\gamma) \vert_n) \in G \times (2^n)^n : g \in Geo_1(e, \eta), \gamma \in CGR(g, \eta), n \in \N\}
	\end{equation*}
\end{defn}

\begin{defn}
	An $s$ in $(2^n)^n$ \textbf{occurs} in $C^\eta$ if $(g, s) \in C^\eta$ for some $g \in Geo_1(e, \eta)$. An $s$ in $(2^n)^n$ \textbf{occurs infinitely often} in $C^\eta$ if $(g, s) \in C^\eta$ for infinitely many $g \in Geo_1(e, \eta)$. 
\end{defn}

Note that for each $n \in \N$, there exists $s \in (2^n)^n$ which occurs infinitely often in $C^\eta$ because taking any $\gamma \in \text{CGR}(e, \eta)$, by \cite[Proposition 5.8]{MS20}, $\gamma \setminus \text{Geo}_1(e, \eta)$ is finite, so there exists some $N$ such that for all $k \geq N$, $\gamma(k)\in \text{Geo}_1(e, \eta)$. Then $(\gamma(k), \text{lab}((\gamma(i))_{i \geq k})\vert_n) \in C^\eta$ and $\text{lab}((\gamma(i))_{i \geq k})\vert_n \in (2^n)^n$ for each $k \geq N$. Since $(2^n)^n$ is finite, by the Pigeonhole Principle, some $s \in (2^n)^n$ must repeat infinitely often in $C^{\eta}$, that is, $(\gamma(k), s) \in C^\eta$ for infinitely many $k \geq N$. For each $n \in \mathbb{N}$, we can therefore choose the minimal (in the order $<_n$ defined above) such $s \in (2^n)^n$ occuring infinitely often in $C^\eta$. We shall denote this element  by $s_n^{\eta}$.  

\vs 

\begin{prop}
	\label{4.res}
	For each $n \in \mathbb{N}$, we have that $(s_{n+1}^\eta) \vert_n = s_n^{\eta}$.
\end{prop}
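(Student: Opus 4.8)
The plan is to prove the two inequalities $s_n^\eta \leq_n (s_{n+1}^\eta)\vert_n$ and $(s_{n+1}^\eta)\vert_n \leq_n s_n^\eta$ separately and then invoke totality of $<_n$. The first inequality will be immediate from minimality together with a simple restriction observation; the second will require a ``lifting'' argument that uses the Pigeonhole Principle a second time and, crucially, the compatibility property $w\vert_n <_n v\vert_n \implies w <_{n+1} v$ of the orders.

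First I would record the basic restriction fact: whenever $(g, \text{lab}(\gamma)\vert_{n+1}) \in C^\eta$ we also have $(g, \text{lab}(\gamma)\vert_n) \in C^\eta$, since $(\text{lab}(\gamma)\vert_{n+1})\vert_n = \text{lab}(\gamma)\vert_n$ by the definition of the restriction operation on coded labels. Consequently, if some $w \in (2^{n+1})^{n+1}$ occurs infinitely often in $C^\eta$, then $w\vert_n$ occurs infinitely often in $C^\eta$: for each of the infinitely many witnesses $g$, the same $\gamma \in \text{CGR}(g,\eta)$ witnessing $(g,w) \in C^\eta$ also witnesses $(g, w\vert_n) \in C^\eta$. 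Applying this to $w = s_{n+1}^\eta$ shows that $(s_{n+1}^\eta)\vert_n$ occurs infinitely often, so by minimality of $s_n^\eta$ we obtain $s_n^\eta \leq_n (s_{n+1}^\eta)\vert_n$.

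For the reverse inequality I would argue by contradiction, assuming $s_n^\eta <_n (s_{n+1}^\eta)\vert_n$ strictly. Since $s_n^\eta$ occurs infinitely often, there are infinitely many $g \in \text{Geo}_1(e,\eta)$ together with rays $\gamma_g \in \text{CGR}(g,\eta)$ satisfying $\text{lab}(\gamma_g)\vert_n = s_n^\eta$. For each such $g$ consider the level-$(n+1)$ word $\text{lab}(\gamma_g)\vert_{n+1} \in (2^{n+1})^{n+1}$; all of these restrict to $s_n^\eta$ at level $n$. As there are only finitely many elements of $(2^{n+1})^{n+1}$ restricting to $s_n^\eta$, the Pigeonhole Principle produces a single $w \in (2^{n+1})^{n+1}$ with $w\vert_n = s_n^\eta$ such that $w = \text{lab}(\gamma_g)\vert_{n+1}$ for infinitely many $g$; hence $w$ occurs infinitely often in $C^\eta$. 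Now the order-compatibility property applies: from $w\vert_n = s_n^\eta <_n (s_{n+1}^\eta)\vert_n$ it forces $w <_{n+1} s_{n+1}^\eta$, contradicting the minimality of $s_{n+1}^\eta$ among level-$(n+1)$ words occurring infinitely often. Therefore $(s_{n+1}^\eta)\vert_n \leq_n s_n^\eta$, and combining the two inequalities gives $(s_{n+1}^\eta)\vert_n = s_n^\eta$.

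I expect the reverse inequality to be the main obstacle: restricting $s_{n+1}^\eta$ only delivers one direction, and to push $s_n^\eta$ back up to level $n+1$ one must manufacture a single coherent extension of $s_n^\eta$ that still occurs infinitely often. This is precisely where the second pigeonhole (over the finitely many extensions of $s_n^\eta$) together with the compatibility of the orders does the real work; the orders $<_n$ from \cite{DJK94} are designed exactly to make this transfer valid. I would also take care to verify that the restriction operation on coded labels is literally coordinatewise, so that the identity $(\text{lab}(\gamma)\vert_{n+1})\vert_n = \text{lab}(\gamma)\vert_n$ holds on the nose, since both directions of the argument rest on it.
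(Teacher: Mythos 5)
Your proof is correct and follows essentially the same route as the paper's: restriction preserves infinite occurrence (giving $s_n^\eta \leq_n (s_{n+1}^\eta)\vert_n$ by minimality), and a pigeonhole over the finitely many level-$(n+1)$ extensions of $s_n^\eta$ combined with the compatibility $w\vert_n <_n v\vert_n \implies w <_{n+1} v$ rules out strict inequality. The only difference is presentational — you split the argument into two inequalities where the paper runs a single case analysis — so there is nothing substantive to add.
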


\begin{proof}
	Since $s_{n+1}^{\eta}$ appears infinitely often in $C^\eta$, so does $(s_{n+1}^\eta) \vert_n $, so $s_n^\eta <_n (s_{n+1}^\eta) \vert_n $ or $s_n^\eta = (s_{n+1}^\eta) \vert_n$. If $s_n^\eta <_n (s_{n+1}^\eta) \vert_n $, then since there are only finitely many extensions of $s_n^\eta$ to an element of $(2^{n+1})^{n+1}$ and since $s_n^\eta$ appears infinitely often in $C^\eta$, there would exist $s \in (2^{n+1})^{n+1}$ such that $s \vert_n = s_n^{\eta}$ and $s$ appears infinitely often in $C^{\eta}$. Since $s\vert_n <_n (s_{n+1}^\eta) \vert_n $, we obtain that $s <_{n+1} s_{n+1}^\eta$, contradicting the minimality of $s_{n+1}^\eta$. Therefore, $s_n^\eta = (s_{n+1}^\eta) \vert_n$.
\end{proof}

\vs 

We now fix a total order $\leq $ on the group $G$ such that $g \leq h \implies d(e, g) \leq d(e, h)$ (for instance, fixing a total order on $S$, we can define $\leq$ to be lexicographic order on elements of $G$ as words over $S$, where we choose for each element of $G$ the lexicographically least word over $S$ representing it). Using the same notation as in \cite[Section 6]{MS20}, we put:

\begin{defn}
	For each $n \in \N$ and $\eta \in \partial \hat{\G}$, put $T_n^\eta = \{g \in Geo_1(e, \eta) : (g, s_n^\eta) \in C^\eta \}$ and put $g_n^\eta = \min T_n^\eta$ (where the minimum is with respect to the above total order on $G$). Put $k_n^\eta = d(e, g_n^\eta)$ for each $n \in \N$.
\end{defn}

Note that $\min T_n^\eta$ exists because $T_n^{\eta} \subseteq \text{Geo}(e,\eta)$ and $\text{Geo}(e,\eta)$ is locally finite by Theorem \ref{3.1}. By definition of $\leq$ and since $s_n^\eta = (s_{n+1}^{\eta})\vert_n$ for each $n$, we have that $(T_n^{\eta})_n$ is a non-increasing sequence of sets and therefore the sequence $(k_n^\eta)_{n \in \mathbb{N}}$ is a non-decreasing sequence of natural numbers. 

\vs 

We shall now generalize the results of \cite[Section 6]{MS20}, which were stated for Cayley graphs of hyperbolic groups. Recall that we fix our hyperbolicity constant to be $\nu$ from Theorem \ref{3.0}. We recall that the topology on $G$ is the discrete topology induced by the relative metric $d$, the topology on $\partial \hat{\G}$ is the canonical topology on the geodesic boundary, having countable neighbourhood base $V(\eta,m)^g = \{\mu \in \partial \hat{\G} : \exists \gamma \in \text{CGR}(g, \mu) \text{ and } \lambda \in \text{CGR}(g, \eta) \text{ with } d(\gamma(t), \lambda(t)) \leq 2 \nu \text{ for each } t \leq m\}$ for each $m \in \N$, each $\eta \in \partial \hat{\G}$ and each basepoint $g \in G$, $G^{\mathbb{N}}$ has the product topology and $C_{hb}(\hat{\G})$ has the topology of pointwise convergence. 

\vs

Let us establish a link between the topology of  $\partial \hat{\G}$ and sequences of CGRs in $\hat{\G}$. The condition in the following proposition is often used as the definition of the topology on $\partial X$ when $X$ is a proper hyperbolic space, but in general does not give the same topology on $\partial X$ that we work with here. 

\begin{prop}
	\label{4.1}
	Suppose that $\eta_n \to \eta$ in $\partial \hat{\G}$. Then for any $g \in G$, there exists a sequence of CGRs $(\gamma_n)_n$ such that $\gamma_n \in CGR(g, \eta_n)$ for each $n$ and such that every subsequence of $(\gamma_n)_n$ itself has a subsequence which converges to some CGR $\gamma \in CGR(g, \eta)$. 
\end{prop}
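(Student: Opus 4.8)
The plan is to construct the geodesics $\gamma_n$ directly from the neighborhood-base definition of convergence in $\partial\hat\G$, and then extract a convergent subsequence using the uniform local finiteness of combinatorial geodesic rays guaranteed by Theorem \ref{3.1}. First I would unpack the hypothesis $\eta_n \to \eta$ at the basepoint $g$: since the sets $V(\eta, m)^g$ form a neighborhood base at $\eta$, for each $m$ there is an index $N_m$ so that $\eta_n \in V(\eta, m)^g$ for all $n \geq N_m$. After passing to a relabeling (or by choosing, for each $n$, the largest $m$ with $n \geq N_m$), I obtain for each $n$ a combinatorial geodesic ray $\gamma_n \in \text{CGR}(g, \eta_n)$ together with a witnessing ray $\lambda_n \in \text{CGR}(g, \eta)$ such that $d(\gamma_n(t), \lambda_n(t)) \leq 2\nu$ for all $t \leq m_n$, where $m_n \to \infty$. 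This is the sequence of CGRs the proposition asks for; the point of the $\lambda_n$ is that each $\gamma_n$ tracks \emph{some} geodesic to the target point $\eta$ up to distance $2\nu$ on an initial segment of length $m_n$.

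Next I would run a diagonal extraction. Fix an arbitrary subsequence of $(\gamma_n)_n$; I want to find a sub-subsequence converging pointwise (i.e.\ coordinatewise, with eventual agreement on each finite initial segment) to a ray $\gamma$. The key input is Theorem \ref{3.1}: for each fixed $i$, the set $\{\delta(i) : \delta \in \text{CGR}(g, \mu),\ \mu \in \partial\hat\G\}$ takes only finitely many values once we know $\delta(i)$ lies uniformly close to the finitely many possible positions — more carefully, since all our $\gamma_n$ start at the common basepoint $g$ and are geodesics, $\gamma_n(i)$ lies in the ball $B_i(g)$, and the relevant vertices that can appear as $\gamma_n(i)$ form a finite set because each $\gamma_n(i)$ is $2\nu$-close to $\lambda_n(i) \in \text{Geo}(g,\eta)$, which by Theorem \ref{3.1} ranges over a uniformly bounded (hence, together with the bounded-size $d$-ball of radius $2\nu$, finite) set. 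So the first coordinates $\gamma_n(1)$ take finitely many values; pass to a subsequence constant in the first coordinate, then in the second, and so on, and diagonalize to get a subsequence along which $\gamma_n(i)$ stabilizes for every $i$. The stabilized values assemble into a single ray $\gamma$, which is a CGR because being a combinatorial geodesic is a condition on each finite initial segment and each segment is eventually matched exactly.

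Finally I would verify $\gamma \in \text{CGR}(g, \eta)$, i.e.\ that the limit ray actually converges to $\eta$ in the boundary. Here I use the witnessing rays: along the extracted subsequence, $\gamma(t) = \gamma_n(t)$ for $n$ large relative to $t$, and $d(\gamma_n(t), \lambda_n(t)) \leq 2\nu$ with $\lambda_n \in \text{CGR}(g,\eta)$, so $\gamma$ stays uniformly $2\nu$-close to geodesics representing $\eta$ on every initial segment; by the definition of $V(\eta, m)^g$ this places $\eta_\gamma$ (the boundary point of $\gamma$) in every neighborhood $V(\eta,m)^g$, forcing $\xi_\gamma \to \eta$, i.e.\ $\gamma \in \text{CGR}(g,\eta)$. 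The main obstacle I anticipate is the last step: because $\partial\hat\G$ carries the geodesic-boundary topology rather than the pointwise/horofunction topology, pointwise convergence of $\gamma_n$ to $\gamma$ does not automatically yield convergence of the associated boundary points, and one must feed the fellow-traveling bound $2\nu$ on arbitrarily long initial segments back through the definition of the neighborhood base $V(\eta,m)^g$ to conclude that $\gamma$ represents $\eta$. Keeping careful track of how $m_n \to \infty$ interacts with the diagonalization — so that the fellow-traveling estimate survives to the limit on every fixed initial segment — is the delicate point.
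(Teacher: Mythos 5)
Your overall architecture matches the paper's: extract the rays $\gamma_n \in \mathrm{CGR}(g,\eta_n)$ and witnesses $\lambda_n \in \mathrm{CGR}(g,\eta)$ from the neighbourhood base $V(\eta,m)^g$, show that $\{\gamma_n(i) : n\}$ is finite for each $i$, apply K\H{o}nig's lemma (or an equivalent diagonal extraction), and pass the fellow-travelling bound to the limit to see that $\gamma \in \mathrm{CGR}(g,\eta)$. The first and last steps are fine and essentially identical to the paper's.

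However, there is a genuine gap in the middle step, which is the crux of the whole proposition. You argue that $\gamma_n(i)$ takes finitely many values because it is $2\nu$-close \emph{in the relative metric $d$} to $\lambda_n(i)$, which ranges over a set of size at most $B$ by Theorem~\ref{3.1}, ``together with the bounded-size $d$-ball of radius $2\nu$.'' But the $d$-ball of radius $2\nu$ in $\hat{\G}$ is \emph{infinite}: the relative Cayley graph is a Cayley graph over the infinite generating set $X \cup \mathcal{H}$ and is not locally finite. (The same issue affects your remark that $\gamma_n(i)$ lies in ``the ball $B_i(g)$.'') So $d$-proximity to a finite set does not yield finiteness, and this is precisely the difficulty the paper is built to circumvent. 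The correct repair, which is what the paper does, is to fix a single $\lambda \in \mathrm{CGR}(g,\eta)$, note $d(\gamma_n(t),\lambda(t)) \le 4\nu$ for $t \le n$, and then rerun the slim-triangle argument of Theorem~\ref{3.1}: applying Theorem~\ref{3.0} to the triangle with sides $\gamma_n|_k$, $\lambda|_k$ and a short geodesic joining $\gamma_n(k)$ to $\lambda(k)$ (with $k = i + 5\nu + 1$) produces a vertex $v$ on $\lambda$ with $d_X(\gamma_n(i), v) \le \nu$. Since $X$ is finite, $d_X$-balls are finite, and finiteness of $\{\gamma_n(i) : n\}$ follows. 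Without passing through the word metric $d_X$ of the finite generating set, the compactness step does not go through.
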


\begin{proof}
	Since $\eta_n \to \eta$, by definition of the topology on $\partial \hat{\G}$, we have that for each $m \in \N$, there exists a CGR $\gamma_m \in \text{CGR}(g, \eta_m)$ and $\lambda_m \in \text{CGR}(g, \eta)$ such that $d(\gamma_m(t), \lambda_m(t)) \leq 2 \nu$ for every $t \leq m$. Fixing any $\lambda \in \text{CGR}(g, \eta)$, we obtain that $d(\gamma_m(t), \lambda(t)) \leq 4 \nu$ for every $t \leq m$ and every $m$, since $\lambda_m, \lambda \in \text{CGR}(g, \eta)$ for all $m$ and hence are $2 \nu$ close for each $m$. We claim that every subsequence of $(\gamma_n)_n$ has a convergent subsequence. First, let us argue as in the proof of Theorem \ref{3.1} to show that for each $i$, $\vert \{\gamma_n(i) : n \in \N\}\vert $ is finite.
	
	\vs 
	
	Given $i \in \N$, set $k = i + 5\nu + 1$. For each $n \geq k$, we have $d(\gamma_n(k), \lambda(k)) \leq 4 \nu$. Let $u$ denote a geodesic between  $\gamma_n(k)$ and $\lambda(k)$ (see Figure 2). 
	
	\begin{figure}[H]
		\centering
		\includegraphics[width=0.5\linewidth]{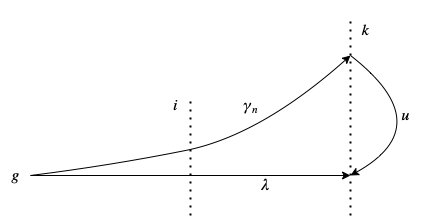}
		\caption{The geometry of the geodesics $\gamma_n$, $\lambda$.}
		\label{fig:top-lemma}
	\end{figure}
	
	Then arguing as in the proof of Theorem \ref{3.1}, there exists a vertex $v$ on $\lambda$ with $d_X(\gamma_n(i), v) \leq \nu$. It follows that $\vert \{\gamma_n(i) : n \geq k\} \vert$ is finite, and therefore that $\vert \{\gamma_n(i) : n \in \N\} \vert$ is finite. Therefore, $\bigcup_n \gamma_n \cup \lambda$ is locally finite. Since $\bigcup_n \gamma_n \cup \lambda$ is locally finite, by Kőnig's lemma it follows that every subsequence of $(\gamma_n)_n$ has a convergent subsequence. The limit CGR $\gamma$ of this subsequence is in $\text{CGR}(g,\eta)$ because for each $t$, $d(\gamma_k(t), \lambda(t)) \leq 4 \nu$ for all but finitely many $k$, so $d(\gamma(t), \lambda(t)) \leq 4 \nu$ for all $t$.

\end{proof}

	We now generalize the claims of \cite[Section 6]{MS20} to relatively hyperbolic groups. We begin by generalizing Claim 1 of \cite{MS20}. In Claim 1 in \cite{MS20}, the set $C$ below is proved to be compact, while here it is only closed. 

\begin{clm}
	\label{4.2}
	The set $C = \{\gamma \in G^{\mathbb{N}} : \gamma \text{ is a CGR}\}$ is closed. Furthermore, for any $g \in G$ and any $\eta \in \partial \hat{\G}$, the set $CGR(g, \eta) \subseteq G^{\N}$ is compact. 
\end{clm}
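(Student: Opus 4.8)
The plan is to prove closedness and compactness separately, in each case reducing to the defining local conditions of a CGR and then invoking the uniform local finiteness established in Theorem \ref{3.1}.

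First I would establish that $C$ is closed in $G^{\N}$ with the product topology. A sequence $\gamma = (\gamma(i))_i \in G^{\N}$ is a combinatorial geodesic ray precisely when $\gamma(0)$ is arbitrary and, for all $i < j$, one has $d(\gamma(i), \gamma(j)) = j - i$; equivalently, it suffices to check that $d(\gamma(i), \gamma(j)) = j-i$ for every pair $i < j$. Each such condition $\{\gamma : d(\gamma(i), \gamma(j)) = j - i\}$ depends only on the two coordinates $\gamma(i)$ and $\gamma(j)$, and since $G$ carries the discrete topology, the map $\gamma \mapsto (\gamma(i), \gamma(j))$ is continuous into the discrete space $G \times G$; hence each such set is clopen, being the preimage of a subset of a discrete space. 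Therefore $C$, being an intersection over all pairs $i < j$ of these clopen sets, is closed. This is the routine part.

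Next I would treat compactness of $\text{CGR}(g, \eta)$. The key input is Theorem \ref{3.1}: for the fixed basepoint $g$ and fixed $\eta$, the set $\{\gamma(i) : \gamma \in \text{CGR}(g,\eta)\}$ has cardinality at most $B$ for every $i$. Thus $\text{CGR}(g, \eta)$ is contained in the product $\prod_{i \in \N} F_i$, where each $F_i := \{\gamma(i) : \gamma \in \text{CGR}(g,\eta)\}$ is a finite subset of $G$. By Tychonoff's theorem this product is compact in the product topology (each $F_i$ being finite, hence compact, in the discrete topology on $G$), so it suffices to show $\text{CGR}(g, \eta)$ is a closed subset of it. Closedness of $\text{CGR}(g,\eta)$ inside $G^{\N}$ follows by the same coordinatewise argument as above: membership is cut out by the geodesic conditions $d(\gamma(i),\gamma(j)) = j-i$, the basepoint condition $\gamma(0) = g$, and the condition that $\gamma$ converge to a horofunction lying over $\eta$. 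The first two are clopen as before.

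The main obstacle is the last condition, namely that the limiting horofunction $\xi_\gamma$ of $\gamma$ actually lie over $\eta \in \partial\hat{\G}$ rather than over some other boundary point, and arguing that this is a closed condition. Here I would use Proposition \ref{4.1} in reverse, or rather argue directly: given a sequence $\gamma_k \in \text{CGR}(g,\eta)$ converging coordinatewise to some $\gamma \in G^{\N}$, the closedness of $C$ already shows $\gamma$ is a CGR based at $g$. To see $\gamma \in \text{CGR}(g,\eta)$, I would fix any $\lambda \in \text{CGR}(g,\eta)$ and note, as in the proof of Proposition \ref{4.1}, that two CGRs based at the same point with the same endpoint $\eta$ satisfy $d(\gamma_k(t), \lambda(t)) \leq 2\nu$ for all $t$; passing to the coordinatewise limit preserves this inequality, giving $d(\gamma(t),\lambda(t)) \leq 2\nu$ for all $t$, which forces $\gamma$ to converge to $\eta$ as well. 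This stability of the "$2\nu$-fellow-traveling with a fixed ray to $\eta$" condition under coordinatewise limits is exactly the delicate point, and it is where the slimness constant $\nu$ from Theorem \ref{3.0} and the characterization of the boundary topology via the neighborhood base $V(\eta, m)^g$ must be used carefully.
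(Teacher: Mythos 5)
Your proof is correct and follows essentially the same route as the paper: closedness of $C$ via the finitary (clopen) geodesic conditions on pairs of coordinates, and compactness of $CGR(g,\eta)$ from the uniform bound on $\{\gamma(i) : \gamma \in CGR(g,\eta)\}$ in Theorem \ref{3.1} (your Tychonoff-plus-closedness argument is interchangeable with the paper's one-line appeal to K\H{o}nig's lemma). You are in fact more explicit than the paper on the one delicate point---that a coordinatewise limit of rays in $CGR(g,\eta)$ is not merely a CGR but still converges to $\eta$, via the stability of $2\nu$-fellow-traveling with a fixed $\lambda \in CGR(g,\eta)$---a step the paper leaves implicit and which otherwise appears only inside the proof of Proposition \ref{4.1}.
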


\begin{proof}
	
	Let $(\gamma_n)_n$ be a sequence of elements of $C$ converging  to some $\gamma \in G^{\mathbb{N}}$. We claim that $\gamma$ is a geodesic. Indeed, since $\gamma_n \to \gamma$, for each $m \in \mathbb{N}$, there exists $N \in \mathbb{N}$ such that for all $n \geq N$, we have $\gamma_n \vert_m  = \gamma \vert_m $.  In particular, it follows that $\gamma \vert_m $ is a geodesic, since $\gamma_n \vert_m$ is a geodesic for each $n$. Thus, $\gamma$ is a geodesic ray based at $\lim_n \gamma_n(0)$ and is hence a CGR, so $\gamma \in C$. Therefore, $C$ is closed. 
	
	\vs
	
	The "furthermore" statement follows immediately from Kőnig's lemma, since $\text{Geo}(g, \eta)$ is locally finite (by Theorem \ref{3.1}). 
	
\end{proof}

The next claims are the exact relatively hyperbolic analogues of claims from \cite{MS20} and their proofs are almost identical (most proofs are completely identical), however, we present all proofs for completeness.

\begin{clm} 
	\label{4.3}
	The set $R = \{(\eta, g, \gamma) \in \partial \hat{\G} \times  G^{\mathbb{N}} : \gamma \in CGR(g, \eta)\}$ is closed in $\partial \hat{\G} \times G^{\mathbb{N}}$. 
\end{clm}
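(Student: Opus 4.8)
The plan is to take an arbitrary sequence $(\eta_n, g_n, \gamma_n)_n$ in $R$ converging to some $(\eta, g, \gamma) \in \partial \hat{\G} \times G^{\mathbb{N}}$ and to verify that the limit again lies in $R$, i.e. that $\gamma \in \text{CGR}(g, \eta)$. First I would dispose of the easy parts. Since each $\gamma_n$ is a CGR and the set $C$ is closed by Claim \ref{4.2}, the limit $\gamma$ is itself a CGR; and since $G$ carries the discrete topology, the convergences $g_n \to g$ and $\gamma_n(0) = g_n$ force $\gamma(0) = g$ for all large $n$, so $\gamma$ is a CGR based at $g$. Being a CGR, $\gamma$ represents a well-defined point $\mu \in \partial \hat{\G}$, so that $\gamma \in \text{CGR}(g, \mu)$; it then remains only to show $\mu = \eta$.

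The key step is to prove that $\eta_n \to \mu$ in $\partial \hat{\G}$. Here I would exploit discreteness of $G$ together with the product topology on $G^{\mathbb{N}}$: for each $m \in \N$ there is an index $N_m$ such that $\gamma_n \vert_m = \gamma \vert_m$ for all $n \geq N_m$. For such $n$, the ray $\gamma_n$ lies in $\text{CGR}(g, \eta_n)$, the ray $\gamma$ lies in $\text{CGR}(g, \mu)$, and $d(\gamma_n(t), \gamma(t)) = 0 \leq 2\nu$ for every $t \leq m$. By the definition of the neighbourhood base this says precisely that $\eta_n \in V(\mu, m)^g$. As the sets $V(\mu, m)^g$ form a neighbourhood base at $\mu$, we conclude that $\eta_n \to \mu$.

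Finally, since $\partial \hat{\G}$ embeds homeomorphically into the compact metrizable Bowditch boundary by Theorem \ref{2.bb}, it is metrizable and in particular Hausdorff, so sequential limits are unique. We already have $\eta_n \to \eta$ by hypothesis and $\eta_n \to \mu$ by the previous step, whence $\eta = \mu$ and therefore $\gamma \in \text{CGR}(g, \eta)$, giving $(\eta, g, \gamma) \in R$. This shows $R$ is closed.

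I expect the only genuinely delicate point to be the second step: one must ensure that matching finite initial segments of $\gamma_n$ and $\gamma$ suffices to place $\eta_n$ in the basic neighbourhood $V(\mu, m)^g$, which relies on using the very rays $\gamma_n$ and $\gamma$ (rather than abstract representatives) as the witnesses in the definition of $V(\mu, m)^g$. I note that, in contrast to Claim \ref{4.2}, this argument uses no local finiteness, so the loss of local finiteness in the relative Cayley graph causes no difficulty here.
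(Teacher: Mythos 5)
Your proof is correct, but it takes a genuinely different route from the paper's. Both arguments begin identically: discreteness of $G$ and Claim \ref{4.2} give that the limit $\gamma$ is a CGR based at $g$, representing some boundary point, and the whole content is showing that this point equals $\eta$. The paper does this by pulling information down from the hypothesis $\eta_n \to \eta$: it invokes Proposition \ref{4.1} to produce rays $\gamma'_n \in \text{CGR}(g,\eta_n)$ with a subsequence converging to some $\gamma' \in \text{CGR}(g,\eta)$, notes that $\gamma_{n_k}$ and $\gamma'_{n_k}$ are $2\nu$-fellow-travellers (both representing $\eta_{n_k}$ from the same basepoint), and passes to the limit to conclude $d(\gamma(m),\gamma'(m)) \leq 2\nu$ for all $m$, so $\gamma$ is asymptotic to a ray representing $\eta$. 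You instead push information up: the eventual agreement of initial segments $\gamma_n\vert_m = \gamma\vert_m$ exhibits $\gamma_n$ and $\gamma$ as witnesses placing $\eta_n$ in $V([\gamma],m)^g$, so $\eta_n \to [\gamma]$, and uniqueness of sequential limits forces $[\gamma]=\eta$. Your route is more economical --- it bypasses Proposition \ref{4.1} and hence the local finiteness of geodesic ray bundles and K\H{o}nig's lemma entirely, and the witness bookkeeping in the second step is exactly right --- but it trades this for a reliance on $\partial\hat{\G}$ being Hausdorff, which in this paper is only available through the homeomorphic embedding into the compact metrizable Bowditch boundary (Theorem \ref{2.bb}), i.e.\ through Bowditch's machinery rather than the intrinsic hyperbolic geometry of $\hat{\G}$. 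That reliance is legitimate here (and Hausdorffness could also be checked directly from $\nu$-hyperbolicity), so the argument stands; it is just worth being aware that the paper's version is self-contained modulo Theorem \ref{3.1}, whereas yours leans on the boundary identification.
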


\begin{proof}
	
	Suppose that $(\eta_n , g_n, \gamma_n) \in R$ for all $n$ and that $(\eta_n, g_n, \gamma_n) \to (\eta, g, \gamma)$. Then $\eta_n \to \eta \in \partial \hat{\G}$, $g_n \to g$ in $G$ (so that $(g_n)$ is eventually equal to $g$, by discreteness of $G$) and $\gamma_n \to \gamma$ in $G^{\mathbb{N}}$, so that $\gamma \in\text{CGR}(g, \eta')$ for some $\eta' \in \partial \hat{\G}$ (by Claim \ref{4.2}). We will show that $\eta = \eta'$. 
	
	\vs 
	
	As $\eta_n \to \eta$, by Proposition \ref{4.1}, there exists a sequence $(\gamma'_n)_n$ with $\gamma'_n \in \text{CGR}(g, \eta_n)$ which has a subsequence $(\gamma_{n_k}')_k$ that converges to some $\gamma' \in CGR(g, \eta)$. Choose $k$ large enough such that all $g_{n_k}$ equal $g$, so that $\gamma_{n_k}, \gamma'_{n_k}\in CGR(g, \eta_{n_k})$. We then have that $d(\gamma_{n_k}(m), \gamma'_{n_k}(m)) \leq 2 \nu$ for each $m$. Taking $k \to \infty$, we obtain that $d(\gamma(m), \gamma'(m)) \leq 2 \nu$ for all $m$, and therefore that $\eta = \eta'$. Thus, $(\eta_n, g_n, \gamma_n) \to (\eta, g, \gamma)$ with $\gamma \in \text{CGR}(g, \eta)$, so $(\eta, g, \gamma) \in R$ and so $R$ is closed. 
	
\end{proof}

\begin{clm} 
	\label{4.4}
	The set $F = \{(\eta, g, (\gamma(0), \gamma(1)...,\gamma(n))) \in \partial \hat{\G}\times  G \times G^{< \mathbb{N}}: \gamma \in CGR(g, \eta)\}$ is Borel in $\partial \hat{\G} \times G \times G^{< \mathbb{N}}$. 
\end{clm}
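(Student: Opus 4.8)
The plan is to exploit that both $G$ and $G^{<\N}$ are countable and discrete, so that $\partial\hat{\G}\times G\times G^{<\N}$ is a countable disjoint union of clopen copies of $\partial\hat{\G}$, indexed by the pairs $(g,w)$. Consequently $F$ is Borel if and only if, for every $g\in G$ and every finite tuple $w=(w_0,\dots,w_n)\in G^{n+1}$, the fibre
\begin{equation*}
	F_{g,w}=\{\eta\in\partial\hat{\G}: (\eta,g,w)\in F\}=\{\eta\in\partial\hat{\G}: \exists\,\gamma\in\text{CGR}(g,\eta)\text{ with }\gamma\vert_n=w\}
\end{equation*}
is Borel in $\partial\hat{\G}$. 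I would in fact show that each $F_{g,w}$ is \emph{closed}, whence $F$ is a countable union of closed sets and therefore Borel.

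To prove $F_{g,w}$ closed I would take a sequence $\eta_j\in F_{g,w}$ with $\eta_j\to\eta$ in $\partial\hat{\G}$ and, for each $j$, choose a witness $\gamma_j\in\text{CGR}(g,\eta_j)$ with $\gamma_j\vert_n=w$. If some subsequence $(\gamma_{j_k})_k$ converges to a ray $\gamma$ in $G^{\N}$, then $(\eta_{j_k},g,\gamma_{j_k})\to(\eta,g,\gamma)$ and, since each triple lies in the set $R$ of Claim \ref{4.3}, closedness of $R$ gives $\gamma\in\text{CGR}(g,\eta)$; as $\gamma_{j_k}\vert_n=w$ for all $k$ and $G$ is discrete, the pointwise limit satisfies $\gamma\vert_n=w$, so $\eta\in F_{g,w}$. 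Thus the whole statement reduces to extracting a convergent subsequence of the witnesses $(\gamma_j)_j$.

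This extraction is the main obstacle, precisely because $\hat{\G}$ is not locally finite and so König's lemma cannot be applied to $\bigcup_j\gamma_j$ without first establishing local finiteness. I would establish it exactly as in the proof of Proposition \ref{4.1}: fix any $\lambda_0\in\text{CGR}(g,\eta)$, and observe that since $\eta_j\to\eta$ and any two combinatorial geodesic rays issuing from $g$ and converging to the same boundary point $2\nu$-fellow-travel (the fact already used in Claim \ref{4.3}), the witnesses $\gamma_j$ remain uniformly $d$-close to $\lambda_0$ on every initial segment $\{0,\dots,m\}$ once $j$ is large. The $d_X$-slimness of geodesic triangles (Theorem \ref{3.0}) together with the finiteness of $d_X$-balls then bounds $\vert\{\gamma_j(i):j\in\N\}\vert$ for each $i$, exactly as in Theorem \ref{3.1} and Proposition \ref{4.1}, so $\bigcup_j\gamma_j$ is locally finite and König's lemma yields the desired convergent subsequence.

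The one point requiring care is that I must run this argument on the specific witnesses $\gamma_j$ satisfying $\gamma_j\vert_n=w$, rather than on the rays produced directly by the neighbourhood basis of $\eta$; this is why the fellow-traveling step is needed, to transfer the control provided by $\eta_j\in V(\eta,m)^g$ from the basis rays to the $\gamma_j$. Alternatively, one could avoid re-running the local-finiteness argument altogether by forming the closed set $\{(\eta,g,w,\gamma):(\eta,g,\gamma)\in R,\ \gamma\vert_n=w\}$, whose sections over $(\eta,g,w)$ are the compact sets $\{\gamma\in\text{CGR}(g,\eta):\gamma\vert_n=w\}$, and invoking the Arsenin--Kunugui theorem on projections of Borel sets with $\sigma$-compact sections to conclude directly that $F_{g,w}$, and hence $F$, is Borel.
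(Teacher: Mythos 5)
Your proposal is correct, and your primary argument takes a genuinely different route from the paper. The paper lifts $F$ to the set $F'=\{(\eta,g,w,\gamma'): (\eta,g,\gamma')\in R,\ \gamma'_i=w_i \text{ for } i\leq n\}$, observes that $F'$ is closed by Claim \ref{4.3} and that each section $F'_{(\eta,g,w)}$ is a closed subset of the compact set $\mathrm{CGR}(g,\eta)$ (Claim \ref{4.2}), and then concludes by the projection theorem for Borel sets with compact sections (\cite[Theorem 18.18]{Kech95}) --- i.e.\ exactly the Arsenin--Kunugui alternative you sketch in your last sentence. Your main argument instead works fibre by fibre over the countable discrete index set $G\times G^{<\N}$ and shows each $F_{g,w}$ is closed by hand: extract a convergent subsequence of the witnesses via the fellow-traveling and $d_X$-slimness machinery of Theorem \ref{3.1} and Proposition \ref{4.1}, then apply closedness of $R$. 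This is sound; you correctly identify the one delicate point, namely that the local-finiteness argument must be run on the actual witnesses $\gamma_j$ satisfying $\gamma_j\vert_n=w$ rather than on the basis rays supplied by $\eta_j\in V(\eta,m)^g$, which costs an extra $2\nu$ of fellow-traveling but nothing more. What each approach buys: the paper's is shorter given Claims \ref{4.2} and \ref{4.3}, outsourcing the compactness extraction to a single black-box descriptive-set-theoretic theorem; yours avoids that theorem entirely, repeats a geometric argument already present elsewhere in the paper, and yields the sharper conclusion that $F$ is in fact closed (each fibre sits in a clopen slice), not merely Borel.
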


\begin{proof}
	Let $F' = \{(\eta, g, (\gamma(0), \gamma(1),...,\gamma(n) ),  \gamma' )\in \partial \hat{\G} \times  G \times G^{< \mathbb{N}} \times G^{\mathbb{N}}: (\eta, g, \gamma') \in R \text{ and } \gamma'_i = \gamma_i \text{ for each } 0 \leq i \leq n\}$. By Claim \ref{4.3}, $F'$ is closed in $\partial \hat{\G} \times  G \times G^{< \mathbb{N}} \times G^{\mathbb{N}}$. Note that $F$ is the projection of $F'$ to the first 3 components $\partial \hat{\G} \times G \times G^{<\N}$. Note also that the section $F'_{(\eta, g, (\gamma(0), \gamma(1),...,\gamma(n) ))}$ is compact for every $(\eta, g, (\gamma(0), \gamma(1),...,\gamma(n) )) \in \partial \hat{\G} \times  G^{< \mathbb{N}}$. Indeed,  $F'_{(\eta, g, (\gamma(0), \gamma(1),...,\gamma(n) ))} = \{\gamma' \in CGR(g, \eta) : \gamma'(i)= \gamma(i) \text{ for all } 0 \leq i \leq n\}$, which is a closed subset of the compact set $\text{CGR}(g, \eta)$, hence it is compact. By \cite[Theorem 18.18]{Kech95}, it follows that $F$ is Borel in $\partial \hat{\G} \times  G \times G^{< \mathbb{N}}$. 
\end{proof}

\begin{clm} 
	\label{4.5}
	The set $M = \{(\eta, \xi) \in \partial \hat{\G} \times C_{hb}(\hat{\G}): \xi \in \Xi (\eta)\}$ is Borel in $\partial \hat{\G} \times C_{hb}(\hat{\G})$. 
\end{clm}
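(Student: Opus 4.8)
The plan is to exhibit $M$ as the projection of a Borel set whose fibres in the $G^{\mathbb{N}}$ direction are compact, and then conclude with \cite[Theorem 18.18]{Kech95} exactly as in Claim \ref{4.4}. By the definition of $\Xi(\eta)$ (with the fixed basepoint $z$), we have $(\eta, \xi) \in M$ if and only if there is a CGR $\gamma \in \text{CGR}(z, \eta)$ with $\xi_\gamma = \xi$, where $\xi_\gamma$ is the limit of $\gamma$ in $C_{hb}(\hat{\G})$ provided by \cite[Lemma 3.2]{MS20}. Since $C_{hb}(\hat{\G})$ carries the topology of pointwise convergence and $\xi_\gamma$ is, by construction, the pointwise limit of the functions $w \mapsto d(\gamma(n), w) - d(\gamma(n), z)$, the condition $\xi_\gamma = \xi$ unwinds to: for every vertex $w$, $\xi(w) = \lim_n \big(d(\gamma(n), w) - d(\gamma(n), z)\big)$.

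The key observation is that, because $\hat{\G}$ is equipped with the combinatorial metric, these functions are integer-valued, so each horofunction is integer-valued and the limit above is actually attained, i.e.\ it is an eventual equality. I would therefore set
\[ \tilde{M} = \{(\eta, \xi, \gamma) \in \partial \hat{\G} \times C_{hb}(\hat{\G}) \times G^{\mathbb{N}} : (\eta, z, \gamma) \in R \text{ and } \forall w\, \exists N\, \forall n \geq N\ d(\gamma(n), w) - d(\gamma(n), z) = \xi(w)\}. \]
By Claim \ref{4.3}, the requirement $(\eta, z, \gamma) \in R$ defines a closed set. For fixed $w$ and $n$, the equation $d(\gamma(n), w) - d(\gamma(n), z) = \xi(w)$ defines a closed set, since $\gamma \mapsto d(\gamma(n), w) - d(\gamma(n), z)$ depends only on the discrete coordinate $\gamma(n)$ and $\xi \mapsto \xi(w)$ is continuous for the pointwise-convergence topology. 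Hence the quantifier block $\forall w\, \exists N\, \forall n \geq N$, being a countable intersection over $w \in G$ of $F_\sigma$ sets, is Borel, and so $\tilde{M}$ is Borel.

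It remains to project. Since $M$ is the image of $\tilde{M}$ under the projection to $\partial \hat{\G} \times C_{hb}(\hat{\G})$, and since each fibre $\tilde{M}_{(\eta, \xi)} = \{\gamma \in \text{CGR}(z, \eta) : \xi_\gamma = \xi\}$ is a closed subset of the compact set $\text{CGR}(z, \eta)$ (Claim \ref{4.2}), hence compact, \cite[Theorem 18.18]{Kech95} applies and shows that $M$ is Borel. The one step that needs genuine care is the passage from $\xi_\gamma = \xi$ to the displayed Borel condition: this is where the integer-valuedness of horofunctions on $\hat{\G}$ is used to turn a limit into an eventual equality, and where the continuity of the evaluation maps on $C_{hb}(\hat{\G})$ must be checked.
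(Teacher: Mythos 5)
Your strategy is genuinely different from the paper's: the paper shows $M$ is both analytic and coanalytic (the latter via a rather delicate formula with a universal quantifier over $G^{\mathbb{N}}$ and finite geodesic approximations recovered by K\H{o}nig's lemma) and invokes Suslin's theorem, whereas you realize $M$ as a small-section projection and invoke \cite[Theorem 18.18]{Kech95}. Your reduction of $\xi_\gamma = \xi$ to an eventual equality is fine (it is essentially the same Borel description the paper uses for its analytic half), and fixing the basepoint $z$ is legitimate by the basepoint-independence of $\Xi(\eta)$; indeed this is exactly what makes a compactness argument conceivable. However, there is a genuine gap at the one step you did not flag: the assertion that the section $\tilde{M}_{(\eta,\xi)} = \{\gamma \in \mathrm{CGR}(z,\eta) : \xi_\gamma = \xi\}$ is \emph{closed} in $\mathrm{CGR}(z,\eta)$. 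The defining condition has the quantifier shape $\forall w\, \exists N\, \forall n \geq N$, which a priori yields only a $G_\delta$ subset of $\mathrm{CGR}(z,\eta)$, and a $G_\delta$ subset of a compact metric space need not be $\sigma$-compact (e.g.\ the irrationals in $[0,1]$), so \cite[Theorem 18.18]{Kech95} does not apply without further argument. Concretely, since $n \mapsto d(\gamma(n),w) - n$ is non-increasing along a CGR based at $z$, the map $\gamma \mapsto \xi_\gamma(w)$ is upper semicontinuous on $\mathrm{CGR}(z,\eta)$, but lower semicontinuity --- hence closedness of the fibre --- is not evident: a pointwise limit $\gamma$ of rays $\gamma_k$ with $\xi_{\gamma_k} = \xi$ could in principle satisfy $\xi_\gamma \geq \xi$ with $\xi_\gamma \neq \xi$, because the infimum defining $\xi_{\gamma_k}(w)$ may be attained at times $n_k \to \infty$ where $\gamma_k$ no longer agrees with $\gamma$.

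The gap is repairable within your framework, and the repair makes your route arguably cleaner than the paper's coanalyticity computation. By \cite[Proposition 5.2]{MS20} together with Theorem \ref{3.1}, $\Xi(\eta)$ is finite, so the sets $A_{\xi'} = \{\gamma \in \mathrm{CGR}(z,\eta) : \xi_\gamma = \xi'\}$ for $\xi' \in \Xi(\eta)$ form a \emph{finite partition} of the compact set $\mathrm{CGR}(z,\eta)$, and each $A_{\xi'}$ is $G_\delta$ by the computation you sketch (for each $w$, the condition $\inf_n (d(\gamma(n),w)-n) = \xi'(w)$ is the intersection of a closed set, $\forall n:\ d(\gamma(n),w)-n \geq \xi'(w)$, with an open set, $\exists n:\ d(\gamma(n),w)-n \leq \xi'(w)$). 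Hence each $A_\xi$ is also the complement in $\mathrm{CGR}(z,\eta)$ of a finite union of $G_\delta$ sets, i.e.\ a finite intersection of $F_\sigma$ sets, hence $F_\sigma$, hence $K_\sigma$ in the compact space $\mathrm{CGR}(z,\eta)$ (sections over $\xi \notin \Xi(\eta)$ being empty). Since \cite[Theorem 18.18]{Kech95} applies to Borel sets with $\sigma$-compact sections, your projection argument then goes through. As written, though, the compactness claim for the sections is unjustified, and it is the crux of the proof rather than the points you singled out as delicate.
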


\begin{proof}
	We follow a similar proof to the proof of \cite[Claim 4]{MS20}. We will show that $M$ is both analytic and coanalytic, hence Borel by \cite[Theorem 14.11]{Kech95}. By definition of $\Xi(\eta)$, we have that $(\eta, \xi) \in M$ if and only if
	
	\begin{equation*}
		\exists \gamma \in G^{\N} : (\eta, \gamma(0), \gamma) \in R \text{ and } \xi_{\gamma} = \xi
	\end{equation*}

	We also have that 
	
	\begin{equation*}
		\xi_{\gamma} = \xi \iff \forall g \in G\text{ }\exists n \in \N \text{ }\forall m \geq n \text{ }f_{\gamma(m)}(g) = \xi(g)
	\end{equation*}
	
	which gives a Borel definition of the set $\{(\xi, \gamma) \in C_{hb}(\hat{\G}) \times C : \xi_{\gamma} = \xi \}$. Thus, from Claim \ref{4.3} we have that $M$ is analytic. To show that $M$ is coanalytic, we will show the following, denoting $N_k(A)$ the $k$-neighbourhood of a subset $A$ of $G$:
	
	\begin{align*}
		&(\eta, \xi) \in M \text{ if and only if } \forall \lambda \in G^{\mathbb{N}} \text{ if } (\eta, e, \lambda) \in R, \text{ then } \forall k \in \mathbb{N}, \exists \gamma^k \in G^{k+1}\text{ a geodesic path with  } \\
		&\gamma^k(0) = e \text{ such that } \gamma^{k}\subseteq N_{2 \nu} (\lambda) \text{ and such that } \forall g \in G, \exists n_g \in \mathbb{N} \text{ such that } \forall i,j > n_g, f_{\gamma^j(i)}(g) = \xi(g)
	\end{align*}
	
	This formula defines a coanalytic set since there is a single universal quantifier $\forall$ ranging over an uncountable standard Borel space $G^{\N}$. 
	\vs
	
	For the forward direction, if $(\eta, \xi) \in M$, then there exists $\gamma \in \text{CGR}(e, \eta)$ converging to $\xi$. We simply take $\gamma^k = \gamma \vert_{k}$ (the restriction of $\gamma$ from 0 to $k$) for each $k \in \mathbb{N}$. Then for each $\lambda \in \text{CGR}(e, \eta)$, we have $d(\gamma(n), \lambda(n)) \leq 2 \nu$ for each $n \in \mathbb{N}$, so $\gamma^k \subseteq N_{2 \nu}(\lambda)$ for each $k$. Furthermore, since $\gamma$ converges to $\xi$, we have that for all $\forall g \in G$, there exists $n_g$ such that for all $i,j > n_g$, we have $f_{\gamma^j(i)}(g) = \xi(g)$. 
	
	\vs 
	
	For the reverse direction, let $\lambda \in \text{CGR}(e, \eta)$. Then there exists a sequence $\gamma^k \in G^{k+1}$ of geodesic paths starting at $e$, each contained in $N_{2 \nu}(\lambda)$ and such that $f_{\gamma^i(j)}(g) \to \xi(g)$. For each $i$, fix $k = i + 3 \nu + 1$ and using $\nu$-hyperbolicity, choose an $N$ sufficiently large such that for all $n \geq N$, we have $$d(\gamma_n(t), \lambda(t)) \leq 2 \nu$$ for all $t \leq k$. Arguing as in the proof of Theorem \ref{3.1}, we have that $\{\gamma^j(i): j \geq N\}$ is finite, so that $\{\gamma^j(i) : j \in \N\}$ is finite for each $i$. Therefore, by Kőnig's lemma, $(\gamma^k)_k$ has a subsequence converging to some CGR $\gamma$ based at $e$, and $\gamma \subseteq N_{2 \nu}(\lambda)$, so $\gamma \in \text{CGR}(e, \eta)$. From $f_{\gamma^i(j)}(g) \to \xi(g)$ as $i,j \to \infty$, we have that $\xi_{\gamma} = \xi$.  Since $\gamma \in \text{CGR}(e, \eta)$, we conclude that $(\eta, \xi) \in M$. 
	
\end{proof}

By \cite[Proposition 5.2]{MS20}, for each $\eta \in \partial \hat{\G}$, we have that the section $M_{\eta} =\Xi(\eta)$ is finite, having cardinality bounded above by the constant $B$ from Theorem \ref{3.1}. Since $M$ is Borel and has finite sections of size at most $B$, by the Lusin-Novikov theorem we have Borel functions $\xi_1,..., \xi_B : \partial \hat{\G} \to C_{hb}(\hat{\G})$ such that $M$ is the union of the graphs $G_{\xi_i} = \{(\eta, \xi_i(\eta)) : \eta \in \partial \hat{\G}\}$ of the $\xi_i$. 

\begin{clm}
	\label{4.6}
	
	For each $i =1,...,B$, $Q_i = \{(\eta, g, h) \in \partial \hat{\G} \times G^2 : h \in Q(g, \xi_i(\eta))\}$ is Borel in $\partial \hat{\G} \times G^2$. 
	
\end{clm}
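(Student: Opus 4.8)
The plan is to show that $Q_i$ is both analytic and coanalytic and then conclude that it is Borel by Suslin's theorem \cite[Theorem 14.11]{Kech95}, exactly as in the proof of Claim \ref{4.5}. The guiding observation is that ``$h \in Q(g,\xi_i(\eta))$'' means there is a CGR $\gamma \in \text{CGR}(g,\eta)$ passing through $h$ with $\xi_\gamma = \xi_i(\eta)$; the single existential quantifier over $\gamma \in G^{\N}$ makes this manifestly analytic, and the work lies in producing an equivalent coanalytic formula. Throughout I would use that $G$ is countable, so that quantifiers ranging over $G$ or over $G^{<\N}$ preserve the projective class, and the only ``expensive'' quantifier is one ranging over $G^{\N}$.

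For the analytic direction I would set
\begin{equation*}
\tilde{Q}_i = \{(\eta, g, h, \gamma) \in \partial\hat{\G} \times G^2 \times G^{\N} : (\eta, g, \gamma) \in R,\ \xi_\gamma = \xi_i(\eta),\ \gamma(d(g,h)) = h\}.
\end{equation*}
This set is Borel: $R$ is closed by Claim \ref{4.3}, the relation $\{(\xi,\gamma) : \xi_\gamma = \xi\}$ is Borel (as established in Claim \ref{4.5}) and $\xi_i$ is Borel, so $\{(\eta,\gamma) : \xi_\gamma = \xi_i(\eta)\}$ is Borel, while the last condition is clopen. Since a geodesic based at $g$ meets $h$ only at the index $d(g,h)$, the condition $\gamma(d(g,h)) = h$ expresses $h \in \gamma$. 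Then $Q_i$ is the projection of $\tilde{Q}_i$ to $\partial\hat{\G}\times G^2$, hence analytic. I would also record here the auxiliary Borel fact that $\{(\eta,g,h) : h \in \text{Geo}(g,\eta)\}$ is Borel: it is the projection of the Borel set $\{(\eta,g,h,\gamma) : (\eta,g,\gamma)\in R,\ \gamma(d(g,h)) = h\}$, whose $\gamma$-sections are clopen subsets of the compact set $\text{CGR}(g,\eta)$ (Claim \ref{4.2}), hence compact, so \cite[Theorem 18.18]{Kech95} applies.

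For coanalyticity I would mirror the reverse characterization from Claim \ref{4.5}, anchoring the fellow-travelling geodesic paths at $h$. Concretely, I claim $(\eta,g,h) \in Q_i$ if and only if $h \in \text{Geo}(g,\eta)$ and for every $\lambda \in G^{\N}$, if $\lambda \in \text{CGR}(g,\eta)$ with $\lambda(d(g,h)) = h$, then for every $k$ there is a geodesic path $\gamma^k \in G^{k+1}$ with $\gamma^k(0) = g$, $\gamma^k(d(g,h)) = h$ and $\gamma^k \subseteq N_{2\nu}(\lambda)$, such that for every $p \in G$ there is $n_p$ with $f_{\gamma^j(i)}(p) = \xi_i(\eta)(p)$ for all $i,j > n_p$. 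Since every quantifier here is countable except the single $\forall \lambda$ over $G^{\N}$, this formula is coanalytic; the conjunct $h \in \text{Geo}(g,\eta)$ is needed so that the vacuous case (no $\lambda$ passing through $h$) is not wrongly counted. The forward implication is immediate: a witnessing CGR $\gamma$ restricts to the paths $\gamma^k = \gamma\vert_k$, which pass through $h$ and, being a CGR to $\eta$, lie $2\nu$-close to any $\lambda \in \text{CGR}(g,\eta)$.

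The reverse implication is the step I expect to be the main obstacle. Fixing one $\lambda \in \text{CGR}(g,\eta)$ through $h$ (which exists because $h \in \text{Geo}(g,\eta)$), I obtain geodesic paths $\gamma^k$ through $h$, contained in $N_{2\nu}(\lambda)$, whose endpoint horofunctions converge to $\xi_i(\eta)$. The point is then to reconstruct a genuine CGR from these finite paths: arguing as in Theorem \ref{3.1}, the set $\bigcup_k \gamma^k \cup \lambda$ is uniformly locally finite, so by Kőnig's lemma a subsequence of $(\gamma^k)_k$ converges to a CGR $\gamma$ based at $g$. Because every $\gamma^k$ passes through $h$ and lies in $N_{2\nu}(\lambda)$, the limit $\gamma$ passes through $h$ and fellow-travels $\lambda$, so $\gamma \in \text{CGR}(g,\eta)$; and from $f_{\gamma^j(i)}(p) \to \xi_i(\eta)(p)$ one reads off $\xi_\gamma = \xi_i(\eta)$, giving a CGR that witnesses $h \in Q(g,\xi_i(\eta))$. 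The delicate points are precisely that the limit CGR must be verified to pass through $h$ and to have horoboundary limit exactly $\xi_i(\eta)$, and not merely some point of $\Xi(\eta)$ — this is where uniform local finiteness from Theorem \ref{3.1} and the integrality of horofunction values enter. With both descriptions in hand, $Q_i$ is Borel by \cite[Theorem 14.11]{Kech95}.
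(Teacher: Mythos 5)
Your proof is correct, but it takes a genuinely different route from the paper. The paper's argument is essentially a two-line reduction to \cite[Lemma 4.2]{MS20}, which says that the sector $Q(g,\xi_i(\eta))$ equals $\bigcup_{n}\gamma(g,\lambda(n))$ (where $\gamma(x,y)$ denotes the union of all geodesic paths from $x$ to $y$) for \emph{some, equivalently any,} $\lambda\in\text{CGR}(g,\eta)$ converging to $\xi_i(\eta)$. Since the inner condition ``$\exists n:\ h\in\gamma(g,\lambda(n))$'' is clopen in $\lambda$ (it is just $d(g,h)+d(h,\lambda(n))=d(g,\lambda(n))$) and, by that lemma, holds for one admissible $\lambda$ iff it holds for all of them, the very same formula read with $\exists\lambda$ gives analyticity and read with $\forall\lambda$ gives coanalyticity, and Suslin's theorem finishes. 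You instead work straight from the definition of $Q$ and manufacture the coanalytic description by hand: finite geodesic approximants anchored at $g$ and forced through $h$, fellow-travelling an arbitrary $\lambda$, followed by a K\H{o}nig's-lemma extraction of a limit CGR --- in effect you rerun the coanalytic half of Claim \ref{4.5} with the extra constraint $\gamma^k(d(g,h))=h$. Both arguments are sound; yours is self-contained (it never needs the structural description of sectors from \cite[Lemma 4.2]{MS20}) and correctly flags the need for the conjunct $h\in\text{Geo}(g,\eta)$ to exclude the vacuously-true case, while the paper's is much shorter because the independence of the diamond description from the choice of $\lambda$ does all the $\exists/\forall$ work for free. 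Two cosmetic points in your write-up: the condition $\gamma^k(d(g,h))=h$ only makes sense for $k\ge d(g,h)$, and the index $i$ in $f_{\gamma^j(i)}$ clashes with the subscript of $\xi_i$; neither affects the argument.
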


\begin{proof}
	
	By \cite[Lemma 4.2]{MS20}, for $x,y \in G$, denoting $\gamma(x,y)$ the union of all geodesic paths in $\hat{\G}$ from $x$ to $y$, we have that $Q(g, \xi_i(\eta)) = \bigcup_{n \in \N} \gamma(g, x_n)$ for some, equivalently any, $(x_n)_n \in \text{CGR}(g, \eta)$ converging to $\xi_i(\eta)$. From this, we obtain that:
	
	\begin{equation*}
		h \in Q(g, \xi_i(\eta)) \iff \exists \lambda \in C \text{(resp. $\forall \lambda \in C$) : $\lambda(0) = g$ and $\xi_{\lambda} = \xi_i(\eta)$ and $\exists n \in \N$ : $h \in \gamma(g, \lambda(n)) $}
	\end{equation*}

	This yields the analyticity (from the $\exists$ above) and coanalyticity (from the $\forall$ above) of $Q_i$, hence Borelness of $Q_i$.
	
\end{proof}

\begin{clm}
	\label{4.7}
	
	The set $P = \{(\eta, h) \in \partial \hat{\G} \times G : h \in \hat{\G}_{s, \eta}\}$ is Borel in $\partial \hat{\G} \times G$. 
\end{clm}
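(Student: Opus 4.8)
The plan is to realise $P$ as the projection of a Borel set whose vertical fibres are compact, and then invoke the compact-section projection theorem already used in Claim \ref{4.4}.

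First, I would rewrite the defining condition of $\eta$-specialness (Definition \ref{2.5}) in terms of the Borel data assembled so far. Since the Lusin--Novikov functions $\xi_1, \dots, \xi_B$ satisfy $\Xi(\eta) = \{\xi_1(\eta), \dots, \xi_B(\eta)\}$ for every $\eta \in \partial\hat{\G}$, we have $\bigcap_{\xi \in \Xi(\eta)} Q(h, \xi) = \bigcap_{i=1}^{B} Q(h, \xi_i(\eta))$. By Definition \ref{2.5} a vertex $h$ is $\eta$-special exactly when this finite intersection of sectors contains a ray of $\mathrm{CGR}(h,\eta)$ (the rays populating the sectors $Q(h,\xi)$ are, by Definition \ref{2.4}, based at $h$ and converge to $\eta$). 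Thus
\[
(\eta, h) \in P \iff \exists\, \gamma \in G^{\N} \ \bigl[\, (\eta, h, \gamma) \in R \ \wedge \ \forall i \le B \ \forall n \in \N \ \ (\eta, h, \gamma(n)) \in Q_i \,\bigr],
\]
with $R$ as in Claim \ref{4.3} and $Q_i$ as in Claim \ref{4.6}.

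Second, I would set
\[
\tilde P = \bigl\{ (\eta, h, \gamma) \in \partial\hat{\G} \times G \times G^{\N} : (\eta, h, \gamma) \in R \text{ and } (\eta, h, \gamma(n)) \in Q_i \text{ for all } i \le B,\ n \in \N \bigr\},
\]
so that $P$ is the projection of $\tilde P$ onto $\partial\hat{\G} \times G$. The set $\tilde P$ is Borel: $R$ is closed by Claim \ref{4.3}, each $Q_i$ is Borel by Claim \ref{4.6}, and for fixed $i$ and $n$ the evaluation map $(\eta, h, \gamma) \mapsto (\eta, h, \gamma(n))$ is continuous, so $\tilde P$ is a countable intersection of Borel sets.

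Third, and this is the crux, I would verify that the fibres of $\tilde P$ are compact. For fixed $(\eta, h)$ the fibre is $\{\gamma \in \mathrm{CGR}(h, \eta) : \gamma(n) \in \bigcap_{i} Q(h, \xi_i(\eta)) \text{ for all } n\}$; each condition ``$\gamma(n)$ lies in a fixed subset of $G$'' is clopen in $G^{\N}$, so the fibre is a closed subset of $\mathrm{CGR}(h, \eta)$, which is compact by Claim \ref{4.2} (this is exactly where the uniform local finiteness of $\mathrm{Geo}(h, \eta)$ from Theorem \ref{3.1} enters). Hence $\tilde P$ is Borel with compact vertical sections, and by \cite[Theorem 18.18]{Kech95} its projection $P$ is Borel, as required. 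The main obstacle is precisely this last passage: the naive description of $P$ only exhibits it as an analytic set (a projection along $G^{\N}$), and upgrading this to Borelness requires the compactness of the fibres $\mathrm{CGR}(h,\eta)$ --- i.e.\ the local finiteness supplied by Theorem \ref{3.1} --- so that the compact-section projection theorem applies. Care is also needed to confirm that the ray witnessing specialness may be taken in $\mathrm{CGR}(h,\eta)$ rather than being an arbitrary CGR contained in the intersection; with that confirmed, the fibres are genuinely compact and not merely $\sigma$-compact.
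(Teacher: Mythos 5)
Your proof is correct, but it packages the argument differently from the paper. The paper writes $(\eta,h)\in P$ as a formula with only countable quantifiers: for every $n$ there exists a finite geodesic segment $\gamma^n\in G^{n+1}$ with $(\eta,h,\gamma^n)\in F$ (the Borel set of initial segments of CGRs from Claim \ref{4.4}) all of whose vertices lie in every $Q_i$; since $G^{n+1}$ is countable, Borelness is immediate, and the mathematical work goes into the converse direction of the equivalence, where local finiteness of $\mathrm{Geo}(h,\eta)$ and K\H{o}nig's lemma produce a limiting CGR witnessing specialness. You instead quantify existentially over the uncountable space $G^{\N}$, which a priori only gives analyticity, and then recover Borelness by checking that the fibres of your set $\tilde P$ are closed subsets of the compact set $\mathrm{CGR}(h,\eta)$ and applying \cite[Theorem 18.18]{Kech95}, exactly as in Claim \ref{4.4}. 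The two routes are equivalent in substance: the compactness of $\mathrm{CGR}(h,\eta)$ in Claim \ref{4.2} is itself proved by K\H{o}nig's lemma from Theorem \ref{3.1}, so your projection argument simply absorbs the paper's limit-extraction step into a citation of Claim \ref{4.2}; the trade-off is a cleaner logical form at the cost of invoking the Arsenin--Kunugui-type projection theorem where the paper needs only closure of Borel sets under countable Boolean operations. The caveat you raise --- that the CGR witnessing $\eta$-specialness can be taken in $\mathrm{CGR}(h,\eta)$ --- is genuine, but the paper's own proof makes the identical assumption in its forward direction, so you are not introducing any gap beyond what the paper already relies on (this is supplied by the results of Section 4 of \cite{MS20}).
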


\begin{proof}
	
	We have that $h \in \hat{\G}_{s, \eta}$ if and only if: 
	
	\begin{equation*}
		\forall n \in \N, \exists \gamma^n \in G^{n+1} : (\eta, h, \gamma^n) \in F \text{ and } \forall i \leq B \text{ }\forall k < n, (\eta, h, \gamma^n(k)) \in Q_i
	\end{equation*}
	
	Indeed, if $h \in \hat{\G}_{s, \eta}$, then $\bigcap_{\xi \in \Xi(\eta)} Q(h, \xi)$ contains a CGR $\gamma \in \text{CGR}(h, \eta)$, so we can take $\gamma^n = \gamma \vert_{n}$ (the restriction from 0 to $n$) for all $n \in \N$ to satisfy the above condition.
	
	\vs 
	
	Conversely, if the above condition holds, then by local finiteness of $\text{Geo}(h, \eta)$, the sequence $(\gamma^n)_{n \in \N}$ with $(\eta, h, \gamma^n) \in F$ will have a subsequence converging to some $\gamma \in \text{CGR}(h, \eta)$ and the above condition yields that $\gamma \subseteq \bigcap_{\xi \in \Xi(\eta)} Q(h, \xi)$, so that $h \in \hat{\G}_{s, \eta}$. 
	
	\vs 
	
	Since $F$ and $Q_i$ are Borel, we conclude that $P$ is Borel.
	
\end{proof}

\begin{clm}
	\label{4.8}
	
	The set $P_1 = \{(\xi, \eta, h) \in C_{hb}(\hat{\G}) \times \partial \hat{\G} \times G : h \in \hat{\G}_{s, \eta} \text{ and } \xi = \xi_{h, \eta}\}$ is Borel in $C_{hb}(\hat{\G}) \times \partial \hat{\G} \times G$. 
\end{clm}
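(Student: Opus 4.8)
The plan is to reduce the Borelness of $P_1$ to the Borel sets already constructed, namely $P$ (Claim \ref{4.7}), the $Q_i$ (Claim \ref{4.6}) and $M$ (Claim \ref{4.5}), together with the Lusin--Novikov uniformizing functions $\xi_1, \dots, \xi_B : \partial\hat{\G} \to C_{hb}(\hat{\G})$. The one piece of genuine content is a combinatorial characterization of the special direction $\xi_{h,\eta}$. For $h \in \hat{\G}_{s,\eta}$, recall that by \cite[Lemma 4.7]{MS20} the point $\xi_{h,\eta}$ is the \emph{unique} $\xi' \in \Xi(\eta)$ with $Q(h,\xi') = \bigcap_{\xi \in \Xi(\eta)} Q(h,\xi)$. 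I would first observe that this is equivalent to the inclusion condition: $\xi' \in \Xi(\eta)$ and $Q(h,\xi') \subseteq Q(h,\xi'')$ for every $\xi'' \in \Xi(\eta)$. Indeed, since $\xi' \in \Xi(\eta)$ one always has $\bigcap_{\xi''} Q(h,\xi'') \subseteq Q(h,\xi')$, so the inclusion condition forces $Q(h,\xi') = \bigcap_{\xi''} Q(h,\xi'')$, and uniqueness in \cite[Lemma 4.7]{MS20} then gives $\xi' = \xi_{h,\eta}$.

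Next I would translate this characterization into the coordinates supplied by the uniformizing functions. Since $\Xi(\eta) = \{\xi_1(\eta), \dots, \xi_B(\eta)\}$, membership $(\xi,\eta,h) \in P_1$ becomes: $(\eta,h) \in P$ together with the existence of some $i \leq B$ such that $\xi = \xi_i(\eta)$ and, for every $j \leq B$, the sector inclusion $Q(h,\xi_i(\eta)) \subseteq Q(h,\xi_j(\eta))$ holds. The sector inclusion is in turn a countable conjunction over the group: recalling that $Q_i = \{(\eta,g,h) : h \in Q(g,\xi_i(\eta))\}$, and that here the sector is based at $h$ (so $h$ plays the role of basepoint and $g$ ranges over candidate members), the inclusion is equivalent to $\forall g \in G : (\eta,h,g) \in Q_i \implies (\eta,h,g) \in Q_j$.

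Finally I would do the Borel bookkeeping. The set $\{(\xi,\eta) : \xi = \xi_i(\eta)\}$ is Borel as the graph of the Borel function $\xi_i$; $P$ and each $Q_i$ are Borel by Claims \ref{4.7} and \ref{4.6}; each sector inclusion is a countable intersection over $g \in G$ of Borel conditions; and the full description of $P_1$ is a finite disjunction over $i \leq B$ of finite conjunctions over $j \leq B$ of such sets, all intersected with $P$. As Borel sets are closed under these operations, $P_1$ is Borel.

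I expect the only delicate step to be the characterization in the first paragraph --- verifying that the "$\forall \xi''$" inclusion condition singles out exactly $\xi_{h,\eta}$ and nothing else. Once this is pinned down via \cite[Lemma 4.7]{MS20}, the remaining translation is routine. I would also remark that the hypothesis $(\eta,h) \in P$ is harmless but in fact redundant: if some $\xi' \in \Xi(\eta)$ is contained in all the other sectors, then $Q(h,\xi') = \bigcap_{\xi''} Q(h,\xi'')$ already contains a CGR from $h$ in direction $\eta$ (such a CGR exists by basepoint-independence of $\Xi(\eta)$), so $h$ is automatically $\eta$-special; keeping $P$ merely makes the correspondence with Definition \ref{2.5} transparent.
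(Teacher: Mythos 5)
Your proof is correct and follows essentially the same route as the paper: the paper characterizes $P_1$ by exactly the formula $(\eta,h)\in P$ and $\exists i\leq B$ with $(\eta,\xi)\in G_{\xi_i}$ and $\forall j\leq B$, $Q(h,\xi_i(\eta))\subseteq Q(h,\xi_j(\eta))$, then cites Claims \ref{4.5}--\ref{4.7}. Your additional justification of why the inclusion condition pins down $\xi_{h,\eta}$ via the uniqueness in \cite[Lemma 4.7]{MS20}, and your unpacking of the sector inclusion as a countable intersection over $g\in G$ of conditions on the $Q_i$, are details the paper leaves implicit but are accurate.
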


\begin{proof}
	
	We have $(\xi, \eta, h) \in P_1$ if and only if: 
	
	\begin{equation*}
		(\eta, h) \in P \text{ and $\exists i \leq B$ : $(\eta, \xi) \in G_{\xi_i}$ and $\forall j \leq B$, $Q(h, \xi_i(\eta)) \subseteq Q(h, \xi_j(\eta)) $}
	\end{equation*}

	Since $P$ is Borel (Claim \ref{4.7}), $G_{\xi_i}$ is Borel (as $\xi_i$ is Borel), and $Q_i$ is Borel (Claim \ref{4.6}), the above yields that $P_1$ is Borel.
	
\end{proof}

\begin{clm}
	\label{4.9}
	
	The set $L = \{(h, \xi, \eta) \in G \times C_{hb}(\hat{\G}) \times \partial \hat{\G} : h \in Y(e, \xi), \xi \in \Xi(\eta)\}$ is Borel in $G \times C_{hb}(\hat{\G}) \times \partial \hat{\G}$. 
\end{clm}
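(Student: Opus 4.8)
The plan is to unpack membership in $L$ into a finite list of conditions, each of which is Borel by one of the earlier claims, and to deal with the ``minimal distance'' clause of Definition \ref{2.7} as a countable conjunction indexed by the (countable) vertex set $G$.

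First I would rewrite the statement $h \in Y(e, \xi)$ with $\xi \in \Xi(\eta)$. By Definition \ref{2.7}, this is the conjunction of: (i) $h$ is $\eta$-special; (ii) $h \in \text{Geo}(e, \eta)$; (iii) $\xi_{h, \eta} = \xi$; and (iv) $d(e, h)$ is minimal among all $y \in G$ satisfying (i)--(iii). Condition (i) is precisely $(\eta, h) \in P$, Borel by Claim \ref{4.7}. For (ii) I would use that $\text{Geo}(e, \eta) = \bigcup_{\xi' \in \Xi(\eta)} Q(e, \xi')$, which holds because every CGR based at $e$ converging to $\eta$ limits to some element of $\Xi(\eta)$; thus $h \in \text{Geo}(e, \eta)$ if and only if $\exists i \leq B: (\eta, e, h) \in Q_i$, which is Borel by Claim \ref{4.6}. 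Condition (iii) is $(\xi, \eta, h) \in P_1$, Borel by Claim \ref{4.8}; note that, by \cite[Lemma 4.7]{MS20}, this already forces $\xi = \xi_{h,\eta} \in \Xi(\eta)$, so the constraint $\xi \in \Xi(\eta)$ in the definition of $L$ is automatically enforced (alternatively, one may conjoin $(\eta, \xi) \in M$, Borel by Claim \ref{4.5}).

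The one clause requiring a moment's thought is the minimality (iv), which a priori looks like a quantifier over vertices. The key observation is that this quantifier ranges over the countable set $G$, so it can be written as the countable conjunction
\begin{equation*}
	\forall y \in G : \big[(\eta, y) \in P \ \wedge\ (\exists i \leq B: (\eta, e, y) \in Q_i) \ \wedge\ (\xi, \eta, y) \in P_1\big] \implies d(e, h) \leq d(e, y).
\end{equation*}
For each fixed $y \in G$, the bracketed hypothesis defines a Borel subset of $(h,\xi,\eta)$-space by Claims \ref{4.7}, \ref{4.6} and \ref{4.8}, while the conclusion $d(e,h) \leq d(e,y)$ is clopen since $G$ is discrete; hence each implication is Borel, and the intersection over the countable index set $G$ is Borel.

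Combining (i)--(iv), membership in $L$ is a finite conjunction of Borel conditions, so $L$ is Borel. I do not expect a genuine obstacle here: the only subtlety is recognizing that the minimal-distance requirement is a quantifier over the countable group $G$ rather than over an uncountable space, so it stays Borel without any analytic/coanalytic detour, after which everything reduces to the Borelness of $P$, $Q_i$, and $P_1$ already established in Claims \ref{4.7}, \ref{4.6}, and \ref{4.8}.
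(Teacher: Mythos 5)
Your proposal is correct and follows essentially the same route as the paper: both express membership in $L$ as the conjunction of $(\xi,\eta,h)\in P_1$, $h\in\mathrm{Geo}(e,\eta)$ (via $(\eta,e,h)\in Q_i$ for some $i\leq B$), and minimality of $d(e,h)$ among such vertices, invoking Claims \ref{4.5}, \ref{4.6} and \ref{4.8}. You merely spell out the minimality clause as an explicit countable conjunction over $G$, which the paper leaves implicit.
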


\begin{proof}
	
	We have that $(h, \xi, \eta) \in L$ if and only if $(\eta, \xi) \in M $ and $h$ is the closest element to $e$ (in the metric $d$) such that $h \in \text{Geo}(e, \eta)$ and $(\xi, \eta, h) \in P_1 $. Thus, by Claims \ref{4.5}, \ref{4.6}, \ref{4.8}, $L$ is Borel (note that $h \in \text{Geo}(e, \eta) \iff (\eta, e, h) \in Q_i$ for some $i \leq B$, so $\{(h,\eta) \in G \times \partial \hat{\G} : h \in \text{Geo}(e, \eta)\}$ is Borel in $G \times \partial \hat{\G}$ by Claim \ref{4.6}). 
	
\end{proof}

\begin{clm}
	\label{4.10}
	
	The set $B = \{(g, h, \xi, \eta) \in G^2 \times C_{hb}(\hat{\G}) \times \partial \hat{\G} : g \in Q(h, \xi), h \in Y(e, \xi), \xi \in \Xi(\eta)\}$ is Borel in $G^2 \times C_{hb}(\hat{\G}) \times \partial \hat{\G}$.
\end{clm}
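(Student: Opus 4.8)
The plan is to write $B$ as a Boolean combination of Borel sets already at hand: the set $L$ from Claim \ref{4.9}, the graphs $G_{\xi_i}$ of the Lusin--Novikov selectors $\xi_i$, and the sets $Q_i$ from Claim \ref{4.6}. The three defining conditions of $B$ are $g \in Q(h, \xi)$, $h \in Y(e, \xi)$ and $\xi \in \Xi(\eta)$. The latter two are captured exactly by membership $(h, \xi, \eta) \in L$, which is Borel by Claim \ref{4.9}, so the only genuinely new condition is $g \in Q(h, \xi)$. The obstacle here is that $Q(h, \xi)$ was given a Borel description in Claim \ref{4.6} only for $\xi$ of the special form $\xi_i(\eta)$, whereas in $B$ the point $\xi$ ranges freely over $C_{hb}(\hat{\G})$.

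First I would use the constraint $\xi \in \Xi(\eta)$ to remove this obstacle. Since each section $M_\eta = \Xi(\eta)$ has at most $B$ elements and $M = \bigcup_{i=1}^B G_{\xi_i}$, every pair $(\eta, \xi) \in M$ satisfies $\xi = \xi_i(\eta)$ for some $i \leq B$. Hence, for $(h, \xi, \eta) \in L$, the condition $g \in Q(h, \xi)$ is equivalent to the existence of some $i \leq B$ with $(\eta, \xi) \in G_{\xi_i}$ and $g \in Q(h, \xi_i(\eta))$. Recalling that $Q_i = \{(\eta, a, b) : b \in Q(a, \xi_i(\eta))\}$, the clause $g \in Q(h, \xi_i(\eta))$ is precisely $(\eta, h, g) \in Q_i$, where the two group coordinates of $Q_i$ are filled by $h$ and $g$ in that order.

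Combining these observations, I would characterize $B$ by
\begin{equation*}
(g, h, \xi, \eta) \in B \iff (h, \xi, \eta) \in L \text{ and } \exists i \leq B : (\eta, \xi) \in G_{\xi_i} \text{ and } (\eta, h, g) \in Q_i.
\end{equation*}
Since $L$ is Borel (Claim \ref{4.9}), each $G_{\xi_i}$ is Borel (as each $\xi_i$ is a Borel map), and each $Q_i$ is Borel (Claim \ref{4.6}), the right-hand side is a finite union over $i \leq B$ of finite intersections of Borel sets pulled back along the coordinate projections, so $B$ is Borel. The only point demanding care — and the main, though minor, obstacle — is the bookkeeping of coordinate orders: matching the free variable $\xi$ to the selector $\xi_i(\eta)$ through $G_{\xi_i}$ and feeding $h$ and $g$ into the correct slots of $Q_i$. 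There is no new descriptive-set-theoretic content beyond reusing the earlier Borelness results.
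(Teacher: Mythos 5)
Your characterization of $B$ is exactly the one the paper uses (the paper writes $\xi = \xi_i(\eta)$ where you write $(\eta,\xi)\in G_{\xi_i}$, and it likewise feeds $(\eta,h,g)$ into $Q_i$ and conjoins $(h,\xi,\eta)\in L$), so the proposal is correct and takes essentially the same approach.
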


\begin{proof}
	
	We have that $(g,h,\xi,\eta) \in B$ if and only if $\exists i \leq r$ such that $\xi = \xi_i(\eta)$ and $(\eta, h, g) \in Q_i$ and $(h, \xi, \eta) \in L$. Since $L, Q_i$ and $\xi_i$ are Borel, it follows that $B$ is Borel.
	
\end{proof}

\begin{clm} 
	\label{4.11}
	
	The set $A = \{(\eta, g) \in \partial \hat{\G} \times G : g \in \text{Geo}_1(e, \eta)\}$ is Borel in $\partial \hat{\G} \times G$. 
	
\end{clm}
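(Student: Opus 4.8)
The plan is to obtain $A$ by existentially quantifying the coordinates $h$ and $\xi$ out of the Borel set of Claim \ref{4.10}. By Definition \ref{2.7}, we have $g \in \text{Geo}_1(e, \eta)$ if and only if there exist $\xi \in \Xi(\eta)$ and $h \in Y(e, \xi)$ with $g \in Q(h, \xi)$; this is exactly the statement that $(g, h, \xi, \eta)$ lies in the set $B$ of Claim \ref{4.10} for some $h \in G$ and some $\xi \in C_{hb}(\hat{\G})$. Thus, up to reordering coordinates, $A$ is the projection of $B$ forgetting $h$ and $\xi$.

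The only subtlety is that a direct projection would leave $A$ merely analytic, because of the uncountable existential quantifier over $\xi \in C_{hb}(\hat{\G})$. First I would remove this quantifier using that the limit set is finite and Borel-parametrized: by the Lusin--Novikov uniformization preceding Claim \ref{4.6}, we have $\Xi(\eta) = \{\xi_1(\eta), \dots, \xi_B(\eta)\}$ for Borel functions $\xi_i : \partial \hat{\G} \to C_{hb}(\hat{\G})$, where $B$ is the bound from Theorem \ref{3.1}. Hence the existential over $\xi \in \Xi(\eta)$ can be replaced by the finite disjunction over $i \le B$ together with the substitution $\xi = \xi_i(\eta)$.

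Concretely, I would record the equivalence
\begin{equation*}
	(\eta, g) \in A \iff \exists\, i \le B \ \exists\, h \in G : (g, h, \xi_i(\eta), \eta) \in B,
\end{equation*}
where $B$ on the right denotes the Borel set of Claim \ref{4.10}. For each fixed $i$ and $h$, the map $(\eta, g) \mapsto (g, h, \xi_i(\eta), \eta)$ is Borel (since $\xi_i$ is Borel), so its preimage of the set of Claim \ref{4.10} is Borel. The existential over $h$ ranges over the countable group $G$ and so contributes only a countable union, while the existential over $i$ contributes a finite union; both operations preserve Borelness, and I conclude that $A$ is Borel.

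There is no genuine obstacle beyond the bookkeeping already carried out in Claims \ref{4.6}--\ref{4.10}. The single point worth emphasizing is the one addressed above: the finiteness of $\Xi(\eta)$ (Theorem \ref{3.1}, via \cite[Proposition 5.2]{MS20}) together with its Borel uniformization is precisely what upgrades the naive analytic projection to an honestly Borel set.
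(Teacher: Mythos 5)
Your proposal is correct and follows essentially the same route as the paper: both express $A$ as the projection of the Borel set $B$ of Claim \ref{4.10} forgetting the coordinates $h$ and $\xi$, and both rely on the finiteness of $\Xi(\eta)$ (and of $Y(e,\xi)$) to make that projection Borel. The only cosmetic difference is that you implement the projection via the Borel selectors $\xi_1,\dots,\xi_B$ together with the countability of $G$, turning it into a finite and countable union, whereas the paper applies the Lusin--Novikov theorem directly to $B$, whose $(\eta,g)$-sections are finite.
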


\begin{proof}
	
	Since $\text{Geo}_1(e, \eta) = \bigcup_{\xi \in \Xi(\eta)} \bigcup_{h \in Y(e, \xi)} Q(h, \xi)$, we have:
	
	\begin{equation*}
		(\eta, g) \in A \iff \exists \xi \in \Xi(\eta), \exists h \in Y(e, \xi) : g \in Q(h, \xi) \iff \exists \xi \in \Xi(\eta), \exists h \in Y(e, \xi) : (g,h,\xi, \eta) \in B
	\end{equation*}

 Therefore, $A$ is the projection $(g,h,\xi,\eta) \mapsto (\eta,g)$ of $B$ onto $\partial \hat{\G} \times G$. By Claim \ref{4.10}, $B$ is Borel. Also, the sections $\{(h, \xi) \in G \times C_{hb}(\hat{\G}) : h \in Y(e, \xi), \xi \in \Xi(\eta)\}$ of $B$ are finite by Theorem \ref{3.1} and \cite[Proposition 5.2]{MS20}. Therefore, by the Lusin-Novikov theorem, $A$ is Borel. 
	
\end{proof}

\begin{clm}
	\label{4.12}
	
	The set $D = \{(\eta, (\gamma(0), \gamma(1),...,\gamma(n) )) \in \partial \hat{\G}\times G^{< \mathbb{N}} : \gamma(0) \in Geo_1(e, \eta) \text{ and } \gamma \in CGR(\gamma(0), \eta)\}$ is Borel in $\partial \hat{\G} \times G^{< \mathbb{N}}$. 
	
\end{clm}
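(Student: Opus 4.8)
The plan is to express membership in $D$ as a conjunction of two conditions, each of which has already been shown to define a Borel set in an earlier claim. A pair $(\eta, (\gamma(0), \ldots, \gamma(n)))$ should lie in $D$ precisely when $\gamma(0) \in \text{Geo}_1(e, \eta)$ and the finite path $(\gamma(0), \ldots, \gamma(n))$ is an initial segment of some CGR based at $\gamma(0)$ converging to $\eta$. The first condition is exactly the statement that $(\eta, \gamma(0))$ belongs to the set $A$ of Claim \ref{4.11}, and the second is exactly the statement that $(\eta, \gamma(0), (\gamma(0), \ldots, \gamma(n)))$ belongs to the set $F$ of Claim \ref{4.4}. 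So the whole strategy is to realize $D$ as the intersection of two pullbacks of $A$ and $F$ along coordinate maps.

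To make this precise, I would introduce the two continuous coordinate maps $\psi \colon \partial \hat{\G} \times G^{<\N} \to \partial \hat{\G} \times G$ and $\phi \colon \partial \hat{\G} \times G^{<\N} \to \partial \hat{\G} \times G \times G^{<\N}$ defined by $\psi(\eta, (\gamma(0),\ldots,\gamma(n))) = (\eta, \gamma(0))$ and $\phi(\eta, (\gamma(0),\ldots,\gamma(n))) = (\eta, \gamma(0), (\gamma(0),\ldots,\gamma(n)))$. Since $G$ and $G^{<\N}$ carry their natural topologies (discrete on each fixed length) and both maps act as the identity on the $\partial \hat{\G}$ coordinate, $\psi$ and $\phi$ are continuous, hence Borel. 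One then has
\begin{equation*}
	D = \psi^{-1}(A) \cap \phi^{-1}(F).
\end{equation*}
Because $A$ is Borel by Claim \ref{4.11} and $F$ is Borel by Claim \ref{4.4}, the preimages $\psi^{-1}(A)$ and $\phi^{-1}(F)$ are Borel, and therefore so is their intersection $D$.

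There is essentially no genuine obstacle here, since all the work has been done in the preceding claims; the only point requiring a small verification is that the clause ``$\gamma \in CGR(\gamma(0), \eta)$'' appearing in the definition of $D$ agrees with the defining clause of $F$ once the separate basepoint variable $g$ of $F$ is identified with the first coordinate $\gamma(0)$ of the tuple. This is immediate, because any CGR based at $g$ has its $0$-th vertex equal to $g$, so the set $F$ already forces $\gamma(0) = g$; substituting $g = \gamma(0)$ is thus harmless, and the two clauses coincide.
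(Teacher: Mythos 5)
Your proposal is correct and matches the paper's own proof: the paper likewise characterizes $(\eta,(\gamma(0),\ldots,\gamma(n)))\in D$ by the conjunction $(\eta,\gamma(0),(\gamma(0),\ldots,\gamma(n)))\in F$ and $(\eta,\gamma(0))\in A$, then cites Claims \ref{4.4} and \ref{4.11}. Your explicit coordinate maps $\psi$ and $\phi$ just spell out the same pullback argument in slightly more detail.
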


\begin{proof}
	
	We have that $(\eta, (\gamma(0), \gamma(1),...,\gamma(n) )) \in D$ if and only if $(\eta, \gamma(0), (\gamma(0), \gamma(1),...,\gamma(n) )) \in F$ and $(\eta, \gamma(0)) \in A$. By Claim \ref{4.11}, $A$ is Borel in $\partial \hat{\G} \times G$. Also, $F$ is Borel by Claim \ref{4.4}. Therefore, $D$ is Borel. 
	
\end{proof}

\begin{clm} 
	\label{4.13}
	For each $n$, the set $S_n := \{(\eta, s^n)  \in \partial \hat{\G} \times (2^n)^n : s^n = s_n^{\eta}\}$ is Borel in $\partial \hat{\G} \times (2^n)^n$. 
\end{clm}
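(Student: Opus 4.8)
The plan is to reduce Borelness of $S_n$ to that of a single family of sets and then feed in the earlier claims. Since $(2^n)^n$ is finite and $<_n$ totally orders it, and $s_n^\eta$ is by definition the $<_n$-least element of $(2^n)^n$ that occurs infinitely often in $C^\eta$, it is enough to prove that for each fixed $s \in (2^n)^n$ the set
\[
I_s = \{\eta \in \partial\hat{\G} : s \text{ occurs infinitely often in } C^\eta\}
\]
is Borel. Indeed, $\{\eta : s_n^\eta = s\} = I_s \cap \bigcap_{s' <_n s}(\partial\hat{\G}\setminus I_{s'})$ is then a finite Boolean combination of Borel sets, and
\[
S_n = \bigcup_{s \in (2^n)^n}\bigl(\{\eta : s_n^\eta = s\}\times\{s\}\bigr)
\]
is a finite union over the finite set $(2^n)^n$, hence Borel.

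First I would establish that for each fixed $s$, the set $E_s = \{(\eta,g)\in\partial\hat{\G}\times G : (g,s)\in C^\eta\}$ is Borel. By definition $(g,s)\in C^\eta$ means that $g\in\text{Geo}_1(e,\eta)$ and there is some $\gamma\in CGR(g,\eta)$ with $\text{lab}(\gamma)\vert_n = s$. The condition $g\in\text{Geo}_1(e,\eta)$ is Borel in $(\eta,g)$ by Claim \ref{4.11}. For the remaining condition, the key point is that $\text{lab}(\gamma)\vert_n$ depends only on the finite prefix $(\gamma(0),\dots,\gamma(n))$: consecutive vertices $\gamma(i),\gamma(i+1)$ differ by a single generator $\gamma(i)^{-1}\gamma(i+1)\in S$, whose image under the fixed bijection $f$ lies in $2^{<\N}$, and $s$ is obtained by restricting the first $n$ of these codes to their first $n$ entries. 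Thus $\{\text{lab}(\gamma)\vert_n = s\}$ is a condition on the discrete coordinate $(\gamma(0),\dots,\gamma(n))\in G^{n+1}$ alone. Intersecting the Borel set $F$ from Claim \ref{4.4} (restricted to prefixes of length $n+1$) with this condition and with $\{\gamma(0)=g\}$, and then projecting away the coordinate $(\gamma(0),\dots,\gamma(n))$, which ranges over the countable set $G^{n+1}$ so that the projection is a countable union of Borel sections, shows that the existence of such a $\gamma$ is Borel in $(\eta,g)$. Hence $E_s$ is Borel.

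Finally I would pass from $E_s$ to $I_s$ using countability of $G$: by definition $\eta\in I_s$ iff the section $\{g : (\eta,g)\in E_s\}$ is infinite, and for each $k$ the set of $\eta$ whose section has at least $k$ elements is the countable union, over all $k$-tuples $(g_1,\dots,g_k)$ of distinct elements of $G$, of the Borel sets $\bigcap_{j=1}^k\{\eta : (\eta,g_j)\in E_s\}$, hence is Borel; intersecting over all $k$ gives that $I_s$ is Borel. (This is also immediate from the Lusin-Novikov theorem applied to $E_s$.) The one step requiring care is the middle paragraph: one must check that the label-restriction $\text{lab}(\gamma)\vert_n=s$ really is a condition on finitely many vertices, so that the existential over $CGR(g,\eta)$ collapses to a countable projection rather than an uncountable one. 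Once this finite dependence is made explicit, Borelness of $S_n$ follows from the earlier claims and routine descriptive set theory.
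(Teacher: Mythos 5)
Your proof is correct and follows essentially the same route as the paper: both collapse the existential over infinite geodesic rays to a countable projection over finite prefixes in $G^{n+1}$ (your combination of $F$ from Claim \ref{4.4} with $A$ from Claim \ref{4.11} is exactly the paper's set $D$ from Claim \ref{4.12}), and both then extract the $<_n$-minimal infinitely-occurring $s$ by a finite Boolean combination over the finite set $(2^n)^n$. The only difference is cosmetic: the paper encodes ``occurs infinitely often'' as ``occurs at basepoints of arbitrarily large distance from $e$,'' which needs local finiteness of $\mathrm{Geo}(e,\eta)$, whereas you count witnesses directly over the countable group $G$, which is equally valid and avoids that appeal.
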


\begin{proof}
	
	We have that $(\eta, s^n) \in S_n$ if and only if $s^n$ is the $<_n$-minimal element in $(2^n)^n$ for which the following holds:
	
	\begin{equation*}
		\forall m \in \mathbb{N}, \exists (\gamma(0), \gamma(1),...,\gamma(n) ) \in G^{n+1} : d(\gamma(0), e) \geq m, (\eta, (\gamma(0), \gamma(1),...,\gamma(n) )) \in D \text{ and $\text{lab}(\gamma)\vert_n= s^n $}
	\end{equation*}
	
	Note that the the "only if" holds by local finiteness of $\text{Geo}(e,\eta)$. Thus, $S_n$ is Borel by Claim \ref{4.12}. 
	
\end{proof}

Now let $E$ denote the orbit equivalence relation of the action of $G$ on $\partial \hat{\G}$. 

\begin{defn}
	\label{4.14}
	Let $Z = \{\eta \in \partial \hat{\G} : k_n^{\eta} \nrightarrow \infty\}$. 
\end{defn}

Since $\text{Geo}_1(e, \eta)$ is locally finite (as $\text{Geo}(e, \eta)$ is locally finite and $\text{Geo}_1(e, \eta) \subseteq \text{Geo}(e, \eta)$), we have that $Z$ is the set of all $\eta$ such that there exists $g_{\eta}$ belonging to $T_{n}^{\eta}$ for all $n$, i.e. for which there exists $\gamma^{\eta} \in CGR(g_{\eta}, \eta)$ with label $s^{\eta} \in (2^{< \N})^{\N}$. 

\begin{lem}
	\label{4.15}
	The map $\alpha:( Z, E \vert_Z) \to (\partial \hat{\G}, =) $ given by $\eta \mapsto g_{\eta}^{-1} \eta$ is a Borel reduction. 
\end{lem}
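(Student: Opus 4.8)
The plan is to verify the two defining requirements of a Borel reduction separately: that $\alpha$ is Borel, and that for $\eta, \eta' \in Z$ one has $(\eta, \eta') \in E$ if and only if $g_\eta^{-1}\eta = g_{\eta'}^{-1}\eta'$. The backward implication of the reduction property is immediate: if $g_\eta^{-1}\eta = g_{\eta'}^{-1}\eta'$ then $\eta' = (g_{\eta'}g_\eta^{-1})\eta$, so $\eta$ and $\eta'$ lie in the same $G$-orbit and $(\eta,\eta') \in E$. The content therefore lies in the forward implication and in Borelness.

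For the forward implication the key step is to show that the canonical label $s^\eta$ is $G$-invariant, i.e. $s^{h\eta} = s^\eta$ for every $h \in G$ and $\eta \in Z$. Here I would exploit that left translation by $h$ is a label-preserving automorphism of $\hat{\G}$: it carries $\text{CGR}(g, \eta)$ bijectively onto $\text{CGR}(hg, h\eta)$ without changing the coded label, and it carries $\text{Geo}_1(e, \eta)$ onto $\text{Geo}_1(h, h\eta)$. Combined with Theorem \ref{3.4}, which gives that $\text{Geo}_1(h, h\eta) \triangle \text{Geo}_1(e, h\eta)$ is finite, this shows that a string $s \in (2^n)^n$ occurs infinitely often in $C^\eta$ if and only if it occurs infinitely often in $C^{h\eta}$: translating by $h$ sends the infinitely many witnesses for $\eta$ to infinitely many base points $hg \in \text{Geo}_1(h, h\eta)$, all but finitely many of which lie in $\text{Geo}_1(e, h\eta)$, each still carrying a CGR to $h\eta$ with the same label prefix; the argument is symmetric under $h^{-1}$. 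Since $s_n^\eta$ and $s_n^{h\eta}$ are both defined as the $<_n$-minimal string occurring infinitely often, they coincide for every $n$, whence $s^\eta = s^{h\eta}$.

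Granting the invariance of the label, the forward implication follows from the uniqueness of a CGR with prescribed basepoint and label. If $\eta' = h\eta$, then $s^{\eta'} = s^\eta =: s$; translating the witnessing ray $\gamma^\eta \in \text{CGR}(g_\eta, \eta)$ by $g_\eta^{-1}$ yields a CGR based at $e$ with label $s$ converging to $g_\eta^{-1}\eta = \alpha(\eta)$, and likewise $g_{\eta'}^{-1}\gamma^{\eta'}$ is a CGR based at $e$ with label $s$ converging to $\alpha(\eta')$. Because a combinatorial geodesic ray is determined by its basepoint together with its sequence of edge labels, these two rays are literally equal, so their endpoints agree and $\alpha(\eta) = \alpha(\eta')$. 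For Borelness, I would first note that $Z$ is Borel: since $(k_n^\eta)_n$ is non-decreasing, $Z = \bigcup_M \bigcap_n \{\eta : k_n^\eta \leq M\}$, and each $\{\eta : k_n^\eta \leq M\}$ is Borel once $\eta \mapsto g_n^\eta$ is seen to be Borel. The latter follows from Claims \ref{4.11}, \ref{4.12} and \ref{4.13}: the relation $g \in T_n^\eta$ is Borel in $(\eta, g)$, being expressible through membership in $\text{Geo}_1(e, \eta)$, the Borel set $D$, and the Borel selection of $s_n^\eta$ from $S_n$; the sections $T_n^\eta \subseteq \text{Geo}(e,\eta)$ are finite by Theorem \ref{3.1}; and the $\leq$-minimum of a Borel relation with finite sections is Borel. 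Since $(g_n^\eta)_n$ is eventually constant on $Z$ with limit $g_\eta$, the graph $\{(\eta, g) : \exists N\ \forall n \geq N,\ g_n^\eta = g\}$ is Borel, so $\eta \mapsto g_\eta$ is Borel; composing with the continuous action map $(g, \eta) \mapsto g^{-1}\eta$ shows that $\alpha$ is Borel.

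I expect the main obstacle to be the $G$-invariance of the canonical label $s^\eta$. This is precisely the point at which local finiteness of the ambient graph is unavailable, and where Theorem \ref{3.4} (the finite symmetric difference of the $\text{Geo}_1$ bundles) must be invoked to transport the ``occurs infinitely often'' property across the $G$-action; everything else is either formal or a routine application of the Borel bookkeeping already assembled in the Claims of this section.
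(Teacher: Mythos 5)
Your proof is correct and follows essentially the same route as the paper: the heart of the argument is the $G$-invariance of $s^\eta$, obtained by combining label-preservation of left translation, the identity $h\,\mathrm{Geo}_1(e,\eta)=\mathrm{Geo}_1(h,h\eta)$, and the finite symmetric difference of $\mathrm{Geo}_1$ bundles, followed by the uniqueness of a CGR with prescribed basepoint and label. The only (harmless) divergence is in the Borelness bookkeeping, where you establish Borelness of $\eta\mapsto g_\eta$ directly from the Borelness of $T_n^\eta$ and eventual stabilization of $g_n^\eta$ on $Z$ (as the paper itself does in Lemma \ref{4.19}), rather than realizing the graph of $\alpha$ as a projection with singleton sections and applying Lusin--Novikov.
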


\begin{proof}
	We argue as in \cite{MS20}. First, let us show that $s_n^{\eta} = s_n^{g \eta}$ for each $g \in G$, each $\eta \in \partial \hat{\G}$ and each $n \in \N$. If there are infinitely many pairs $(h, s_n^{\eta}) \in C^{\eta}$, then since the left action of $G$ on $\hat{\G}$ preserves labels of geodesics, there are infinitely many pairs $(gh, s_n^{\eta})$, where $s_n^{\eta} = \text{lab}(\gamma)\vert_n$ for some $\gamma \in \text{CGR}(\gamma(0), g \eta)$ and where $\gamma(0) \in g \text{Geo}_1(e, \eta) = \text{Geo}_1(g, g \eta)$ (using \cite[Lemma 5.10]{MS20} in the last line). 
	
	\vs 
	
	By \cite[Theorem 5.9]{MS20}, the symmetric difference between $\text{Geo}_1(g, g \eta)$ and $\text{Geo}_1(e, g \eta)$ is finite and so there are infinitely many pairs $(gh, s_n^{\eta}) \in \text{Geo}_1(e, g \eta)$. Hence, there are infinitely many pairs $(gh, s_n^{\eta}) \in C^{g \eta}$. Thus, as $s_n^{\eta}$ is least in the order $<_n$ that appears infinitely often in $C^{\eta}$, we have that $s_n^{\eta} = s_n^{g \eta}$. As $s_n^{\eta} = s_n^{g \eta}$ for each $n$, we have $s^{\eta} = s^{g \eta}$. 
	
	\vs 
	
	This implies that $\alpha$ is constant on $G$-orbits. Indeed, suppose $\theta = g \eta$ for some $g \in G$, $\eta, \theta \in Z$. We have that $\alpha$ maps the boundary point $[\gamma^{\theta}]$ to the boundary point $[g_{\theta}^{-1} \gamma^{\theta}]$. Note that $g_{\theta}^{-1} \gamma^{\theta} \in \text{CGR}(e, g_{\theta}^{-1}\theta)$ and $\text{lab}(g_{\theta}^{-1}\gamma^{\theta}) = s^{\theta}$, because $\gamma^{\theta}$ has label $s^{\theta}$ and left multiplication preserves labels of geodesics. On the other hand, $\alpha$ maps $\eta = [\gamma^{\eta}]$ to $g_{\eta}^{-1} \eta = [g_{\eta}^{-1} \gamma^{\eta}]$. We have that $g_{\eta}^{-1} \gamma^{\eta} \in \text{CGR}(e, g_{\eta}^{-1}\eta)$ and $\text{lab}(g_{\eta}^{-1}\gamma^{\eta}) = s^{\eta}$. But by above, $s^{\eta} = s^{g \eta} = s^{\theta}$. Therefore, $g_{\eta}^{-1} \gamma^{\eta} $ and $g_{\theta}^{-1} \gamma^{\theta}$ both start at $e$ and have the same label. Therefore, they are the same geodesic. Hence, $g_{\theta}^{-1}\theta = g_{\eta}^{-1}\eta$ i.e. $\alpha(\theta) = \alpha(\eta)$.
	
	\vs 
	
	It follows that $\alpha$ is reduction to $=$ on $\partial \hat{\G}$. Indeed, the above shows that $\theta E \eta \implies \alpha(\theta) = \alpha(\eta)$. Conversely, if $\alpha(\theta) = \alpha(\eta)$, then $g_{\eta}^{-1} \eta = g_{\theta}^{-1} \theta $, so $\theta = g_{\theta}g_{\eta}^{-1} \eta $, and therefore $\theta E \eta$. 
	
	\vs 
	
	It remains to show that $\alpha$ is Borel. To show this, let us first  show that the set  $U := \{(\eta, s) \in Z \times (2^{\N})^{\N} : s = s^{\eta}\}$ is Borel. We have $s = s^{\eta}$ if and only if $(\eta, s \vert_n) \in S_n$ for each $n \in \N$, so $\{(\eta, s) \in Z \times (2^{\N})^{\N} : s = s^{\eta}\}$ is Borel by Claim \ref{4.13} (note that the map $(\eta, s) \mapsto (\eta, s\vert_n)$ is continuous, hence Borel, for each $n \in \N$). 
	
	\vs
	
	Now the Borelness of $U$ implies the Borelness of the graph of $\alpha$. Indeed, note that for $\eta \in Z$ and $\theta \in \partial \hat{\G}$, denoting $\text{lab} : Z \times C \to Z \times (2^{\N})^{\N}$ the continuous map $(\eta,\gamma) \mapsto (\eta, \text{lab}(\gamma))$, we have:
	
	\begin{align*}
		\theta = g_{\eta}^{-1} \eta &\iff \exists \gamma \in C : \gamma \in \text{CGR}(e, \theta) \text{ and } \text{lab}(\gamma) = s^{\eta} \\
		&\iff \exists \gamma \in C : (\theta, e, \G) \in R \text{ and } (\eta, \gamma) \in \text{lab}^{-1}(U) 
	\end{align*}
	
	Putting $T = \{(\eta, \theta, \gamma) \in Z \times \partial \hat{\G} \times C : (\theta, e, \gamma) \in R \text{ and } (\eta, \gamma) \in \text{lab}^{-1}(U)\}$, we have that $T$ is Borel because $R$ and $U$ are Borel (see Claim \ref{4.3} for the Borelness of $R$). By above, the graph of $\alpha$ is the projection $\text{proj}_{Z \times \partial \hat{\G}}(T)$ of $T$ onto the first two coordinates $(\eta, \theta)$. For each $(\eta, \theta) \in Z \times \partial \hat{\G}$, the section $T_{(\eta, \theta)} = \{\gamma \in C : (\eta, \theta, \gamma) \in T\} = \{\gamma \in C : \gamma \in \text{CGR}(e, \theta) \text{ and } \text{lab}(\gamma) = s^{\eta}\}$ is finite, being either a singleton or the empty set (because a geodesic ray is uniquely determined by its basepoint and label). Therefore, by the Lusin-Novikov theorem, we have that $\text{proj}_{Z \times \partial \hat{\G}}(T)$ is Borel. Thus, the graph of $\alpha$ is Borel, so $\alpha$ is Borel.
	
\end{proof}

\begin{lem}
	\label{4.16}
	$E$ is smooth on the saturation $[Z]_E = \{\eta \in \partial \hat{\G} : \exists \theta \in Z \text{ such that } \theta E \eta\}$.
\end{lem}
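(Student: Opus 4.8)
The plan is to deduce smoothness on the saturation directly from the smoothness of $E\vert_Z$ established in Lemma \ref{4.15}, using nothing more than the fact that $E$ is the orbit equivalence relation of the action of the \emph{countable} group $G$. This is an instance of the general principle that smoothness of a countable Borel equivalence relation passes to the Borel saturation of any set on which it holds. First I would record that $[Z]_E = \bigcup_{g \in G} gZ$ is Borel: since $G$ is countable and each $g \in G$ acts on $\partial\hat{\G}$ as a homeomorphism, hence as a Borel automorphism, the saturation is a countable union of Borel sets and is therefore Borel. In particular $E\vert_{[Z]_E}$ is again a countable Borel equivalence relation on a standard Borel space.

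Next I would fix an enumeration $G = \{g_0, g_1, \dots\}$ with $g_0 = e$. For $\eta \in [Z]_E$ there is, by definition of the saturation, some $g \in G$ with $g\eta \in Z$, so I can set $i(\eta)$ to be the least index $i$ with $g_i\eta \in Z$. The function $i$ is Borel, because for each fixed $i$ the set $\{\eta : g_i\eta \in Z\} = g_i^{-1}Z$ is Borel (here I use that $Z$ is Borel, which is already in force in Lemma \ref{4.15}). I then define $\tilde{\alpha} \colon [Z]_E \to \partial\hat{\G}$ by $\tilde{\alpha}(\eta) = \alpha(g_{i(\eta)}\eta)$, where $\alpha$ is the reduction from Lemma \ref{4.15}. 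On each of the countably many Borel pieces $\{\eta : i(\eta) = i\}$ this map equals $\eta \mapsto \alpha(g_i\eta)$, a composition of Borel maps, so $\tilde{\alpha}$ is Borel.

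Then I would verify that $\tilde{\alpha}$ reduces $E\vert_{[Z]_E}$ to equality on $\partial\hat{\G}$, which gives the desired smoothness. Suppose $\eta, \eta' \in [Z]_E$. If $\eta E \eta'$, then the points $g_{i(\eta)}\eta$ and $g_{i(\eta')}\eta'$ both lie in $Z$ and are $E$-equivalent to one another (both being $E$-equivalent to $\eta$); since $\alpha$ is a reduction on $Z$, they have equal $\alpha$-values, so $\tilde{\alpha}(\eta) = \tilde{\alpha}(\eta')$. Conversely, if $\tilde{\alpha}(\eta) = \tilde{\alpha}(\eta')$, then $g_{i(\eta)}\eta$ and $g_{i(\eta')}\eta'$ are points of $Z$ with equal $\alpha$-value, so by the reduction property on $Z$ they are $E$-equivalent, whence $\eta E \eta'$. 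This exhibits a Borel reduction of $E\vert_{[Z]_E}$ to $=$ on the standard Borel space $\partial\hat{\G}$, i.e. $E$ is smooth on $[Z]_E$.

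The argument is essentially formal, and I do not expect a genuine obstacle beyond bookkeeping. The only delicate points are the Borelness of the index function $i(\eta)$, which rests on the countability of $G$ together with the Borelness of $Z$, and the implicit consistency in choosing the translate that lands in $Z$ — this is harmless precisely because $\alpha$ is constant on the $E$-classes meeting $Z$, so any admissible choice of $g_i\eta \in Z$ yields the same value of $\tilde{\alpha}$.
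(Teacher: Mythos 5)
Your proof is correct and takes essentially the same route as the paper: the paper's proof is a one-liner deducing the claim from Lemma \ref{4.15} via the standard fact that smoothness of a countable Borel equivalence relation passes from a Borel set to its $E$-saturation, and your least-index selector construction is precisely the standard justification of that fact which the paper leaves implicit.
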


\begin{proof}
	By Lemma \ref{4.15}, $E$ is smooth on $Z$, yielding the claim. 
\end{proof}

\begin{defn}
	\label{4.17}
	Let $Y = \partial \hat{\G} \setminus [Z]_E$. For each $n \in \mathbb{N}$, define $H_n : \partial \hat{\G} \to 2^G$ by $H_n(\eta) = (g_{n}^{\eta})^{-1} T_n^{\eta}$.  Let $F_n$ be the equivalence relation on $\mathrm{im} H_n$ which is the restriction of the shift action of $G$ on $2^G$ to $\mathrm{im} H_n$.
\end{defn}

The following lemma is a generalization of \cite[Lemma 6.7]{MS20}.

\begin{lem}
	\label{4.18}
	There exists a constant $K$ such that for each $n \in \mathbb{N}$, each equivalence class of $F_n$ has size at most $K$. 
\end{lem}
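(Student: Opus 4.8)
The plan is to follow the proof of \cite[Lemma 6.7]{MS20}, replacing the local finiteness of the Cayley graph---which is unavailable for $\hat{\G}$---by the uniform level bound of Theorem \ref{3.1}. Fix $n$ and let $A = H_n(\eta) = (g_n^\eta)^{-1} T_n^\eta$ be a representative of an $F_n$-class. Every other member of this class has the form $gA$ for some $g \in G$ with $gA \in \mathrm{im}\, H_n$; since every set in $\mathrm{im}\, H_n$ contains $e$ (it is the image of the corresponding minimal element $g_n^{(\cdot)}$), both $A$ and $gA$ contain $e$, forcing $g^{-1} \in A$. Writing $a = g^{-1}$, every member of the class is therefore a \emph{re-centering} $a^{-1} A$ with $a \in A$, and it suffices to bound, by a constant $K$ independent of $n$ and $\eta$, the number of \emph{centers}, i.e. of elements $a \in A$ for which $a^{-1} A$ again belongs to $\mathrm{im}\, H_n$.

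The first step is to align all of $A$ along a single bundle. Because $T_n^\eta \subseteq \mathrm{Geo}_1(e, \eta) \subseteq \mathrm{Geo}(e, \eta)$, applying $(g_n^\eta)^{-1}$ and using the $G$-equivariance of the bundles (\cite[Lemma 5.10]{MS20}) shows that every element of $A$ lies on a CGR from the basepoint $(g_n^\eta)^{-1}$ to the boundary point $(g_n^\eta)^{-1} \eta$; by Theorem \ref{3.1}, at most $B$ elements of $A$ occur at each distance level from $(g_n^\eta)^{-1}$. Furthermore, each $a \in A$ carries a CGR toward $(g_n^\eta)^{-1}\eta$ whose length-$n$ label prefix is the single string $s_n^\eta$. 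Since two geodesic rays with identical label prefixes are exact left-translates of one another along that prefix, the initial length-$n$ segments of these rays are the translates $a \cdot w$ of one fixed geodesic word $w$; in particular any two elements of $A$ fellow-travel toward $(g_n^\eta)^{-1}\eta$.

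It remains to confine the centers to boundedly many levels, which is the heart of the matter. If $a$ is a center with $a^{-1}A = H_n(\theta)$, then $e$ (the image of $a$) is the $\le$-minimal, hence $d$-closest, element of $T_n^\theta$; by the label invariance $s_n^\theta = s_n^\eta$ proved in Lemma \ref{4.15} together with the finite symmetric difference of the $\mathrm{Geo}_1$-bundles from Theorem \ref{3.4}, the CGR witnessing $a$ as this minimal element shares the label prefix $s_n^\eta$ with the one witnessing $e \in T_n^\eta$. A center situated many levels ahead of $e$ toward $(g_n^\eta)^{-1}\eta$ would then, upon translating back, exhibit the string $s_n^\eta$ occurring infinitely often at points strictly closer to the basepoint than $g_n^\eta$, contradicting the minimality built into the choice of $g_n^\eta$ and $s_n^\eta$; slimness of triangles (Theorem \ref{3.0}) turns this into a bound on the number of occupied levels depending only on $\nu$. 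Multiplying this bound by the at-most-$B$ centers per level from Theorem \ref{3.1} produces the uniform constant $K$. The main obstacle is exactly this confinement step: in \cite{MS20} it rests on local finiteness of the Cayley graph, and here it must instead be routed through the uniform finiteness supplied by Theorem \ref{3.1} (ultimately \cite[Theorem 3.26]{Osin06}) and the finite symmetric difference of Theorem \ref{3.4}.
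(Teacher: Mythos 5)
Your overall skeleton matches the paper's: reduce membership in an $F_n$-class to left-translates $a^{-1}A$ with $a\in A$ (equivalently, bound the translating element $g$ with $H_n(\eta)=gH_n(\theta)$), place everything inside a single bundle $\text{Geo}((g_n^\eta)^{-1},\sigma)$ whose levels have size at most $B$ by Theorem \ref{3.1}, and then multiply a level count by a bounded number of levels. The counting part and the alignment part are fine. The problem is that the step you yourself flag as ``the heart of the matter'' --- confining the center to boundedly many levels --- is not actually proved, and the sketch you give for it does not work as stated. You invoke a contradiction with ``the minimality built into the choice of $g_n^\eta$ and $s_n^\eta$,'' claiming that a center far ahead of $e$ would make $s_n^\eta$ ``occur infinitely often at points strictly closer to the basepoint than $g_n^\eta$.'' But the minimality of $g_n^\eta$ is simply that it is the $\leq$-least element of $T_n^\eta$ (a statement about a single element, not about infinitely many occurrences), and the minimality of $s_n^\eta$ is in the order $<_n$ on labels, which plays no role here; moreover you never explain how a deep center produces an element of $T_n^\eta$ closer to $e$ than $g_n^\eta$. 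The detour through $s_n^\theta=s_n^\eta$ and Theorem \ref{3.4} is likewise not needed and does not supply the missing estimate.

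What the paper actually does at this point is a two-sided quantitative argument using \emph{both} minimality conditions. From $H_n(\eta)=gH_n(\theta)$ one gets a common boundary point $\sigma=(g_n^\eta)^{-1}\eta=g(g_n^\theta)^{-1}\theta$, and $e,g$ lie in both $\text{Geo}(g(g_n^\theta)^{-1},\sigma)$ and $\text{Geo}((g_n^\eta)^{-1},\sigma)$, say at levels $m_1,m_2$ (for $g,e$ from the first basepoint) and $m_3,m_4$ (for $g,e$ from the second). The $\leq$-minimality of $g_n^\theta$ in $T_n^\theta$ gives $m_2\geq m_1$, and the $\leq$-minimality of $g_n^\eta$ in $T_n^\eta$ gives $m_3\geq m_4$; combining these opposite inequalities with $\nu$-hyperbolicity and the triangle inequality yields $d(e,g)\leq 8\nu$, after which Theorem \ref{3.1} bounds the ball of radius $8\nu$ about $e$ in the bundle by $(20\nu+1)B$. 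Your sketch only gestures at one of the two directions (and even that one via an incorrect contradiction), so a center could a priori sit arbitrarily many levels \emph{behind} $e$ --- or ahead of it --- without contradicting anything you have established. To close the gap you need to extract both level inequalities from the two minimality statements and run the hyperbolicity estimate explicitly.
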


\begin{proof}
	
	Note that by Thereom \ref{3.1}, we have that each closed ball of radius $r$ in $\text{Geo}(x, \eta)$ has cardinality at most $(2(r+2\nu)+1)B$, where $B$ is the constant from Theorem \ref{3.1}. We will show that we can take $K =(20 \nu + 1)  B$.
	
	\vs
	
	Let $\eta, \theta \in \partial \hat{\G}$ and suppose that $H_n(\eta) = g H_n(\theta)$. By the proof of \cite[Lemma 6.7]{MS20} (which only relies on the hyperbolicity of the Cayley graph and local finiteness of geodesic ray bundles and so holds in our context when applied to $\hat{\G}$), we have $d(e,g) \leq 8\nu$. For completeness, let us reproduce this proof. 
	
	\vs 
	
	By defiinition, $T_n^{\eta}$ (resp. $T_n^{\theta}$) is an infinite subset of $\text{Geo}(e, \eta)$ (resp. $\text{Geo}(e, \theta)$). Since $\text{Geo}(e, \eta)$ is locally finite, this means that $T_n^{\eta}$ (resp. $T_n^{\theta}$) uniquely determines $\eta$ (resp. $\theta$). From $H_n(\eta) = g H_n(\theta)$, we have $(g_n^{\eta})^{-1} T_n^{\eta} = g(g_n^{\theta})^{-1} T_n^{\theta}$ and since $T_n^{\eta}$ and $T_n^{\theta}$ determine their boundary points, this implies that $ (g_n^{\eta})^{-1} \eta = g(g_n^{\theta})^{-1} \theta$. Let us denote by $\sigma$ the common boundary point $ (g_n^{\eta})^{-1} \eta = g(g_n^{\theta})^{-1} \theta$.

	\begin{figure}[H]
		\centering
		\includegraphics[width=0.4\linewidth]{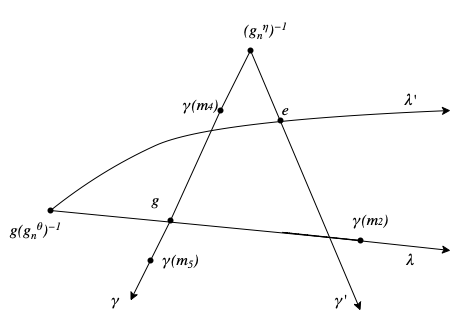}
		\caption{The geometry of the proof of Lemma \ref{4.18}.}
		\label{fig:lemma-418}
	\end{figure}
	
	We have that $g,e \in (g_n^{\eta})^{-1} T_n^{\eta} = g(g_n^{\theta})^{-1} T_n^{\theta} \subseteq \text{Geo}(g(g_n^{\theta})^{-1},\sigma)$, so there exists $\lambda \in \text{CGR}(g(g_n^{\theta})^{-1},\sigma)$ passing through $g$ and $\lambda' \in \text{CGR}(g(g_n^{\theta})^{-1},\sigma)$ passing through $e$. Write $g = \lambda(m_1)$ and $e = \lambda'(m_2)$ for some $m_1, m_2 \in \N$. Note that by $\nu$-hyperbolicity, we have $d(e, \lambda(m_2)) \leq 2 \nu$. Also, we have $m_2 \geq m_1$. Indeed, since $g_n^{\theta}g^{-1} \in g_n^{\theta} g^{-1}(g_n^{\eta})^{-1}T_n^{\eta} = T_n^{\theta}$, we have:

		$$m_2 = d(e,g(g_n^{\theta})^{-1}) = d(e, g_n^{\theta}g^{-1}) 
		\geq d(e, g_n^{\theta}) 
		= d(e,(g_n^{\theta})^{-1}) 
		= d(g, g(g_n^{\theta})^{-1}) 
		= m_1$$

	where $d(e, g_n^{\theta}g^{-1}) 
	\geq d(e, g_n^{\theta}) $ holds by $\leq$-minimality of $g_n^{\theta}$ in $ T_n^{\theta} $.
	\vs 
	
	Similarly, from $g,e \in (g_n^{\eta})^{-1} T_n^{\eta} $, we have $g,e \in \text{Geo}((g_n^{\eta})^{-1},\sigma)$, and so there exists $\gamma \in \text{CGR}((g_n^{\theta})^{-1},\sigma)$ passing through $g$ and $\gamma' \in \text{CGR}((g_n^{\theta})^{-1},\sigma)$ passing through $e$. Write $g = \gamma(m_3)$ and $e = \gamma'(m_4)$ for some $m_3, m_4 \in \N$ (see Figure 3). By $\nu$-hyperbolicity, we have $d(e, \gamma(m_4)) \leq 2 \nu$ and $m_4 \leq m_3$ because $g_n^{\eta} g \in g_n^{\eta}g(g_n^{\theta})^{-1}T_n^{\theta} = T_n^{\eta}$ and so by the $\leq$-minimality of $g_n^{\eta}$ in $T_n^{\eta}$, we have that: 
	
	$$
		m_3 = d((g_n^{\eta})^{-1},g) 
		= d(e, g_n^{\eta}g) 
		\geq d(e, g_n^{\eta}) 
		= d(e, (g_n^{\eta})^{-1}) 
		= m_4
	$$
	
	Let us now consider the sub-CGRs of $\lambda$ and $\gamma$ starting at $g$. Using  $\nu$-hyperbolicity, since $m_2 \geq m_1$, there exists $m_5 \geq m_3$ such that $d(\lambda(m_2), \gamma(m_5)) \leq 2 \nu$. Then by the triangle inequality and our above estimates, we have: 
	
	$$
		d(\gamma(m_4), \gamma(m_5)) \leq d(\gamma(m_4),e) + d(e, \lambda(m_2)) + d(\lambda(m_2), \gamma(m_5)) 
		\leq 6 \nu
	$$
	
	Therefore, 
	
$$
		d(e,g) = d(e, \gamma(m_3)) 
		\leq d(e, \gamma(m_4)) + d(\gamma(m_4), \gamma(m_3)) 
		\leq 2 \nu + d(\gamma(m_4),\gamma(m_5)) 
		\leq 8 \nu
$$
	
	where we have $d(\gamma(m_4), \gamma(m_3)) \leq d(\gamma(m_4),\gamma(m_5))$ because $m_5 \geq m_3 \geq m_4$. 
	
	\vs
	
	Thus,  $H_n(\eta) = g H_n(\theta)$ implies that $g $ is in the ball of radius $8 \nu$ about $e$ in $\text{Geo}((g_{\eta}^n)^{-1}, \sigma)$, which has cardinality at most $(2(8\nu + 2 \nu)+1)B = (20\nu + 1) B = K$. Thus, $F_n$-classes have cardinality at most $K$.

\end{proof}

The following remaining results have the same proof as in \cite{MS20}.

\begin{lem}
	\label{4.19}
	Let $n \in \mathbb{N}$. Then the map $H_n$ is Borel and so $\mathrm{im} H_n$ is analytic. 
\end{lem}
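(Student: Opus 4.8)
The plan is to prove $H_n$ Borel by checking it coordinatewise. Since $2^G$ carries the product topology over the countable set $G$, the map $H_n \colon \partial\hat{\G} \to 2^G$ is Borel if and only if for every $w \in G$ the set $\{\eta \in \partial\hat{\G} : w \in H_n(\eta)\}$ is Borel. As $H_n(\eta) = (g_n^\eta)^{-1} T_n^\eta$, membership $w \in H_n(\eta)$ is equivalent to $g_n^\eta w \in T_n^\eta$, so everything reduces to establishing Borelness of the two relations $\{(\eta, h) : h \in T_n^\eta\}$ and $\{(\eta, g) : g = g_n^\eta\}$ in $\partial\hat{\G} \times G$.

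For the first relation I would unwind the definition $T_n^\eta = \{h \in \text{Geo}_1(e,\eta) : (h, s_n^\eta) \in C^\eta\}$. By the definition of $C^\eta$, we have $h \in T_n^\eta$ iff $(\eta, h) \in A$ (Claim \ref{4.11}) and there is some $\gamma \in \text{CGR}(h,\eta)$ with $\text{lab}(\gamma)\vert_n = s_n^\eta$. Since $\text{lab}(\gamma)\vert_n$ depends only on the initial segment $(\gamma(0),\dots,\gamma(n))$ and is a fixed (hence Borel) function of it, this existential over infinite rays can be replaced by an existential over finite paths $(v_0,\dots,v_n) \in G^{n+1}$: I require $v_0 = h$, $(\eta, h, (v_0,\dots,v_n)) \in F$ (Claim \ref{4.4}), and $\text{lab}(v_0,\dots,v_n)\vert_n = s^n$ for the unique $s^n$ with $(\eta, s^n) \in S_n$ (Claim \ref{4.13}). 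Because $G$ is countable, the quantifier $\exists (v_0,\dots,v_n) \in G^{n+1}$ is a countable union and the quantifier $\exists s^n \in (2^n)^n$ is a finite union, so $\{(\eta,h) : h \in T_n^\eta\}$ is Borel. The second relation is then handled via the order $\leq$ on $G$: $g = g_n^\eta$ iff $g \in T_n^\eta$ and $h \notin T_n^\eta$ for every $h \leq g$ with $h \neq g$, and the latter is a countable intersection of Borel conditions, so $\{(\eta,g): g = g_n^\eta\}$ is Borel.

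Putting these together, for fixed $w$ the set $\{\eta : w \in H_n(\eta)\} = \{\eta : \exists g \in G \text{ with } g = g_n^\eta \text{ and } gw \in T_n^\eta\}$ is a countable union (over $g \in G$) of Borel sets, hence Borel; this gives Borelness of $H_n$. Finally, $\text{im} H_n$ is the projection to $2^G$ of the graph $\{(\eta, H_n(\eta)) : \eta \in \partial\hat{\G}\}$, which is Borel since $H_n$ is Borel, so $\text{im} H_n$ is analytic. The argument is essentially routine manipulation of the Borel hierarchy, and the only point requiring care is the translation of the infinite-ray condition $(h, s_n^\eta) \in C^\eta$ into a statement about finite labelled paths, together with the correct use of countability of $G$ to keep all quantifiers tame; in particular the coding of labels via $f$ and the order $<_n$ on $(2^n)^n$ must be tracked so that $\text{lab}(\cdot)\vert_n$ is genuinely a Borel function of the finite path.
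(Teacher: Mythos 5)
Your proposal is correct and follows essentially the same route as the paper: the paper's proof simply asserts that the graphs of $\eta \mapsto g_n^\eta$, $\eta \mapsto T_n^\eta$ and $\eta \mapsto H_n(\eta)$ are definable by formulas with countable quantifiers referring to the Borel sets $D$ and $S_n$, and your argument just carries this out explicitly (using $A$ and $F$ in place of $D$, which is defined from them). The extra care you take in reducing the condition $(h,s_n^\eta)\in C^\eta$ to finite labelled initial segments is exactly the content the paper leaves implicit.
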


\begin{proof}
	The sets $\{(\eta, g_n^{\eta}) \in \partial \hat{\G} \times G\}, \{(\eta, T_n^{\eta}) \in \partial \hat{\G} \times 2^{G}\}$ and $G_{H_n} = \{(\eta, H_n(\eta)) : \partial \hat{\G} \times 2^G\}$ are all definable using formulas with countable quantifiers and references to the Borel sets $D$ and $S_n$ (see Claims \ref{4.12} and \ref{4.13}), so these sets are all Borel. As $G_{H_n}$ is the graph of $H_n$, it is Borel, so $H_n$ is Borel and hence $\text{im} H_n$ is analytic. 
\end{proof}

Using \cite[Lemma 2.3]{MS20}, there exists a finite Borel equivalence relation $F_n'$ on $2^G$ with $F_n \subseteq F_n'$. Since $F_n'$ is finite, Borel, there exists a Borel reduction $f_n : 2^G \to 2^{\mathbb{N}}$ from $F_n'$ to $ E_0$ for each $n \in \mathbb{N}$, using which we define $f: \partial \hat{\G} \to (2^{\mathbb{N}})^{\mathbb{N}}$ by $f(\eta) = (f_n(H_n(\eta)))_n$. Put $E' = f^{-1}(E_1)$, i.e. $\theta E' \eta \iff f(\theta) E_1 f(\eta)$.

\begin{lem}
\label{4.20}
The equivalence relation $E'$ is a hyperfinite countable Borel equivalence relation. 
\end{lem}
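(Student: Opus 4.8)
The plan is to verify three properties of $E'$ in turn, namely that it is Borel, that it has countable classes, and finally that it is hyperfinite, the last following from a hypersmoothness argument once the first two are in place. Borelness is immediate: by Lemma \ref{4.19} each $H_n$ is Borel, each $f_n$ is Borel, so $f(\eta) = (f_n(H_n(\eta)))_n$ is a Borel map $\partial \hat{\G} \to (2^\N)^\N$; since $E_1$ is Borel, $E' = f^{-1}(E_1)$ is Borel. For the remaining two points I would write $E_1 = \bigcup_n E_1^n$ as the increasing union of the relations $E_1^n = \{(x,y) : x_m = y_m \text{ for all } m \geq n\}$, and set $E_n' = f^{-1}(E_1^n)$, so that $E' = \bigcup_n E_n'$ increasingly.

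Countability is the heart of the matter, and I would establish it by showing each $E_n'$ has countable classes. If $\theta \mathrel{E_n'} \eta$, then in particular $f_n(H_n(\theta)) = f_n(H_n(\eta))$; since $f_n$ reduces $F_n'$ to $E_0$ and equality refines $E_0$, this forces $(H_n(\theta), H_n(\eta)) \in F_n'$, so $H_n(\theta)$ lies in the class $[H_n(\eta)]_{F_n'}$, which is finite because $F_n'$ is a finite Borel equivalence relation. It then suffices to see that $H_n$ is countable-to-one. For this I would argue as in the proof of Lemma \ref{4.18}: for $\theta \in \partial \hat{\G}$ the set $T_n^\theta$ is an infinite subset of $\text{Geo}(e,\theta)$, so $c := H_n(\theta) = (g_n^\theta)^{-1} T_n^\theta$ is an infinite subset of $\text{Geo}((g_n^\theta)^{-1}, (g_n^\theta)^{-1}\theta)$; since geodesic ray bundles are locally finite by Theorem \ref{3.1}, such an infinite subset accumulates at the single boundary point to which the bundle converges, so $c$ alone determines the boundary point $\sigma = (g_n^\theta)^{-1}\theta$. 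Hence every $\theta \in H_n^{-1}(c)$ satisfies $\theta = g_n^\theta \, \sigma \in G \cdot \sigma$, a countable set, so $H_n^{-1}(c)$ is countable. Consequently $\{\theta : \theta \mathrel{E_n'} \eta\} \subseteq \bigcup_{c \in [H_n(\eta)]_{F_n'}} H_n^{-1}(c)$ is a finite union of countable sets; thus $E_n'$, and hence $E'$, has countable classes, and $E'$ is a countable Borel equivalence relation.

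Finally, for hyperfiniteness, each $E_1^n$ is smooth, being reduced to equality by the tail map $x \mapsto (x_m)_{m \geq n}$, so each $E_n' = f^{-1}(E_1^n)$ is smooth, reduced to equality by $\eta \mapsto (f_m(H_m(\eta)))_{m \geq n}$. A smooth countable Borel equivalence relation is hyperfinite, so each $E_n'$ is hyperfinite, and since $E' = \bigcup_n E_n'$ is an increasing union of hyperfinite Borel equivalence relations, $E'$ is hyperfinite by closure of hyperfiniteness under increasing unions (\cite{DJK94}). I expect the main obstacle to be the countability step, and within it the claim that $H_n$ is countable-to-one: this is precisely where local finiteness of the geodesic ray bundles (Theorem \ref{3.1}) is essential, guaranteeing that the infinite translated copy $c$ of $T_n^\theta$ pins down its limiting boundary point and thereby confines $\theta$ to a single countable $G$-orbit.
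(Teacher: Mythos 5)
Your proposal is correct and follows essentially the same route as the paper: Borelness from Lemma \ref{4.19}, countability via the decomposition $E' = \bigcup_n E_n'$ together with the facts that $f_n$ is finite-to-one (as $F_n'$ is finite) and $H_n$ is countable-to-one (since $H_n(\eta)$ determines $(g_n^\eta)^{-1}\eta$ by local finiteness of the bundles, exactly as in the proof of Lemma \ref{4.18}), and hyperfiniteness from hypersmoothness. The only cosmetic difference is that you invoke smoothness of each $E_n'$ plus closure of hyperfiniteness under increasing unions, where the paper cites the single statement that countable hypersmooth Borel equivalence relations are hyperfinite; these are interchangeable.
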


\begin{proof}
	
	Since $H_n$ is Borel, we have that $E'$ is Borel. We also have that $E'$ is hypersmooth by definition, and so it is hyperfinite by \cite[Theorem 8.1.5]{Gao}. We follow the same proof as the proof of \cite[Lemma 6.9]{MS20}, to show that $E'$ is countable.
	
	\vs
	
	For each $n \in \N$, define the relation $E_n'$ on $\partial \hat{\G}$ by $\eta E_n' \theta$ if $f_m(H_m(\eta)) = f_m(H_m(\theta))$ for all $m \geq n$. Each $E_n'$ is countable because if $\eta E_n' \theta$, then $f_n(H_n(\eta)) = f_n(H_n(\theta))$, and $f_n \circ H_n$ is countable-to-one since $H_n$ is countable-to-one because if $H_n(\eta) = H_n(\theta)$, then $\eta E \theta$ and $E$ is countable, and $f_n$ is finite-to-one since $F_n'$ is finite. Therefore, there are only countably many choices for $\eta$ such that $\eta E_n' \theta$ once $\theta$ is fixed. Thus, $E_n'$ is countable. Noting that $E' = \bigcup_{n \in \N} E_n'$, we obtain that $E'$ is countable.  
	
\end{proof}

\begin{lem}
	\label{4.21}
	$f$ is a homomorphism from $E \vert_Y$ to $E_1$.
\end{lem}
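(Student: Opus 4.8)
The plan is to fix $\eta,\theta \in Y$ with $\eta E \theta$, write $\theta = g\eta$ for some $g \in G$, and prove $f(\eta) E_1 f(\theta)$, i.e.\ that $f_n(H_n(\eta)) = f_n(H_n(\theta))$ for all but finitely many $n$. The whole point will be that, although $\text{Geo}_1(e,\theta)$ and $\text{Geo}_1(g,\theta)$ differ, they differ by a \emph{fixed} finite set (independent of $n$), and the defining property of $Y$ pushes the sets $T_n^\eta, T_n^\theta$ off to infinity, so that the bounded discrepancy between $gT_n^\eta$ and $T_n^\theta$ must eventually disappear.

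First I would record that the labels match: exactly as in the proof of Lemma \ref{4.15}, $s_n^\eta = s_n^{g\eta} = s_n^\theta$ for every $n$. Next I would compare $gT_n^\eta$ with $T_n^\theta$. Since the left $G$-action preserves labels of geodesics and $g\,\text{Geo}_1(e,\eta) = \text{Geo}_1(g,g\eta) = \text{Geo}_1(g,\theta)$ by \cite[Lemma 5.10]{MS20}, a direct check shows that an element of $gT_n^\eta$ which happens to lie in $\text{Geo}_1(e,\theta)$ lies in $T_n^\theta$, and an element of $T_n^\theta$ which happens to lie in $\text{Geo}_1(g,\theta)$ lies in $gT_n^\eta$; hence
\[
gT_n^\eta \,\triangle\, T_n^\theta \;\subseteq\; \text{Geo}_1(g,\theta)\,\triangle\,\text{Geo}_1(e,\theta).
\]
By Theorem \ref{3.4} the right-hand side is a fixed finite set $\Delta$ not depending on $n$; set $R = \max_{x \in \Delta} d(e,x)$.

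The key step is then to use $\eta,\theta \in Y$, where $k_n^\eta, k_n^\theta \to \infty$. As $\leq$ refines distance to $e$, the $\leq$-minimal elements $g_n^\eta, g_n^\theta$ are $d$-closest to $e$ in $T_n^\eta, T_n^\theta$, so every point of $T_n^\theta$ is at distance $\geq k_n^\theta$ from $e$ and every point of $gT_n^\eta$ is at distance $\geq k_n^\eta - d(e,g)$ from $e$. Hence for all sufficiently large $n$ every element of $gT_n^\eta \cup T_n^\theta$ lies outside the ball of radius $R$ about $e$, while $gT_n^\eta \triangle T_n^\theta \subseteq \Delta$ lies inside it; therefore $gT_n^\eta \triangle T_n^\theta = \emptyset$, i.e.\ $gT_n^\eta = T_n^\theta$ for all large $n$. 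I expect this escape-to-infinity step to be the main obstacle: it is precisely where the restriction to $Y$ and the \emph{uniform}-in-$n$ (rather than merely finite-for-each-$n$) symmetric difference bound furnished by Theorem \ref{3.4} enter, replacing the local finiteness used in \cite{MS20}.

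Finally I would convert this set equality into the homomorphism statement. For all large $n$,
\[
H_n(\theta) = (g_n^\theta)^{-1}T_n^\theta = (g_n^\theta)^{-1} g\, T_n^\eta = w_n H_n(\eta), \qquad w_n = (g_n^\theta)^{-1} g\, g_n^\eta,
\]
so $H_n(\eta)$ and $H_n(\theta)$ lie in a common orbit of the shift action, giving $H_n(\eta) F_n H_n(\theta)$ and hence $H_n(\eta) F_n' H_n(\theta)$. As $F_n'$ is smooth (being finite) we may arrange $f_n$ to be constant on its classes, so $f_n(H_n(\eta)) = f_n(H_n(\theta))$ for all large $n$, which is exactly $f(\eta) E_1 f(\theta)$. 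Together with Lemma \ref{4.20} this yields $E\vert_Y \subseteq E' = f^{-1}(E_1)$, the inclusion needed to transfer hyperfiniteness from $E'$ to $E\vert_Y$.
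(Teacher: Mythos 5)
Your proof is correct and follows essentially the same route as the paper's: both reduce the claim to showing $gT_n^\eta = T_n^\theta$ for all large $n$ by combining the label equality $s_n^\eta = s_n^\theta$ from the proof of Lemma \ref{4.15}, the finite symmetric difference of the $\mathrm{Geo}_1$ sets, and the escape to infinity of $T_n^\eta$, $T_n^\theta$ guaranteed by $\eta,\theta \in Y$, and then conclude $H_n(\eta)\, F_n\, H_n(\theta)$ and hence $f_n(H_n(\eta)) = f_n(H_n(\theta))$. Your two-sided symmetric-difference bound (and the explicit remark that $f_n$ may be taken constant on $F_n'$-classes) is a slightly more careful rendering of the paper's one-sided containment argument, but it is the same proof.
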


\begin{proof}
	
Suppose $\eta, \theta \in Y$ are $E$-related, as witnessed by $g \in G$ (so $g \eta = \theta$). By \cite[Theorem 5.9]{MS20} and \cite[Lemma 3.10]{MS20}, we have that $g \text{Geo}_1(e, \eta)$ and $\text{Geo}_1(e, \theta)$ differ by a finite set. By local finiteness of $\text{Geo}(e, \eta)$ and since $\eta, \theta \in Y$, we have that there exists $N \in \N$ such that $g T_n^{\eta} \subseteq \text{Geo}_1(e, \theta)$ for all $n \geq N$. By the proof of Lemma \ref{4.15}, we have $s_n^{\eta} = s_n^{\theta}$, which gives, together with $g T_n^{\eta} \subseteq \text{Geo}_1(e, \theta)$, that $gT_n^{\eta} = T_n^{\theta}$. This then yields $(g_n^{\theta})^{-1} g g_n^{\eta} H_n^{\eta} = H_n^{\theta}$ for all $n \geq N$. Thus, we have $H_n(\eta) F_n H_n(\theta)$ and so $H_n(\eta) F_n' H_n(\theta)$ for all $n \geq N$ since $F_n \subseteq F_n'$. Therefore, $f_n(H_n(\eta)) = f_n(H_n(\theta))$ for all $n \geq N$ and so $f(\eta) E_1 f(\theta)$. 
	
\end{proof}

Let us now establish Theorem \ref{A} on the hyperfiniteness of $E$, following the proof of \cite[Theorem A]{MS20}. 

\vs 

\textit{Proof of Theorem \ref{A}}.

\vs

Note that $E \vert_Y $ is a sub-relation of $ E'$. Indeed, if $\theta, \eta \in Y$ and $\theta E \eta$, then by Lemma \ref{4.21}, we have that $f(\theta) E_1 f(\eta)$, which implies $\theta E' \eta$. By Lemma \ref{4.20}, we have that $E'$ is hyperfinite, so $E \vert_Y$ is hyperfinite, since a sub-relation of a hyperfinite equivalence relation is hyperfinite. On $\partial \hat{\G} \setminus Y = [Z]_E$, $E$ is smooth by Lemma \ref{4.16}, and hence hyperfinite. Therefore, $E$ is hyperfinite on $\partial \hat{\G}$. 

\vs 

Recall that we worked with a fixed a finite generating set $X$ in this section. If we use a different finite generating set $X'$ for $G$, then the relative Cayley graph $\hat{\G}'$ corresponding to $X'$ is $G$-equivariantly quasi-isometric to $\hat{\G}$ (via the identity map on $G$), so $\partial \hat{\G}'$ is $G$-equivariantly homeomorphic to $\partial \hat{\G}$. It follows that the orbit equivalence relation of $G$ on $\partial \hat{\G}'$ is also hyperfinite. $\square$ 

\vs 

As a corollary, we obtain Corollary \ref{B}  on the hyperfiniteness of the action of $G$ on $\partial (G, \mathcal{P})$, where $\mathcal{P}$ is the collection of parabolic subgroups.

\vs 

\textit{Proof of Corollary \ref{B}}.

\vs

 By Theorem \ref{2.bb}, $\partial \hat{\G}$ embeds $G$-equivariantly and topologically into $\partial (G,\mathcal{P})$ with countable complement. Therefore, the orbit equivalence relation of $G$ on $\partial \hat{\G}$ is a subrelation of the orbit equivalence relation of $G$ on $\partial (G, \mathcal{P})$. Since the orbit equivalence relation of $G$ on $\partial \hat{\G}$ is hyperfinite (by Theorem \ref{A}) and since $\partial (G, \mathcal{P}) \setminus \partial \hat{\G}$ is countable, it follows that the orbit equivalence relation of $G$ on $\partial (G, \mathcal{P})$ is also hyperfinite. $\square$

	\bibliographystyle{plain} 
	\bibliography{refs2}

\begin{thebibliography}{10}

\bibitem{ADAMS1994765}
S.~Adams.
\newblock Boundary amenability for word hyperbolic groups and an application to
  smooth dynamics of simple groups.
\newblock {\em Topology}, 33(4):765--783, 1994.

\bibitem{Adams94}
S.~Adams, G.~Elliott, and T.~Giordano.
\newblock Amenable actions of groups.
\newblock {\em Trans. Amer. Math. Soc.}, 344(2):803--822, 1994.

\bibitem{ADR}
Claire Anantharaman-Delaroche and Jean Renault.
\newblock Amenable groupoids.
\newblock {\em Monographies de L’Enseignement Mathematique [Monographs of
  L’Enseignement Mathematique]}, 36, 2000.

\bibitem{Bow12}
B.~Bowditch.
\newblock Relatively hyperbolic groups.
\newblock {\em International Journal of Algebra and Computation},
  22(03):1250016, 2012.

\bibitem{connes_feldman_weiss_1981}
A.~Connes, J.~Feldman, and B.~Weiss.
\newblock An amenable equivalence relation is generated by a single
  transformation.
\newblock {\em Ergodic Theory and Dynamical Systems}, 1(4):431–450, 1981.

\bibitem{DG}
M.~Dadarlat and E.~Guentner.
\newblock Uniform embeddability of relatively hyperbolic groups.
\newblock {\em Journal f{\"u}r die reine und angewandte Mathematik},
  612(2007):1--15, 2007.

\bibitem{Dah03}
François Dahmani.
\newblock Les groupes relativement hyperboliques et leurs bords.
\newblock {\em Prépublication de l’Institut de Recherche Mathématique
  Avancée [Prepublication of the Institute of Advanced Mathematical
  Research]}, 2003.

\bibitem{DJK94}
R.~Dougherty, S.~Jackson, and A.~S. Kechris.
\newblock The structure of hyperfinite borel equivalence relations.
\newblock {\em Trans. Amer. Math. Soc.}, 341(1), 1994.

\bibitem{Gao}
Su~Gao.
\newblock {\em Invariant Descriptive Set Theory}, volume 293 of {\em Pure and
  Applied Mathematics}.
\newblock CRC Press, 2009.

\bibitem{Grom}
M.~Gromov.
\newblock Hyperbolic groups.
\newblock {\em Essays in group theory, Math. Sci. Res. Inst. Publ.}, 8:75--263,
  1987.

\bibitem{Stall}
Matthias Hamann, Florian Lehner, Babak Miraftab, and Tim Ruhmann.
\newblock A {S}tallings' type theorem for quasi-transitive graphs.
\newblock {\em J. Combin. Theory, Series B}, 157:40--69, 2022.

\bibitem{HSS19}
Jingyin Huang, Marcin Sabok, and Forte Shinko.
\newblock Hyperfiniteness of boundary actions of cubulated hyperbolic groups.
\newblock {\em Ergodic Theory and Dynamical Systems}, 40(9):2453--2466, Mar
  2019.

\bibitem{Kech95}
Alexander Kechris.
\newblock {\em Classical Descriptive Set Theory}, volume 156 of {\em Graduate
  Texts in Mathematics}.
\newblock Springer-Verlag, 1995.

\bibitem{Marquis_2018}
Timoth{\'{e}}e Marquis.
\newblock On geodesic ray bundles in buildings.
\newblock {\em Geometriae Dedicata}, 202(1):27--43, Oct 2018.

\bibitem{MS20}
Timoth{\'{e}}e Marquis and Marcin Sabok.
\newblock Hyperfiniteness of boundary actions of hyperbolic groups.
\newblock {\em Mathematische Annalen}, 377(3-4):1129--1153, Jun 2020.

\bibitem{Osin02}
Denis Osin.
\newblock Asymptotic dimension of relatively hyperbolic groups.
\newblock {\em International Mathematics Research Notices},
  35(2005):2143--2161, 2005.

\bibitem{Osin06}
Denis Osin.
\newblock {\em Relatively hyperbolic groups: Intrinsic geometry, algebraic
  properties, and algorithmic problems}, volume 179.
\newblock Memoirs American Mathematical Society, 2006.

\bibitem{Ozawa}
Narutaka Ozawa.
\newblock Boundary amenability of relatively hyperbolic groups.
\newblock {\em Topology and its Applications}, 153:2624--2630, 2006.

\bibitem{przytycki_sabok_2021}
Piotr Przytycki and Marcin Sabok.
\newblock Unicorn paths and hyperfiniteness for the mapping class group.
\newblock {\em Forum of Mathematics, Sigma}, 9:e36, 2021.

\bibitem{Touikan2018OnGR}
Nicholas W.~M. Touikan.
\newblock On geodesic ray bundles in hyperbolic groups.
\newblock {\em Proceedings of the American Mathematical Society}, 2018.

\bibitem{Vershik78}
A.~Vershik.
\newblock The action of {$PSL(2,\mathbb{Z})$} in $\mathbb{R}^1$ is
  approximable.
\newblock {\em Uspehi Mat. Nauk}, 199(1):209--210, 1978.

\bibitem{Zimmer78}
R.~Zimmer.
\newblock Amenable ergodic group actions and an application to poisson
  boundaries of random walks.
\newblock {\em J. Functional Analysis}, 27(3):350--372, 1978.

\end{thebibliography}

\end{document}